\documentclass[10pt]{article}
\usepackage{amsmath,amssymb,amsfonts,amsthm,graphicx,enumerate,textcomp,color}
\usepackage[abbrev]{amsrefs}
\usepackage[a4paper,top=1.in, bottom=1.5in, left=.8in, right=.8in, headsep=0.in, centering]{geometry}
\numberwithin{equation}{section}
\usepackage{esint}

\newcommand{\eps}{\varepsilon}

\newtheorem{theorem}{Theorem}[section]
\newtheorem{lemma}{Lemma}[section]

\newtheorem{cor}{Corollary}[section]
\newtheorem{definition}{Definition}[section]

\newcommand{\R}{{\mathbb R}}

\title{Homogenization for the $p$-Laplacian in a $d$-dimensional ball perforated along the unit sphere: the critical case $p=d$}
\author {Peter  V. Gordon
\thanks{Department of Mathematical Sciences,
Kent State University,
 Kent, OH 44242, USA.} %E-mail: {\tt gordon@math.kent.edu}}
\and 
Yuval Peres
\thanks{Beijing Institute of Mathematical Sciences and Applications (BIMSA), Huairou District, Beijing, China.
Corresponding author. E-mail: \texttt{yperes@gmail.com}}}

\date{\today}

\begin{document}

\maketitle

\begin{abstract}
We study a boundary value problem for the $p$-Laplacian in the perforated domain
$B(0,\rho)\setminus\Gamma\subset \mathbb{R}^d$, where $\rho>1$ and $\Gamma$ is the union of many small compact cavities placed near the unit sphere. The cavities are separated at scale $\eps$, asymptotically equidistributed on the sphere, and have cardinality of order $\eps^{1-d}$. The cavities have diameters of order $\alpha(\eps)\eps$, where $\alpha(\eps)\to0$, and their relative $p$-capacity is comparable to the relative p-capacity of a ball  of the same diameter.
The solution is required to equal $1$ on all cavities and $0$ on $\partial B(0,\rho)$.
 We focus on the critical case $p=d>1$.  

We identify the critical scale through the parameter
$\tau=\lim_{\eps\downarrow0}[\eps\log(1/\alpha(\eps))]^{-1}\in[0,\infty]$.
Thus,
$\alpha(\eps)=\exp[-(1+o(1))/(\tau\eps)]$  when $0<\tau<\infty$.  Away from the unit sphere, the solutions converge to $A_*U_\rho$, where $U_\rho(x)=\min\{1,1-\log |x|/\log\rho\}$ is the radial $d$-harmonic potential of the unit ball in $B(0,\rho)$.  The constant $A_*$ equals $0$ when $\tau=0$, equals $1$ when $\tau=\infty$, and is explicit for $0<\tau<\infty$. 
We construct an explicit ansatz that approximates
the solution for sufficiently small $\eps$ in both $L^{\infty}$ and
in terms of $d$-capacity.

 \end{abstract}

\medskip

\noindent {\bf Keywords:}  $p$-Laplacian, $p$-capacity, homogenization, $L^{\infty}$ estimates.

\medskip

\noindent {\bf 2020 Mathematics Subject Classification:}  35J92,  31C45,  35B27,  35B40,  35J20,   35J25.
\medskip

\section{Introduction}
Let $p,d>1$. 
We consider a boundary value problem for the $p$-Laplacian in a perforated domain  $B(0,\rho) \smallsetminus \Gamma \subset \mathbb{R}^d$.  Here    $B(0,\rho)$ is a ball of radius $\rho>1$ and $\Gamma$ is a union of  many tiny compact sets, approximately uniformly distributed near the unit sphere. In our joint work~\cite{GNP} with F. Nazarov, we analyzed the sub-critical case $p<d$. Here we focus   on the   critical case $p=d$.  More precisely, we are interested in the Perron solution of the following boundary value problem:
\begin{eqnarray}\label{eq:2}
\left\{
\begin{array}{lll}
\Delta_p u= {\rm div}( |\nabla u|^{p-2} \nabla u)=0 &\mbox{in} & B(0,\rho) \setminus  \Gamma,\\
u=1 & \mbox{on} & \Gamma,\\
u=  0 & \mbox{on} & \partial B(0,\rho).
\end{array}
\right.
\end{eqnarray}
Here $1<p<\infty$, and $\Gamma$ is constructed as follows.
 Given sufficiently small $\eps>0$, let $S=S(\eps)$ be a finite set of points on the unit sphere $\mathbb{S}^{d-1}\subset \mathbb{R}^d$ such that
 the Euclidean distance between any two points in $S$ is at least $\eps$  (points in $S$ will be referred to as {\bf anchors}).
 Moreover, assume that:
 \begin{description}
\item  [(${\bf  H_1}$)]  The anchors are asymptotically equidistributed, that is
\begin{eqnarray}\label{eq:4}
\eps^{d-1}
 \sum_{s\in S(\eps)} \delta_s \overset{\ast} \rightharpoonup \sigma \mu \: ~ \mbox{as} ~ \eps\to 0,
\end{eqnarray}
\end{description}
where $\mu$ is the uniform probability measure on the unit sphere $\mathbb{S}^{d-1}$
and $\sigma>0$. Examples of anchor sets on $\mathbb{S}^2$ are depicted in Figure \ref{fig:anchor}.

To each anchor $s\in S$ we assign   a compact subset of the closed unit ball $\bar{B}(0,1) \subset \R^d$ which is denoted by $K_s$.
Finally, we introduce a scaling factor $\alpha(\eps)$ that satisfies the following properties:  $0<\alpha=\alpha(\eps)\le \frac{1}{80}$  and  $\alpha(\eps) \to 0$ as $\eps \to 0$. Using these notations we  define:
\begin{eqnarray}\label{eq:1}
\Gamma=\Gamma_\eps:=\underset{s\in S} {\bigcup } (s+\alpha\eps K_s) \,.
\end{eqnarray}

One may expect that, under assumption (${\bf H_1}$), the solutions $u^\eps$
should approach a radial function away from the unit sphere as $\eps\to 0$,
provided the compact sets $K_s$ have uniform effective weights (the precise hypotheses are stated below).
While the structure of such a solution is very simple away from the unit sphere, its behavior near the unit sphere is still quite irregular.  
The main objective of this paper is to extract the asymptotic behavior of such solutions and to obtain their $L^{\infty}$ approximation when $\eps$ is sufficiently small.

\begin{figure}[!ht]
\centering \includegraphics[width=2in]{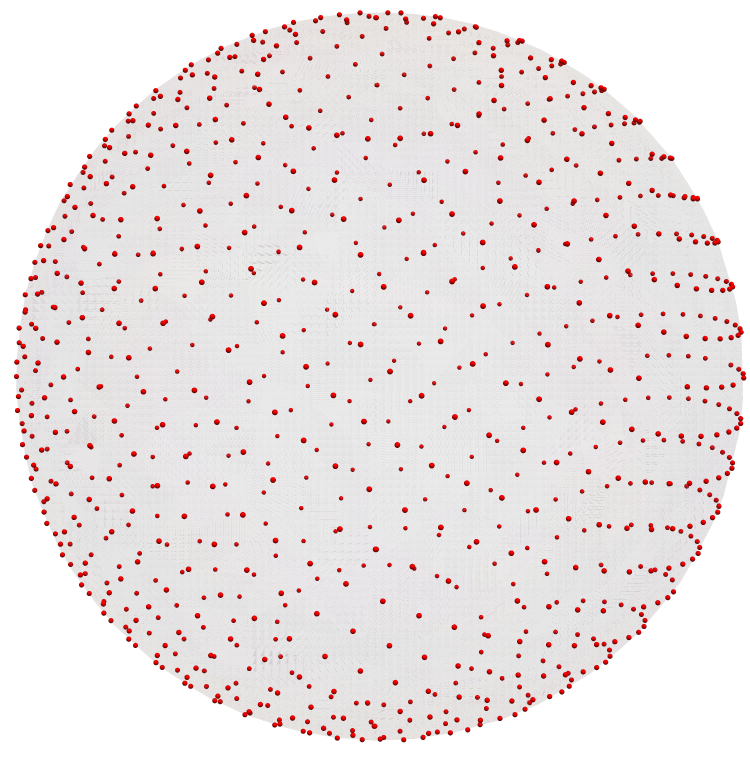} \qquad \centering \includegraphics[width=2in]{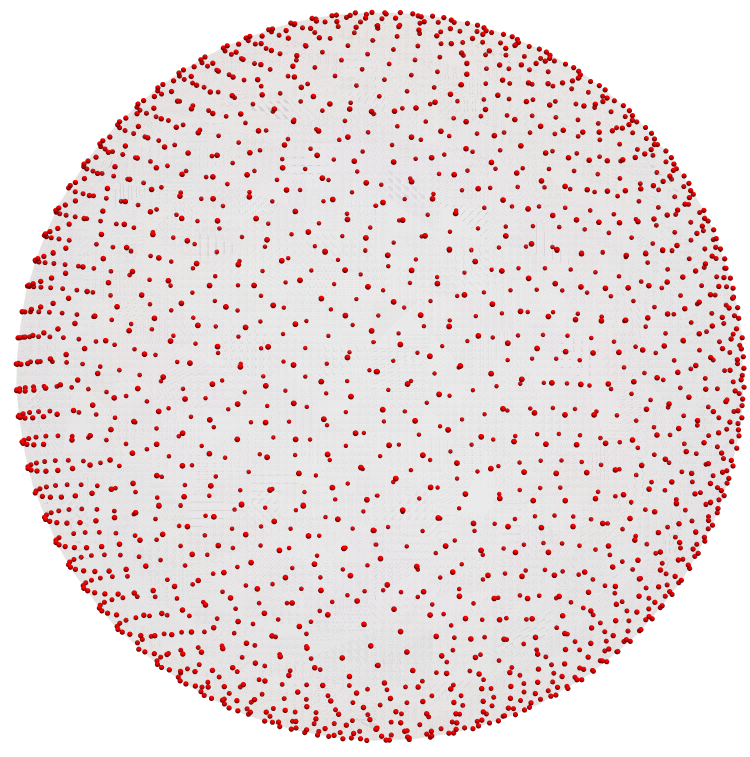} \qquad \includegraphics[width=2in]{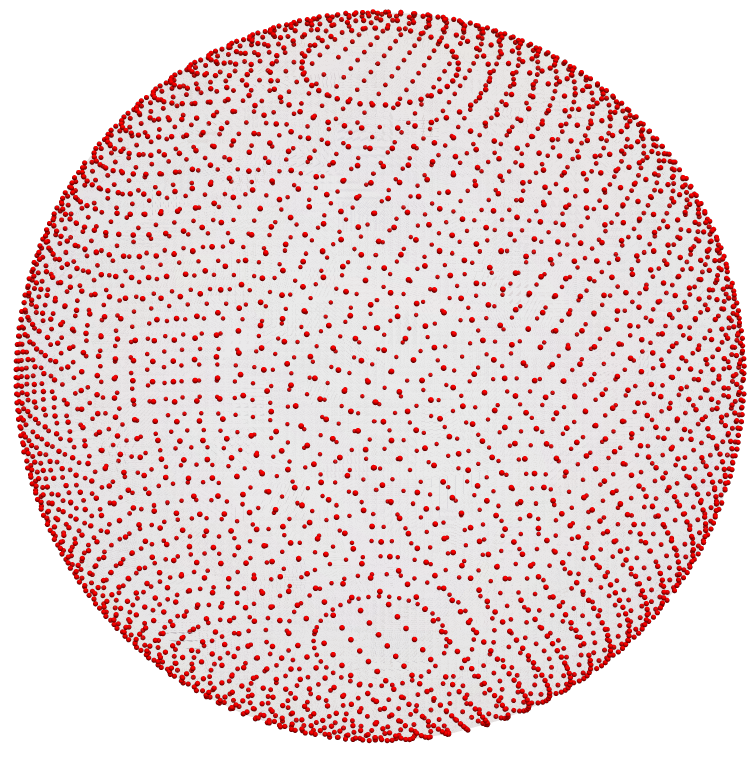}
\caption{Examples of anchor sets $S$ with $500$, $1000$, and $2000$ anchors on the unit sphere $\mathbb{S}^2$, respectively. The anchor locations are indicated by red dots.}
\label{fig:anchor}
\end{figure}

We next recall the definitions of $p$-energy and relative $p$-capacity.

\begin{definition}
Let $\Omega\subset\mathbb R^d$ be open, let $B\subset\Omega$ be Borel, and
let $v\in H_{\rm loc}^{1,p}(\Omega)$ satisfy $\nabla v\in L^p(B)$. The
$p$-energy of $v$ in $B$ is
\begin{eqnarray}
E_p(v,B):=\int_{B} |\nabla v |^p\,dx.
\end{eqnarray}
\end{definition}

\begin{definition} 
The $p$-capacity of a compact set $K\subset \Omega$ relative to a domain $\Omega$ is defined by
\begin{eqnarray}
{\rm cap}_p(K,\Omega):=\inf\{ E_p(\psi,\Omega): ~\psi\ge 1 ~\mbox{on} ~ K, ~ \psi\in C_0^{\infty} (\Omega) \}. 
\end{eqnarray}
\end{definition}
Equivalently, one may use quasicontinuous Sobolev representatives; see
\cite[pp.~27--28 and Theorems~4.4--4.5]{hkm}:
 \begin{eqnarray}
{\rm cap}_p(K,\Omega):=\inf\{ E_p(\psi,\Omega):
~\widetilde\psi\ge 1~p\mbox{-quasieverywhere on }K,
~\psi\in H_0^{1,p}(\Omega) \}.
 \end{eqnarray}
By \cite[Theorems~9.33 and~9.35]{hkm}, with the boundary value on $\Gamma$
understood $p$-quasieverywhere, the Perron solution of \eqref{eq:2} agrees in
$B(0,\rho)\setminus\Gamma$ with the $p$-equilibrium potential. Extended by
$1$ on $\Gamma$, it is the unique capacitary minimizer; uniqueness follows
from strict convexity. In particular,
\begin{eqnarray}
{\rm cap}_p (\Gamma,B(0,\rho))=E_p(u,B(0,\rho)).
\end{eqnarray} 
In this context, $u$ is called the $p$-equilibrium potential of $\Gamma$ in $B(0,\rho)$.

For all sufficiently small $\eps$, the set $\Gamma$ is contained in
$\bar B(0,1+\alpha\eps)\Subset B(0,\rho)$.  Hence monotonicity gives the uniform bound
\begin{eqnarray}
{\rm cap}_p (\Gamma, B(0,\rho))\le {\rm cap}_p (\bar B(0,1+\alpha\eps),B(0,\rho))= \left \{
\begin{array}{ll}
\displaystyle \omega_{d-1} \left |\frac{\gamma}{(1+\alpha\eps)^{-\gamma}-\rho^{-\gamma}}\right |^{p-1} & p\ne d \\
\\
\displaystyle \omega_{d-1} \frac{1}{(\log (\rho/(1+\alpha\eps)))^{d-1}} & p=d,
\end{array}
\right.
\end{eqnarray}
where
\begin{eqnarray}
\gamma:= \frac{d-p}{p-1}
\end{eqnarray}
and $\omega_{d-1}$ is the surface area of the unit sphere in $\mathbb{R}^d$.

The behavior of the solutions of \eqref{eq:2} depends    on the specific values of $p$ and $d$ and on the dependence of $\alpha$ on $\eps$. 
As far as the relation between $p$ and $d$ is concerned, one may distinguish three cases: the subcritical case $1<p<d$, critical case $p=d$, and supercritical case $d<p <\infty$.

The supercritical case $p>d$ is elementary. Assume in this paragraph that
each $K_s$ is nonempty. In this case the Sobolev representative of the
equilibrium potential is H\"older continuous by Morrey's inequality; see,
e.g., \cite[Section~5.6.2]{Evanspde}. Moreover, the following estimate holds.
\begin{eqnarray}
|u(x)-u(y)|\le C |x-y|^{1-d/p} (E_p(u,B(0,\rho)))^{1/p}\le C |x-y|^{1-d/p}
\end{eqnarray}
where $C$ depends only on $p,d$ and $\rho$.

Let
\[
h_\eps:=\sup_{x\in \mathbb S^{d-1}}\min_{s\in S(\eps)} |x-s|
\]
be the mesh size of the anchors.  If the anchors are chosen to be a maximal $\eps$-separated set, then $h_\eps\le \eps$; more generally, the equidistribution hypothesis ${\bf  H_1}$ implies $h_\eps\to0$.  Choose $x^*\in \mathbb S^{d-1}$ such that $u(x^*)=\min_{x\in \mathbb S^{d-1}}u(x)$, and choose an anchor $s^*$ with $|x^*-s^*|\le h_\eps$.  Since $p>d$, every point has positive relative $p$-capacity; hence a $p$-capacity-zero exceptional set is empty.  Thus the continuous Sobolev representative equals $1$ at every point of each nonempty $K_s$.  Choose $k_{s^*}\in K_{s^*}$ and set $y^*=s^*+\alpha\eps k_{s^*}\in\Gamma$.  Then $u(y^*)=1$ and
\begin{eqnarray}
|u(x^*)-u(y^*)|=|1-u(x^*)|\le C (h_\eps+\alpha\eps)^{1-d/p}.
\end{eqnarray}
  Consequently,
  \begin{eqnarray}
  \max_{x\in \mathbb{S}^{d-1} } u(x) - \min_{x\in \mathbb{S}^{d-1} } u(x) \le  C (h_\eps+\alpha\eps)^{1-d/p}
  \end{eqnarray}
  and hence, if $S(\eps)$ is maximal $\eps$-separated,
 \begin{eqnarray}
 u(x)=1+O( \eps^{1-d/p}) \quad \forall x\in \mathbb{S}^{d-1}.
 \end{eqnarray} 
  Thus $u\to1$ uniformly on $\mathbb S^{d-1}$.  The comparison principle in
the inner and outer components then shows that the solution of \eqref{eq:2}
approaches the solution with $\Gamma=\bar B(0,1)$,
  that is
  \begin{eqnarray}
  u^{\eps} (x) \to \left\{
  \begin{array}{ll}
  1 & 0\le |x| \le 1,\\
  \frac{\rho^{-\gamma}-|x|^{-\gamma}}{\rho^{-\gamma}-1} & 1<|x|\le \rho.
  \end{array}
  \right. \quad \mbox{as} \quad \eps\to 0.
  \end{eqnarray}
  The capacity convergence also follows directly from energy comparison.
The monotonicity bound above gives the required limsup. For the
liminf, restrict the energy to $B(0,\rho)\setminus\bar B(0,1)$ and use the
minimum boundary value of $u$ on $\mathbb S^{d-1}$ in the radial variational
problem.  Consequently,
  \begin{eqnarray}
  {\rm cap}_p (\Gamma, B(0,\rho)) \to  {\rm cap}_p (\bar B(0,1),B(0,\rho))= \omega_{d-1} \left (\frac{\gamma}{1-\rho^{-\gamma}}\right)^{p-1}\quad \mbox{as} \quad \eps\to 0.
  \end{eqnarray}
  Apart from nonemptiness, no uniform geometric or capacitary condition on $K_s$ is needed for this conclusion.

 The subcritical case $1<p<d$ is   more delicate.    This case was studied in our recent paper \cite{GNP}  under two additional assumptions:
\begin{description}
 \item [(${\bf  \tilde H_2}$)]   All the sets $K_s$ have the same $p$-capacity, i.e., there is some compact set $K \subseteq  \bar B(0,1)  \subset \R^d$
 such that ${\rm cap}_p(K_s)={\rm cap}_p(K)>0$ for all $\eps>0$ and $s \in S(\eps)$, where
\begin{eqnarray}
{\rm cap}_p(K ):=\inf\left\{ \int_{\mathbb{R}^d} \vert \nabla \psi \vert^p\,dx: ~\psi\in C_0^{\infty} (\mathbb{R}^d), ~ \psi\ge 1 ~\mbox{on} ~ K\right \},
\end{eqnarray}
\end{description}
and
\begin{description}
  \item [(${\bf \tilde  H_3}$)] $\lim_{\eps \to 0} \alpha(\eps)  \eps^{-1/\gamma} =\tau \in [0,\infty]$, where $\gamma:=\frac{d-p}{p-1}$
\end{description}
The assumption ${\bf  \tilde H_2}$ makes the cavities have the same effective capacitary weight.
The assumption ${\bf  \tilde H_3}$ balances the energy contribution from the bulk (the portion of $\bar B(0,\rho)$ away from $\mathbb{S}^{d-1}$) with the contribution from the cavities. If  $\tau=\infty$, then the solution tends to 1 on  $\mathbb{S}^{d-1}$. On the other hand, when $\tau=0$, the solution tends to 0. Thus, ${\bf \tilde  H_3}$
identifies the scaling of the parameter $\alpha$ in which such a transition  takes place.
It was shown in Theorem 1.2  of~\cite{GNP} that, as $\eps \to 0$, the solution of \eqref{eq:2} can be approximated in both $L^{\infty}$ and in the energy space by a relatively simple ansatz.

\bigskip

In view of the results discussed above, the only remaining case is the critical case $d=p$, which is the main subject of this paper. We consider this problem under assumption ${\bf  H_1}$ and two additional assumptions:
\begin{description}
 \item [(${\bf  H_2}$)]   The sets $K_s\subseteq \bar B(0,1)$ have uniformly positive $d$-capacity:
 ${\rm cap}_d(K_s,B(0,3))\ge m_0>0$ for all $\eps>0$ and $s \in S(\eps)$.
 \item [(${\bf  H_3}$)]
 $\displaystyle \lim_{\eps \to 0} \frac{1}{\eps\log(1/\alpha(\eps))}=\tau \in [0,\infty]$.
\end{description}
The assumption ${\bf H_2}$ prescribes uniform nondegeneracy of the cavities. In
the supercritical case no analogous capacitary assumption was needed,
whereas in the subcritical case all compact sets were required to have the
same $p$-capacity. In the critical case, we will show that uniform positivity
suffices and that the specific capacity values play no role in the leading
asymptotics due to conformal invariance of the $p$-energy for $p=d$.
The assumption ${\bf  H_3}$ identifies an exponential critical window.  If $0<\tau<\infty$, it is equivalent to
\begin{eqnarray}
\alpha(\eps)= \exp\left(- \frac{1+o(1)}{\tau \eps}\right).
\end{eqnarray}
The endpoint $\tau=0$ corresponds to smaller cavities, whereas $\tau=\infty$ corresponds to larger cavities (still subject to $\alpha(\eps)\to0$).

From this point onward we work in the critical case $p=d$ and use the shorthand
\begin{eqnarray}\label{def:energycritical}
E(v,D):=E_d(v,D)=\int_D |\nabla v|^d\,dx.
\end{eqnarray}

To formulate our main result, we introduce the $d$-equilibrium potential of the unit ball relative to $B(0,\rho)$, namely, the solution of
\begin{eqnarray}\label{eq:U}
\left\{
\begin{array}{lll}
\Delta_d U_R=0 &\mbox{in} & B(0,R) \setminus \bar{B}(0,1),\\
U_R=1 & \mbox{on} & \bar{B}(0,1),\\
U_R=0 & \mbox{on} & \partial B(0,R) \,,
\end{array}
\right.
\end{eqnarray}
given by
\begin{eqnarray}\label{eq:solU}
U_R(x):=\left\{
\begin{array}{ll}
1& 0\le |x| \le 1 \,,\\
1-\frac{\log |x|}{\log R} & 1<|x|<R \\
0 & |x|\ge R\,.
\end{array}
\right.
\end{eqnarray}
and the ansatz function
\begin{eqnarray}\label{eq:2da3}
u^{\eps}_{A}(x) := A U^{\eps}_{\rho}(x) +(1-A) \sum_{s\in S}U_{\frac{1}{10\alpha}}  \left(\frac{x-s}{\alpha \eps}\right)\,,
\end{eqnarray}
where $U^{\eps}_{\rho}$ is the $d$-equilibrium potential of $\bar B(0,1+\eps)$ relative to $B(0,\rho)$, given by
\begin{eqnarray}\label{eq:solU1}
U^{\eps}_\rho(x):=\left\{
\begin{array}{ll}
1& 0\le |x| \le 1+\eps \,,\\
1-\frac{\log (|x|/(1+\eps))}{\log (\rho/(1+\eps))} & 1+\eps<|x|<\rho\\
0 & |x|\ge \rho\,.
\end{array}
\right.
\end{eqnarray}
For variational and comparison arguments we shall also use the shape-adapted ansatz
\begin{eqnarray}\label{eq:2da3hat}
\widehat u^{\eps}_{A}(x) := A U^{\eps}_{\rho}(x) +(1-A) \sum_{s\in S}V^s_{\frac{1}{10\alpha}}  \left(\frac{x-s}{\alpha \eps}\right)\,.
\end{eqnarray}
Here each $V_R^s$ is the equilibrium potential of $K_s$, defined as
  the Perron solution of the boundary value problem 
\begin{eqnarray}\label{eq:V}
\left\{
\begin{array}{lll}
\Delta_p V_R^s=0 &\mbox{in} & B(0,R)\setminus  K_s,\\
V_R^s=1 & \mbox{on} & K_s,\\
V_R^s(x)= 0 & \mbox{for} & |x| \ge R.
\end{array}
\right.
\end{eqnarray}

Both ansatz functions, $u^{\eps}_{A}$ and $\widehat u^{\eps}_{A}$ have the exact cavity boundary data; the
shape-adapted ansatz follows the geometry of $K_s$ and has its capacitary
energy, whereas $u_A^\eps$ is the explicit radial approximation.  We shall
prove that the two ansatz functions are asymptotically identical in both
$H^{1,d}$ and $L^\infty$.
An example of an ansatz function is depicted in Figure \ref{fig:ansatz}.
\begin{figure}[ht]
\centering \includegraphics[width=\textwidth]{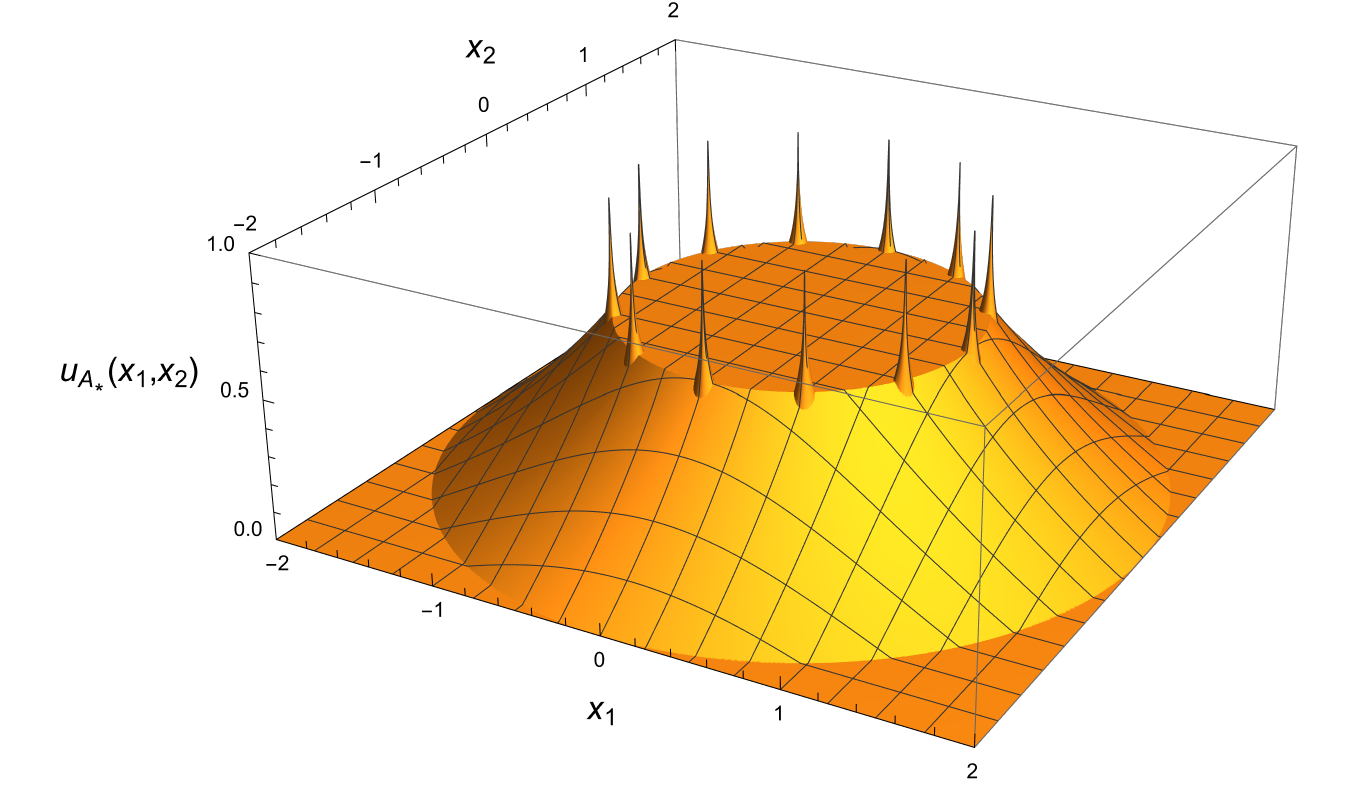}
\caption{The ansatz function $u^\eps_{A_*}$ determined by $12$ equally spaced anchors on the unit circle ($\sigma=2\pi, ~d=p=2$) in a ball of radius $\rho=2$. 
Here the Euclidean separation is $\eps=2\sin(\pi/12)\approx0.518$,
$\tau=1/3$, $\alpha=\exp[-1/(\tau\eps)]\approx3.04\cdot10^{-3}$, and
$A_*\approx0.592$.
 }
\label{fig:ansatz}
\end{figure}

Our main results are as follows.
\begin{theorem}\label{t:1}
Suppose that hypotheses $  {\bf  H_1},{\bf  H_2} , {\bf  H_3}$ hold.
  Then, as $ \eps\to 0$,
\begin{eqnarray} \label{eq:thm1A}
u^\eps(x)\to 
A_* U_\rho(x) 
\end{eqnarray}
uniformly on compact subsets of  $\bar B(0,\rho) \setminus \mathbb{S}^{d-1},$
where
\begin{eqnarray}\label{eq:A*}
A_*=A_*(\tau)=
\left\{
\begin{array}{ll}
\displaystyle \frac{\tau \sigma^\frac{1}{d-1}\log\rho}{1+\tau \sigma^\frac{1}{d-1}\log\rho}, & 0\le \tau<\infty,\\[1.2ex]
1, & \tau=\infty .
\end{array}
\right.
\end{eqnarray}
Furthermore,
\begin{eqnarray} \label{eq:thm1B}
{\rm cap}_d(\Gamma_\eps,B(0,\rho))\to \frac{\omega_{d-1}}{(\log \rho)^{d-1}} A_*^{d-1} .
\end{eqnarray}
\end{theorem}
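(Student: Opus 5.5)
The plan is a two-sided energy argument built on the ansatz $\widehat u^\eps_A$, followed by comparison to upgrade energy convergence to uniform convergence away from $\mathbb S^{d-1}$.

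\textbf{Heuristic energy balance.} Each cavity $s+\alpha\eps K_s$ is surrounded by a profile $V^s_{1/(10\alpha)}((x-s)/(\alpha\eps))$ supported in $B(s,\eps/10)$. These balls are pairwise disjoint by $\eps$-separation. Conformal invariance for $p=d$ gives each profile the energy ${\rm cap}_d(K_s,B(0,1/(10\alpha)))$. This capacity behaves like $\omega_{d-1}(\log(1/\alpha))^{1-d}(1+o(1))$ uniformly under ${\bf H_2}$, because a set trapped between a fixed set of capacity $\ge m_0$ and $\bar B(0,1)$ has annular capacity $\omega_{d-1}(\log R+O(1))^{1-d}$. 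Summing over the roughly $\sigma\eps^{1-d}$ anchors (by ${\bf H_1}$) and using ${\bf H_3}$, the cavity part contributes about
\[
(1-A)^d\,\sigma\,\omega_{d-1}\,\eps^{1-d}(\log(1/\alpha))^{1-d}\to(1-A)^d\,\omega_{d-1}(\sigma^{1/(d-1)}\tau)^{d-1}.
\]
The bulk term $AU^\eps_\rho$ contributes $A^d\omega_{d-1}(\log\rho)^{1-d}(1+o(1))$. These gradients essentially live on disjoint sets: $U^\eps_\rho$ is constant on $B(0,1+\eps)$, while the cavity profiles live in $B(s,\eps/10)\subset B(0,1+\eps/10)$. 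Hence
\[
E(\widehat u^\eps_A)\to\omega_{d-1}\Bigl[A^d(\log\rho)^{1-d}+(1-A)^d\kappa^{d-1}\Bigr],\qquad \kappa=\tau\sigma^{1/(d-1)}.
\]
Minimizing in $A$ gives $A/(1-A)=\kappa\log\rho$, which is exactly $A_*$. Substituting back, the minimum value is $\omega_{d-1}(\log\rho)^{1-d}A_*^{d-1}$, which is \eqref{eq:thm1B}. The endpoints $\tau=0,\infty$ are handled as limits of this computation.

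\textbf{Upper bound.} The function $\widehat u^\eps_{A_*}$ equals $1$ on $\Gamma$ and vanishes on $\partial B(0,\rho)$, so it is admissible. Therefore $\limsup_{\eps\to0}{\rm cap}_d(\Gamma,B(0,\rho))$ is at most the limiting value above. For $\tau=\infty$, take $A=1$ directly and use the monotonicity bound.

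\textbf{Lower bound.} This is the main obstacle. It requires showing that the true minimizer cannot do better than the ansatz. I would use a duality or calibration argument with the test field built from the ansatz, as follows.
\begin{itemize}
\item[(i)] For each $s$, test against $u$ on the annulus $B(s,\eps/10)\setminus(s+\alpha\eps K_s)$. Let $m_s$ be the mean or infimum of $u$ on $\partial B(s,\eps/10)$. Capacity then gives energy at least $(1-m_s)^d\,{\rm cap}_d$ per cavity.
\item[(ii)] On the exterior region $B(0,\rho)\setminus B(0,1+\eps)$ and the interior region $B(0,1-\eps)$, radial projection gives at least $m^d\omega_{d-1}(\log\rho)^{1-d}$. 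Here $m$ is the spherical average of $u$ near $|x|=1$, and Jensen's inequality is used.
\item[(iii)] Link $m_s$ to $m$. The key estimate is that the oscillation of $u$ on the shell $\{|\,|x|-1|\le\eps\}$ away from the cavity cores is $o(1)$, uniformly in $s$, or at least in an averaged $L^d$ sense. I would prove this with a Poincaré-type inequality on the shell cells $B(s,\eps)$ combined with the energy upper bound. The cell energy is $O(\eps^{d-1})$ on average, and since $p=d$ is scale invariant, the oscillation in a cell is controlled by the cell energy, which is small. This step is delicate because a single cell could concentrate energy, so I would argue on averages and use Hölder.
\end{itemize}
Combining (i)–(iii) and applying Jensen over the anchors yields
\[
\liminf_{\eps\to0}{\rm cap}_d\ge\min_A\omega_{d-1}\Bigl[A^d(\log\rho)^{1-d}+(1-A)^d\kappa^{d-1}\Bigr],
\]
which matches the upper bound.

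\textbf{Uniform convergence \eqref{eq:thm1A}.} Matching the upper and lower bounds forces the averaged trace $m$ to tend to $A_*$. Uniform convexity of the $d$-energy then shows that $u^\eps$ is close to $\widehat u^\eps_{A_*}$ in $H^{1,d}$. On compact subsets of $\{|x|>1\}$ and $\{|x|<1\}$, $u^\eps$ is $d$-harmonic. I would apply the comparison principle with the radial solutions whose boundary data on $|x|=1\pm\delta$ are $\max u$ and $\min u$ there, together with interior Harnack inequalities and Hölder estimates to control oscillation on those spheres. Together with $m\to A_*$, this gives $u^\eps\to A_*U_\rho$ uniformly.
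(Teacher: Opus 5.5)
\paragraph{Verdict.} Your upper bound, the energy bookkeeping for $\widehat u^\eps_A$, and the identification of $A_*$ and of the limiting capacity all agree with the paper. Your closing step is a workable substitute for the paper's route: energy squeeze, then Clarkson, then interior regularity of $d$-harmonic functions in the bulk, where the paper instead uses $L^\infty$ bounds on $|x|=1\pm\delta$ and comparison. The genuine gap is in step (iii) of the lower bound, which is exactly where the difficulty lies.

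\paragraph{The gap in step (iii).} Your cell estimate says that on an obstacle-free region of size $\eps$ the oscillation of $u$ is at most $C$ times the $1/d$ power of the local energy. This only compares $m_s$ with values of $u$ within distance $O(\eps)$ of $s$. It says nothing about how those local values relate to the spherical average $m$ used in (ii).

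Chaining cell to cell does not help. Using only the a priori energy bound, crossing a cap of radius $\delta$ takes about $\delta/\eps$ steps, each of typical size $\eps^{(d-1)/d}$. The total is of order $\delta\eps^{-1/d}$, which diverges.

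The estimate you actually state, that the oscillation of $u$ over the whole shell is $o(1)$, cannot come from the energy bound at all. A function of bounded $d$-energy can be close to $a(x/|x|)$ near the sphere with $a$ smooth and nonconstant. Global near-constancy is essentially the conclusion of the theorem and needs minimality. What your Jensen step really needs is weaker, namely $\frac1N\sum_s m_s\le m+o(1)$. Even this requires control of $u$ at every intermediate scale between $\eps$ and $1$, because ${\bf H_1}$ controls the anchors only at macroscopic scales.

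\paragraph{How the paper handles it.} This is where the paper spends its main effort.
\begin{enumerate}
\item Lemma~\ref{l:3}: in each cap $Q_\delta(y)$ whose cone carries no excess energy, all but an $O(\delta^{\beta/2})$ fraction of anchors are good, i.e.\ $E(u,\Lambda_\zeta(s))\le\zeta^{d-1-\beta}$ for every $\zeta\in[\eps,\delta]$.
\item Lemma~\ref{l:2}: for good anchors, the per-scale oscillations form a geometric series. This links $\partial B(s,\eps/10)$ to $|x|=1\pm\delta$ with error $C\delta^{(d-1-\beta)/d}$.
\item At the mesoscopic scale $\delta$, Lemma~\ref{lem:uniform-caps} supplies the anchor counts, giving the conewise bound of Lemma~\ref{l:4}.
\item Fubini averaging over cones (Lemma~\ref{l:5}) finishes.
\end{enumerate}

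\paragraph{An alternative fix that keeps your structure.} If you want to keep the global Jensen argument, compactness would close the gap.
\begin{enumerate}
\item Along a subsequence, the traces of $u^\eps$ on the spheres $|x|=1+c\eps$ converge in $L^1(\mathbb S^{d-1})$ to a fixed $a$, by compactness of the trace map on $H^{1,d}$.
\item Replacing each $m_s$ by a nearby cap average of the trace costs on average at most $\frac1N\sum_s e_s^{1/d}\le CN^{-1/d}$, by H\"older and bounded overlap of the cells.
\item Testing ${\bf H_1}$ against continuous approximants of $a$ then gives $\frac1N\sum_s m_s\to\int a\,d\mu$.
\end{enumerate}

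\paragraph{A correction to step (i).} $m_s$ must be the supremum of $u$ on $\partial B(s,\eps/10)$, not the mean or infimum. The truncation $\max\{0,(u-m_s)/(1-m_s)\}$ is admissible for $\kappa_s$ only if $u\le m_s$ there. Consequently the link in (iii) must be an $L^\infty$ statement. That comes from $d$-harmonicity (Lemma~\ref{l:1}), not from a Poincar\'e inequality, since $H^{1,d}\not\subset L^\infty$.
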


\begin{theorem} \label{main2}
  Suppose that hypotheses ${\bf  H_1},{\bf  H_2} , {\bf  H_3}$ hold. Then as $\eps\to 0$, we have
\begin{eqnarray}
\Vert u^\eps-\widehat u^\eps_{A_*}\Vert_{{H_0^{1,d}(B(0,\rho))}}\to 0,
\end{eqnarray}
and
\begin{eqnarray}
\Vert u^\eps-\widehat u^\eps_{A_*}\Vert_{L^{\infty}(B(0,\rho))}\to 0 \,.
\end{eqnarray}
Moreover,
\begin{eqnarray}
\Vert \widehat u^\eps_{A_*}-u^\eps_{A_*}\Vert_{{H_0^{1,d}(B(0,\rho))}}
+\Vert \widehat u^\eps_{A_*}-u^\eps_{A_*}\Vert_{L^\infty(B(0,\rho))}\to0.
\end{eqnarray}
Consequently the same two convergence statements hold with the explicit  ansatz $u^\eps_{A_*}$ in place of $\widehat u^\eps_{A_*}$.
\end{theorem}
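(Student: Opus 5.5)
We prove Theorem~\ref{main2} (and with it Theorem~\ref{t:1}) variationally, following the scheme of \cite{GNP} but using the logarithmic scaling of the critical case.

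\textbf{Step 1: energy of the shape-adapted ansatz.} We compute $E(\widehat u^\eps_A,B(0,\rho))$ as a function of $A\in[0,1]$. The supports of the profiles $V^s_{1/(10\alpha)}((x-s)/(\alpha\eps))$ are the balls $B(s,\eps/10)$, which are pairwise disjoint by $\eps$-separation. On each such ball $U^\eps_\rho\equiv1$, since the ball lies in $\bar B(0,1+\eps)$. Hence the gradients of the two parts have disjoint supports, and
\[
E(\widehat u^\eps_A)=A^d\,\frac{\omega_{d-1}}{(\log(\rho/(1+\eps)))^{d-1}}+(1-A)^d\sum_{s\in S}{\rm cap}_d\bigl(K_s,B(0,1/(10\alpha))\bigr),
\]
where we used conformal (scaling) invariance of the $d$-energy. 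The cavity term is controlled by comparing $K_s$ with a ball. For $K_s\subset\bar B(0,1)$, monotonicity gives
\[
{\rm cap}_d(K_s,B(0,R))\le\omega_{d-1}(\log R)^{1-d}.
\]
For the matching lower bound, write the capacity of $K_s$ in $B(0,R)$ as a series combination of the capacity in $B(0,3)$ and that of the annulus $B(0,R)\setminus B(0,3)$. Using ${\bf H_2}$, the $d$-capacity then satisfies
\[
{\rm cap}_d(K_s,B(0,R))^{-1/(d-1)}=\omega_{d-1}^{-1/(d-1)}\bigl(\log R+O(1)\bigr),
\]
with the $O(1)$ depending only on $m_0$. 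This is where the specific capacity values drop out. Combining with ${\bf H_1}$, $\#S\sim\sigma\eps^{1-d}$, and ${\bf H_3}$, $\log(1/\alpha)\sim 1/(\tau\eps)$, the cavity term tends to $(1-A)^d\omega_{d-1}\sigma\tau^{d-1}$. Minimizing $A^dL^{1-d}+(1-A)^d\sigma\tau^{d-1}$ over $A$ gives $A_*$ as in \eqref{eq:A*}, and the minimal value is the limit in \eqref{eq:thm1B}. The cases $\tau=0$ and $\tau=\infty$ are handled as endpoints.

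\textbf{Step 2: energy identification and $H^{1,d}$ convergence.} Since $\widehat u^\eps_{A_*}$ is admissible (it equals $1$ on $\Gamma$), we get $\limsup{\rm cap}_d(\Gamma)\le$ the target value. For the lower bound we split $B(0,\rho)$ into the outer annulus, the thin shell near the sphere, and the interior, and introduce $m_\eps$, the average of $u^\eps$ on the sphere $|x|=1+\eps$. The outer energy is then at least $m_\eps^d\omega_{d-1}(\log\rho)^{1-d}$ by a radial Jensen argument. Each cell around an anchor, where $u^\eps$ must rise from about $m_\eps$ to $1$, contributes at least $(1-m_\eps)^d{\rm cap}_d$ (up to $o(1)$ per cell) by a capacity-with-level argument. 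This gives the reverse inequality for the same two-parameter functional, so ${\rm cap}_d(\Gamma)$ converges and $m_\eps\to A_*$.

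Convergence in $H^{1,d}$ then follows from uniform convexity of $L^d$ (Clarkson's inequality). Apply it to the midpoint $(u^\eps+\widehat u^\eps_{A_*})/2$, which is admissible, so its energy is at least ${\rm cap}_d(\Gamma)$; both $E(u^\eps)$ and $E(\widehat u^\eps_{A_*})$ converge to the same limit.

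\textbf{Step 3: $L^\infty$ convergence.} This is the main obstacle. Energy closeness in dimension $d=p$ does not give sup bounds, since $H^{1,d}$ fails to embed in $L^\infty$. We would use the comparison principle with barriers instead. Take $\widehat u^\eps_{A_*\pm\delta}$ and modify it slightly: add or subtract $\delta$, and use the radial potential of $\bar B(0,1+\eps)$ with a shifted level, so that it becomes a super- or subsolution on the cells and in the bulk. Super/sub-$d$-harmonicity is checked piecewise: each piece is $d$-harmonic off the cavities, and at the junction $|x-s|=\eps/10$ the kinks have the right sign once $\delta\gg$ the error in Step 1. Boundary data are ordered because $u^\eps\to A_*U_\rho$ on $|x|=1+\eps$ and on the inner sphere. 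This in turn requires first upgrading the averaged statement $m_\eps\to A_*$ to uniform convergence on spheres at distance $\gg\eps$ from $\mathbb S^{d-1}$. For that we would use Harnack's inequality for $d$-harmonic functions on cells of size $\eps$, together with the energy convergence, to show that oscillation across cells vanishes.

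\textbf{Step 4: comparing the two ansatz functions, and Theorem~\ref{t:1}.} Finally, $\|\widehat u-u_{A_*}\|\to0$ reduces to a single-cell scaled estimate:
\[
\sum_s\|\nabla(V^s_R-U_R)\|^d_{L^d}\to0 \quad\text{and}\quad \sup|V^s_R-U_R|\to0 \quad\text{with } R=1/(10\alpha).
\]
Both profiles equal $1-\log|x|/\log R+O(1/\log R)$ for $|x|\ge3$, by comparison with logarithmic barriers, and both take values in $[0,1]$. Moreover $1/\log R\asymp\eps\tau$, so the $L^\infty$ error is $O(\eps)$ outside $B(0,3)$. Inside $B(0,3\alpha\eps)$ both profiles are within $O(1/\log R)$ of $1$. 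The energy difference is $O((\log R)^{-d})$ per cell times $\eps^{1-d}$ cells, which is $O(\eps)$. Theorem~\ref{t:1} then follows from the uniform convergence in Step 3.
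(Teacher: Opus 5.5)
There is a genuine gap, and it is in the upper half of Step~3.

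In $\{|x|<1+\eps\}$ your barrier is (a shift of) the constant $A$ off the cells, and $A+(1-A)V^s_{1/(10\alpha)}\bigl((x-s)/(\alpha\eps)\bigr)$ inside $B(s,\eps/10)$. The inner piece has a strictly negative outward normal derivative at $|x-s|=\eps/10$; for the radial profile it is $-10(1-A)/(\eps L_\eps)\approx-10\tau(1-A)$, with $L_\eps=\log(1/(10\alpha))$. The outer piece is flat. So the singular part of $-\Delta_d$ on the junction is negative. The ansatz is of max type there, hence a \emph{sub}solution, for every $A$ and every $\delta$.

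Adding constants does not change this. Tilting the exterior by a radial potential does not help either: its gradient is tangential to $\partial B(s,\eps/10)$ near the equator and points the wrong way on the hemisphere facing the origin. This is harmless for the lower bound, where the paper simply compares with the constant $A_*-C\delta^\theta$. But nothing of this form is an upper barrier.

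A supersolution must, near every cell, decrease away from that cell at a rate comparable to the cell profile. In other words, it must carry the combined flux of all the cavities through the shell. Bounding this collective effect is the missing idea. The paper supplies it with the separation theorem (Theorem~\ref{neckthm}). Let $w$ be the $d$-harmonic function equal to $1$ on $\bigcup_s\partial B(s,\alpha\eps)$ and $0$ on $\partial B(0,1+\delta)$. The maximum-principle inequalities $D\le F+c_1\tau_1\eps(1-F)$, $G\le(1-c_2\eps/\delta)F$ and $F\le(1-c_3)D+c_3G$ force $w\le C\tau_1\delta$ off the cells. Comparison then gives $u^\eps\le A_*+C\delta^\theta+w$, and one concludes cell by cell against $A_*+(1-A_*)V^s$.

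Ordering the boundary data on $|x|=1+\eps$ is also circular: uniform closeness of $u^\eps$ to $A_*$ at distance $\eps$ from the anchors is essentially what is being proved. On spheres $|x|=1\pm\delta$ with $\delta$ fixed, uniform control does follow once Step~2 is known. It comes from Poincar\'e plus local $L^\infty$--$L^d$ bounds for $d$-harmonic functions, with no Harnack chains over $\eps$-cells. But then your barrier must live in the whole shell containing all the cells, which is exactly where the flux problem sits.

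Second, the lower bound in Step~2 asserts the core difficulty rather than proving it. The capacity-with-level argument needs $u^\eps\lesssim m_\eps$ on the spheres $\partial B(s,\eps/10)$. This must hold pointwise, or at least in a Jensen-compatible average over entire cell boundaries, including the halves facing the interior. Yet $m_\eps$ is an average over a different surface. You must rule out, for instance, $u^\eps$ being larger inside the perforated sphere than outside it. Otherwise the lower bound you get is strictly below $\varphi_\tau(A_*)$ and the squeeze fails. The per-cell error must also be a factor $1-o(1)$, not an additive $o(1)$, since each cell carries energy $O(\eps^{d-1})$.

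The paper handles this with the good/bad-anchor estimates (Lemmas~\ref{l:1}--\ref{l:3}), the local cone bound of Lemma~\ref{l:4}, and the Fubini argument of Lemma~\ref{l:5}. An averaged trace/Poincar\'e argument at scale $\eps$ might also work, but it has to be carried out.

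Finally, in Step~1 the series law for condensers gives only ${\rm cap}_d(K_s,B(0,R))^{-1/(d-1)}\ge\omega_{d-1}^{-1/(d-1)}\log(R/3)$, i.e.\ the \emph{upper} bound on the capacity. The matching lower bound is where ${\bf H_2}$ enters. It needs the Harnack argument of Lemma~\ref{lem:uniform-a}, together with the comparison giving $V^s_R\ge1-C/\log R$ on $|x|=2$.
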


\smallskip

\medskip
\noindent\textbf{Related work.}
Standard references for the $p$-Laplacian, nonlinear potential theory, and
capacity are \cites{hkm,Lindq,Mazya}; see also the survey \cite{pLrev}.
For general background on homogenization in perforated domains, including
domains with fine-grained boundaries, we refer to \cite{MKhomo}.  The
classical capacitary mechanism by which a family of vanishing holes produces
a nontrivial term in the limiting problem was isolated by Cioranescu and
Murat \cite{CM97}.

The works most closely related to the geometry considered here concern
perforations concentrated near a hypersurface.  Karakhanyan and
Str\"omqvist \cite{KS14} studied a periodic sieve intersecting a hyperplane
and established homogenization for $p<1+d/2$.  They subsequently treated
strictly convex $C^2$ hypersurfaces for $p<1+d/4$ \cite{KS16}.
G\'{o}mez, P\'{e}rez, Podolskii, and Shaposhnikova \cite{GPPS} considered
variational inequalities for the $p$-Laplace operator in periodically
perforated media whose perforations are distributed along a
$(d-1)$-dimensional manifold. At the borderline exponent $p=d$, logarithmic
capacity leads to a qualitatively different scaling. A related critical
$n$-Laplace problem for very thin cavities with a nonlinear boundary
condition was studied by Podol'skii and Shaposhnikova \cite{PS15}. A
boundary-homogenization problem with rapidly alternating nonlinear boundary
conditions was studied by Podolskiy and Shaposhnikova \cite{PS19}; the
geometries and boundary conditions in both works differ from ours.

Our preceding paper \cite{GNP} treats the same spherical arrangement for
$1<p<d$, where the critical scale is algebraic, and proves uniform and
strong Sobolev approximation by an ansatz.  The present paper addresses the
borderline case $p=d$: the critical scale is exponential, and we describe
both the homogenized limit away from the sphere and the near-cavity profile.

\section{Preliminaries}

\subsection{Notation}

In this section we quote several standard results and definitions which will be used in the following sections.
In what follows $\Omega\subset \mathbb{R}^d$ is a bounded open set.

\begin{definition}[{\cite[Chapters~6--7]{hkm}}]
Given an open set $\Omega\subset \mathbb{R}^d$, a continuous function $w\in H_{loc}^{1,p}(\Omega)$ is called $p$-{\bf harmonic} if $\Delta_p w=0$  weakly, i.e., if
$$ \int_\Omega |\nabla w|^{p-2} \nabla w  \cdot \nabla \eta=0 \,, \quad \forall \eta \in C_0^\infty(\Omega)\,. $$
A function $\psi:\Omega \to (-\infty,\infty] $ is $p$-{\bf superharmonic} if it is lower semicontinuous, not identically $\infty$ on any connected component of $\Omega$,
and satisfies the following comparison inequality on compactly contained subdomains $D \subset \Omega$: If $\psi \ge w$ on $\partial D$ and $w\in C(\bar{D})$ is $p$-harmonic in $D$, then $ \psi \ge w$ in $D$.
\end{definition}
\begin{lemma}[Comparison principle, {\cite[Proposition~7.6]{hkm}}]\label{l:comp}
Let $v,w$  be bounded $p$-harmonic
  functions in an open set $\Omega \subset \R^d$.
 If
 $$\limsup_{x \to y} v(x) \le \liminf_{x \to y} w(x)$$
 for all $y \in \partial \Omega$
  then $v \le w$  in $\Omega$.
  \end{lemma}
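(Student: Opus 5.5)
The statement to be proved is the comparison principle for bounded $p$-harmonic functions, Lemma~\ref{l:comp}. I would argue by contradiction, using a strict perturbation and the weak formulation. Fix $\delta>0$ and consider the open set
\[
D_\delta:=\{x\in\Omega:\ v(x)>w(x)+\delta\}.
\]
Since $v$ and $w$ are continuous, $D_\delta$ is open. I would first show that $D_\delta\Subset\Omega$, i.e.\ its closure stays away from $\partial\Omega$. Suppose points $x_k\in D_\delta$ converge to some $y\in\partial\Omega$. Then
\[
\limsup_{x\to y}v(x)\ \ge\ \liminf_{x\to y}w(x)+\delta,
\]
which contradicts the hypothesis. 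Here boundedness of $v,w$ guarantees that both limits are finite. If $\Omega$ is unbounded, the point at infinity is included among the boundary points, which is the standard convention in \cite{hkm}. Hence the truncation
\[
\eta:=(v-w-\delta)^+
\]
vanishes in a neighbourhood of $\partial\Omega$ and lies in $H^{1,p}_0(\Omega)$ with compact support. It is therefore an admissible test function, after approximation by $C_0^\infty$ functions, in the weak equations for both $v$ and $w$.

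Next, I would subtract the two weak equations tested against $\eta$. This gives
\[
\int_{D_\delta}\bigl(|\nabla v|^{p-2}\nabla v-|\nabla w|^{p-2}\nabla w\bigr)\cdot(\nabla v-\nabla w)\,dx=0.
\]
The vector field $a(\xi)=|\xi|^{p-2}\xi$ is strictly monotone: $(a(\xi)-a(\zeta))\cdot(\xi-\zeta)>0$ whenever $\xi\neq\zeta$. Strict monotonicity follows from the strict convexity of $\xi\mapsto|\xi|^p/p$. Consequently $\nabla v=\nabla w$ a.e.\ on $D_\delta$, so $\nabla\eta=0$ a.e.\ in $\Omega$. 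Since $\eta\in H^{1,p}_0$ has compact support and vanishing gradient, $\eta\equiv0$ on each component. This follows, for example, from the Poincaré inequality on a bounded neighbourhood of its support. So $D_\delta=\emptyset$, i.e.\ $v\le w+\delta$. Letting $\delta\downarrow0$ gives $v\le w$.

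The main obstacle is the admissibility of the test function. One must check that $\eta$ belongs to the Sobolev space and has compact support, given only the pointwise boundary inequality and local Sobolev regularity of $v$ and $w$. The $\delta$-shift is exactly what produces the positive distance from $\partial\Omega$. Local $H^{1,p}$ regularity on a neighbourhood of $\overline{D_\delta}$ then suffices, because the supports are compact. Alternatively, one could simply cite \cite[Proposition~7.6]{hkm}, as the paper does.
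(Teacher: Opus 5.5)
Your argument is correct, but it is not what the paper does. The paper gives no proof of Lemma~\ref{l:comp}; it simply imports the result from \cite{hkm}. There, comparison is established for the larger class of semicontinuous, not necessarily Sobolev, $p$-superharmonic and $p$-subharmonic functions, using their defining comparison property on compactly contained subdomains.

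You instead give the standard self-contained energy proof for $p$-harmonic functions. The $\delta$-shift makes $\{v>w+\delta\}$ relatively compact in $\Omega$, so $(v-w-\delta)^+$ is a compactly supported $H^{1,p}_0$ test function. Strict monotonicity of $\xi\mapsto|\xi|^{p-2}\xi$ then forces its gradient to vanish, and Poincar\'e (or zero extension to $\R^d$) forces the function itself to vanish.

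Your route relies essentially on the continuity and $H^{1,p}_{\rm loc}$-regularity of $v$ and $w$. So it does not reach merely semicontinuous super/subsolutions, which the citation covers. However, the paper only ever applies this lemma to $p$-harmonic functions, so nothing is lost and the lemma becomes self-contained.

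Your caveat about unbounded $\Omega$ is not a mere convention but necessary. For $p<d$, the functions $v=1-|x|^{(p-d)/(p-1)}$ and $w\equiv0$ on $\R^d\setminus\bar B(0,1)$ satisfy the boundary hypothesis on the unit sphere, yet $v>w$ throughout. All applications of the lemma in the paper are on bounded sets, so this causes no trouble there.
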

Next we give the definition of {\bf Perron solutions} from \cite[Chapter~9]{hkm}.
\begin{definition} \label{def:upper}
 Given a domain $\Omega \subset \mathbb{R}^d$ and boundary values $f: M \to \mathbb{R}$, where $\partial \Omega \subset M \subset \Omega^c$,
 the {\em upper class} $U_f^\Omega$ of $f$ consists of $p$-superharmonic functions $\psi:\Omega \to (-\infty,\infty] $ that are bounded below  and satisfy
\begin{eqnarray}\label{upperclass}
\liminf_{x \to y} \psi(x) \ge f(y), \quad \forall y \in \partial \Omega.
\end{eqnarray}
The {\em upper Perron solution} $$h=\bar{H}_f^\Omega: \Omega \cup M \to [-\infty,\infty]$$ of  the boundary value problem
\begin{eqnarray}\label{eq:perron1}
\left\{
\begin{array}{lll}
\Delta_p h=0 &\mbox{in} & \Omega  \, ,\\
h(y)=f(y) & \mbox{for} & y \in M  \\
\end{array}
\right.
\end{eqnarray}
is defined in $\Omega$ by
\begin{eqnarray}\label{upperperron}
\bar{H}_f^\Omega(x):=\inf \{\psi(x) \, :\, \psi \in U_f^{\Omega} \}
\end{eqnarray}
and extended to agree with $f$ in $M$. The {\em lower Perron solution} of \eqref{eq:perron1} is defined by $\underline{H}_f^\Omega:= -\bar{H}_{-f}^\Omega$.

We say that $f$ is {\em resolutive} in $\Omega$ if the upper and lower Perron solutions of \eqref{eq:perron1} coincide; in this case we   refer to both of these simply as the  Perron solution.
\end{definition}

We shall repeatedly use the following variational formulation.  Boundary
conditions on an arbitrary compact set are understood for quasicontinuous
representatives and hold $p$-quasieverywhere.
\begin{lemma}[Capacitary variational principle, {\cite[Theorems~9.33 and~9.35]{hkm}}]\label{lem:variational}
Let $K\Subset\Omega$ be compact and choose $\phi\in C_0^\infty(\Omega)$ with
$\phi=1$ in a neighborhood of $K$.  Set
\begin{eqnarray}\label{eq:admissible}
{\mathcal A}(K,\Omega,\phi)
:=\{v\in H_0^{1,p}(\Omega):v-\phi\in H_0^{1,p}(\Omega\setminus K)\}.
\end{eqnarray}
Then the $p$-equilibrium potential $v_K$ is the unique minimizer of
$E_p(\,\cdot\,,\Omega)$ over this convex class, and
\begin{eqnarray}\label{eq:capmin}
E_p(v_K,\Omega)={\rm cap}_p(K,\Omega).
\end{eqnarray}
It has a quasicontinuous representative equal to $1$ $p$-quasieverywhere on
$K$, vanishes on $\partial\Omega$ in the Sobolev trace sense, and is
$p$-harmonic in $\Omega\setminus K$.  Weak comparison remains valid when the
boundary inequalities on $K$ are imposed $p$-quasieverywhere.
\end{lemma}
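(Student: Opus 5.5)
The plan is to prove this by the direct method of the calculus of variations, and then identify the minimizer with the capacitary potential.

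\emph{Existence and uniqueness.} The class ${\mathcal A}(K,\Omega,\phi)=\phi+H_0^{1,p}(\Omega\setminus K)$ is a nonempty closed affine subspace of $H_0^{1,p}(\Omega)$, hence convex and weakly closed. By the Poincar\'e inequality on the bounded set $\Omega$, $E_p(\cdot,\Omega)^{1/p}$ is an equivalent norm on $H_0^{1,p}(\Omega)$, so $E_p$ is coercive. It is also convex and continuous, hence weakly lower semicontinuous. Reflexivity then gives a minimizer $v_K$. Uniqueness follows from strict convexity of $\xi\mapsto|\xi|^p$: if $v_1,v_2$ are two minimizers, then $(v_1+v_2)/2$ is admissible and has strictly smaller energy unless $\nabla v_1=\nabla v_2$ a.e., which forces $v_1=v_2$ in $H_0^{1,p}$. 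Replacing $\phi$ by another admissible cutoff $\phi'$ changes nothing, since $\phi-\phi'\in C_0^\infty(\Omega\setminus K)$.

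\emph{Identification with the capacity.} For the bound ${\rm cap}_p(K,\Omega)\le E_p(v_K,\Omega)$:
\begin{itemize}
\item Approximate $v_K-\phi$ in $H^{1,p}$ by $\eta_j\in C_0^\infty(\Omega\setminus K)$.
\item Then $\phi+\eta_j\in C_0^\infty(\Omega)$ equals $1$ near $K$, so it is a test function for the capacity.
\item Its energy converges to $E_p(v_K,\Omega)$.
\end{itemize}
For the reverse bound $E_p(v_K,\Omega)\le{\rm cap}_p(K,\Omega)$:
\begin{itemize}
\item Take $\psi\in C_0^\infty(\Omega)$ with $\psi\ge1$ on $K$, and fix $\delta\in(0,1)$.
\item The function $\psi_\delta=\min\{\psi/(1-\delta),1\}$ is Lipschitz, compactly supported, and equals $1$ on a neighborhood of $K$. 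Hence $\psi_\delta-\phi\in H_0^{1,p}(\Omega\setminus K)$, so $\psi_\delta$ is admissible.
\item Therefore $E_p(v_K)\le E_p(\psi_\delta)\le(1-\delta)^{-p}E_p(\psi)$.
\item Let $\delta\to0$ and take the infimum over $\psi$.
\end{itemize}

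\emph{Properties of $v_K$.}
\begin{itemize}
\item \emph{$p$-harmonicity.} Varying $v_K+t\eta$ with $\eta\in C_0^\infty(\Omega\setminus K)$ gives the weak Euler--Lagrange equation there. The continuous representative then comes from standard regularity theory.
\item \emph{Vanishing on $\partial\Omega$.} This holds because $v_K\in H_0^{1,p}(\Omega)$.
\item \emph{Boundary value $1$ on $K$.} Use the characterization of $H_0^{1,p}(\Omega\setminus K)$ (Kilpel\"ainen's refinement of Bagby's theorem, \cite[Theorem~4.5]{hkm}). A function in $H_0^{1,p}(\Omega)$ belongs to $H_0^{1,p}(\Omega\setminus K)$ iff its quasicontinuous representative vanishes $p$-q.e. on $K$. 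Applied to $v_K-\phi$, this gives $\tilde v_K=1$ q.e. on $K$. The same theorem shows the quasicontinuous capacity definition agrees with the smooth one.
\item \emph{Agreement with the Perron solution.} This is the cited \cite[Theorem~9.33, 9.35]{hkm}: the upper and lower Perron solutions are squeezed against $v_K$ using the fact that $K$ is compact and the data are resolutive.
\end{itemize}

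\emph{Weak comparison.} Suppose $w$ is $p$-superharmonic (a supersolution) with $w\ge1$ q.e. on $K$ and $w\ge0$ on $\partial\Omega$ in the Sobolev sense. Then $(v_K-w)_+\in H_0^{1,p}(\Omega\setminus K)$, again by the q.e. characterization. Testing the two equations with it and using strict monotonicity of $\xi\mapsto|\xi|^{p-2}\xi$ gives $(v_K-w)_+=0$.

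\emph{Main obstacle.} The only nontrivial input is this fine-potential-theoretic description of $H_0^{1,p}(\Omega\setminus K)$ through quasicontinuous representatives. I would not reprove it but quote \cite{hkm}; everything else is routine convex analysis.
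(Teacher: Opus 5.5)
Your proposal is correct and follows essentially the same route as the paper. Existence comes from the direct method, uniqueness from strict convexity of $\xi\mapsto|\xi|^p$, and the identification with the Perron solution is quoted from \cite[Theorems~9.33 and~9.35]{hkm}. Weak comparison uses the positive part of the difference as a test function, justified by the quasi-everywhere characterization of $H_0^{1,p}(\Omega\setminus K)$ in \cite[Theorems~4.4--4.5]{hkm}, together with strict monotonicity of $\xi\mapsto|\xi|^{p-2}\xi$.

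You supply more detail than the paper does. In particular, you give the approximation/truncation argument for $E_p(v_K,\Omega)={\rm cap}_p(K,\Omega)$ and derive $\tilde v_K=1$ quasieverywhere on $K$; the paper leaves both to the cited results.
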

\begin{proof}
The cited theorems identify the Perron solution with the capacitary
potential. Existence follows from the direct method, and uniqueness follows
from the strict convexity of $\xi\mapsto|\xi|^p$. For weak comparison,
Theorems~4.4--4.5 of \cite{hkm} justify using the positive part of the
difference as a test function; the conclusion then follows from the strict
monotonicity of $\xi\mapsto|\xi|^{p-2}\xi$.
\end{proof}

In particular, $u^\eps$ and $V_R^s$, extended by $1$ on the corresponding
compact sets and by $0$ outside their outer domains, are Sobolev energy
minimizers.  This convention avoids any regularity assumption on
$\partial K_s$.

\medskip

\subsection{Solutions  in large balls}

For $R>3$, let $V_R^s$ denote the $d$-capacitary potential of $K_s$
relative to $B(0,R)$, extended by $1$ on $K_s$ and by $0$ outside
$B(0,R)$.  Thus $V_R^s$ is $d$-harmonic in $B(0,R)\setminus K_s$ and its
inner boundary value is understood $d$-quasieverywhere.

\begin{lemma} Assume that $R$ is sufficiently large and let $V_R^s$ be as defined above.
Then,
\begin{eqnarray}\label{eq:P12}
\min_{|x|=2}  V_R^s (x)\ge  1- C  \frac{1}{\log R},
\end{eqnarray}
where $C=C(d,K_s)>0$ depends only on $d$ and the compact set $K_s$.
\end{lemma}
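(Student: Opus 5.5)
We need $\min_{|x|=2} V_R^s \ge 1 - C/\log R$. The plan has three steps: compare $V_R^s$ on the sphere $|x|=2$ with its maximum value there via a Harnack-type oscillation bound, control the energy of $V_R^s$ by a capacity comparison, and combine the two through the radial variational problem outside $B(0,2)$.

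\textbf{Energy bounds.} Since $K_s\subset \bar B(0,1)$, monotonicity gives
\[
E(V_R^s,B(0,R))={\rm cap}_d(K_s,B(0,R))\le {\rm cap}_d(\bar B(0,1),B(0,R))=\omega_{d-1}(\log R)^{1-d}.
\]
We also need a lower bound. Take the competitor $\min\{1, V_3^s\cdot c\}$, or more simply use the standard estimate that for $R>3$,
\[
{\rm cap}_d(K_s,B(0,R))^{-1/(d-1)} \le {\rm cap}_d(K_s,B(0,3))^{-1/(d-1)} + \omega_{d-1}^{-1/(d-1)}\log(R/3).
\]
This series-type inequality is the $d$-capacity analogue of adding resistances in series. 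It can be proved by gluing the capacitary potential of $K_s$ in $B(0,3)$, scaled to range in $[1-t,1]$, with the radial function $(1-t)\,(1-\log(|x|/3)/\log(R/3))$ and optimizing over $t$. It gives ${\rm cap}_d(K_s,B(0,R))\ge \omega_{d-1}(\log R + C_1)^{1-d}$ with $C_1$ depending on $m_0$ and $d$.

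\textbf{Lower bound for the values on $|x|=2$ (main step).} Set $M=\max_{|x|=2}V_R^s$ and $m=\min_{|x|=2}V_R^s$. Outside $B(0,2)$, $V_R^s$ is $d$-harmonic, so it dominates the radial $d$-harmonic function with boundary data $m$ on $|x|=2$ and $0$ on $|x|=R$. It is also dominated by the analogous function with $M$ in place of $m$, which is the step that will bound $M$ from below.

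For the energy, the radial minimization outside $B(0,2)$ gives
\[
E(V_R^s,B(0,R)\setminus \bar B(0,2))\ \ge\ \omega_{d-1}\,m^d(\log(R/2))^{1-d}.
\]
This inequality does not yet pin down $m$. Instead, the key is to show that $1-M$ and $M-m$ are both $O(1/\log R)$.

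\emph{Oscillation.} In the annulus $1.5<|x|<3$, the function $V_R^s$ is nonnegative and $d$-harmonic. By Harnack's inequality applied to $1-V_R^s$, which is also nonnegative and $d$-harmonic there, we get $\max_{|x|=2}(1-V_R^s)\le C_H \min_{|x|=2}(1-V_R^s)$. Hence $1-m\le C_H(1-M)$, and it suffices to bound $1-M$.

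\emph{Bounding $1-M$.} By comparison inside $B(0,2)$, the function $V_R^s$ dominates $1-(1-m)\,(1-W)$, where $W$ is the capacitary potential of $K_s$ in $B(0,2)$. Since $d$-harmonic functions are not linear in their data, this must be checked via comparison with boundary values $m$ and $1$. Then $V_R^s\ge m + (1-m)W$ is $d$-subharmonic-compatible, since $a+bW$ is $d$-harmonic for constants $a$ and $b>0$.

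It is cleaner to argue by energy. Write $m=1-\delta$. The energy inside $B(0,2)$ is at least the capacity of $K_s$ in $B(0,2)$ for data jumping by $\delta$, which is at least $\delta^d m_0'$ for some $m_0'>0$ depending on $m_0$, using $\mathbf{H_2}$ and monotonicity in the domain. The energy outside $B(0,2)$ is at least $\omega_{d-1}m^d(\log(R/2))^{1-d}$. Summing and using the upper bound,
\[
\delta^d m_0' + \omega_{d-1}(1-\delta)^d(\log(R/2))^{1-d}\ \le\ \omega_{d-1}(\log R)^{1-d}.
\]
Set $L=\log R$. Then $(\log(R/2))^{1-d}-(\log R)^{1-d}\approx (d-1)(\log 2)\,L^{-d}$. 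The inequality therefore gives
\[
\delta^d m_0' + L^{1-d}\bigl(-d\,\delta+O(\delta^2)\bigr)\ \lesssim\ 0,
\]
so $\delta^d\lesssim \delta L^{1-d}$, that is, $\delta\lesssim L^{-1}$.

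This last step uses $m$, the minimum, in the energy lower bound outside $B(0,2)$. That is valid because the radial minimizer with boundary value at least $m$ on $|x|=2$ has energy at least that of boundary value exactly $m$. Inside $B(0,2)$, however, the value $1-\delta$ on $|x|=2$ is not constant. So I would replace the interior bound by the capacity of $K_s$ relative to the superlevel set where $V_R^s\ge 1-\delta$, or apply Harnack first to reduce to $M$, using the bound $1-m\le C_H(1-M)$ from the oscillation step.

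The main obstacle is exactly this interface coupling between the interior and exterior energy estimates. A likely route is to truncate: $\min\{V_R^s,1-\delta\}/(1-\delta)$ and $(V_R^s-(1-\delta))_+/\delta$ split the energy (coarea in levels), giving exactly the series inequality above. The constant $C$ then depends on $d$ and $m_0$, hence on $K_s$.
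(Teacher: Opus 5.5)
Your core argument is correct and differs from the paper's, but one claim in it is false and should be deleted.

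\textbf{The false claim.} The series inequality in your energy step points the wrong way. Gluing $(1-t)+tV_3^s$ to the radial profile produces a competitor, so it only yields the \emph{upper} bound ${\rm cap}_d(K_s,B(0,R))^{-1/(d-1)}\ge{\rm cap}_d(K_s,B(0,3))^{-1/(d-1)}+\omega_{d-1}^{-1/(d-1)}\log(R/3)$. The series value is the minimum energy over competitors constant on $\partial B(0,3)$. Hence the reverse inequality you state holds only when $V_R^s$ happens to be constant on that sphere. Truncating at the level $m=1-\delta$ does not rescue it, because $\{V_R^s>m\}$ generally leaves $B(0,2)$. The bound ${\rm cap}_d(K_s,B(0,R))\ge\omega_{d-1}(\log R+C_1)^{1-d}$ is true, but the paper obtains it as a consequence of this lemma, not as an input.

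\textbf{What actually closes the argument.} Your main step needs only the trivial bound ${\rm cap}_d(K_s,B(0,R))\le\omega_{d-1}(\log R)^{1-d}$. The coupling problem you raise is resolved by the Harnack reduction you already mention:
\begin{itemize}
\item $V_R^s\le M$ on $\partial B(0,2)$, so $(V_R^s-M)_+/(1-M)$ is admissible for ${\rm cap}_d(K_s,B(0,2))$.
\item $V_R^s\ge m$ on $\partial B(0,2)$ gives the radial lower bound outside $B(0,2)$.
\item Harnack applied to $1-V_R^s$ in $\{3/2<|x|<5/2\}$ gives $1-m\le C_H(1-M)$.
\end{itemize}
With $\delta=1-m$ these combine to
\[
\Bigl(\frac{\delta}{C_H}\Bigr)^{d}{\rm cap}_d\bigl(K_s,B(0,2)\bigr)+(1-\delta)^d\frac{\omega_{d-1}}{(\log(R/2))^{d-1}}\le\frac{\omega_{d-1}}{(\log R)^{d-1}}.
\]
Since $(\log(R/2))^{1-d}\ge(\log R)^{1-d}$ and $1-(1-\delta)^d\le d\delta$, this gives
\[
\delta\le\Bigl(\frac{d\,\omega_{d-1}C_H^d}{{\rm cap}_d(K_s,B(0,2))}\Bigr)^{1/(d-1)}\frac{1}{\log R},
\]
which is the lemma. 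The comparison digression involving $W$ plays no role and can be dropped.

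\textbf{Comparison with the paper.} The paper uses no energy at all. It compares $1-V_R^s$ with $\max_{|y|=3}(1-V_R^s)\,(1-V_3^s)$ inside $B(0,3)$. Outside $B(0,2)$ it compares with the radial barrier $1-\bigl(1-\max_{|x|=2}(1-V_R^s)\bigr)Z_R$. The resulting two-scale contraction gives $C\sim(1-a)\log(3/2)/a$ with $a=\min_{|x|=2}V_3^s$.

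That route needs only the comparison principle. However, its constant is expressed through $a(K_s)$, so uniformity in $s$ requires the separate Lemma~\ref{lem:uniform-a}, which itself uses Harnack, gradient estimates and an energy competitor.

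Your route invokes Harnack directly. Its constant depends only on $d$ and ${\rm cap}_d(K_s,B(0,2))\ge{\rm cap}_d(K_s,B(0,3))\ge m_0$. Under $\mathbf{H_2}$ it therefore yields the uniform estimate in one step, making Lemma~\ref{lem:uniform-a} unnecessary for this purpose.
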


\begin{proof} Let $R\gg1$, fix $s\in S$, and set
\begin{eqnarray}\label{eq:P1}
\tilde V_R^s:= 1-V_R^s, \quad \tilde Z_R=1-Z_R,
\end{eqnarray}
where
\begin{eqnarray}
Z_R(x)=\left\{
\begin{array}{ll}
1 & 0\le |x| \le 2,\\
1-\frac{\log (|x|/2)}{\log (R/2)} & 2<|x|<R,\\
0& |x|\ge R,
\end{array}
\right.
\end{eqnarray}
 is the $d$-equilibrium potential of $\bar B(0,2)$ in $B(0,R)$.

We claim that the following two inequalities hold
\begin{eqnarray}\label{eq:P2}
\tilde V_R^s(x) \le \tilde V_3^s(x) \max_{|y|=3} \tilde V_R^s(y), \quad x\in B(0,3)\setminus K_s
\end{eqnarray}
and
\begin{eqnarray}\label{eq:P3}
\tilde V_R^s (y)\le \tilde Z_R(y)+Z_R(y) \max_{|x|=2} \tilde V_R^s(x), \quad y\in B(0,R)\setminus \bar B(0,2).
\end{eqnarray}
Both sides of these inequalities are $d$-harmonic in the indicated domains.
On $K_s$ their quasicontinuous boundary values are zero $d$-quasieverywhere,
so the weak comparison principle from Lemma \ref{lem:variational} applies without
any regularity assumption on $\partial K_s$.  On $\partial B(0,3)$ we have
$\tilde V_3^s=1$, and therefore \eqref{eq:P2} reduces to
$\tilde V_R^s(x) \le \max_{|y|=3} \tilde V_R^s(y)$.
Thus, \eqref{eq:P2} holds. Next we consider \eqref{eq:P3}. When $y\in \partial B(0,2)$ this inequality reads $\tilde V_R^s(y)\le \max_{|x|=2} \tilde V_R^s(x)$ for $|y|=2$.
When $y\in \partial B(0,R)$ \eqref{eq:P3} becomes an equality as $\tilde V_R^s(y)=\tilde Z_R(y)=1$ and $Z_R(y)=0$ when $|y|=R$.

Next observe that inequality \eqref{eq:P2} implies
\begin{eqnarray}\label{eq:P4}
\max_{|x|=2} \tilde V_R^s (x) \le \max_{|x|=2} \tilde V_3^s(x) \max_{|y|=3}  \tilde V_R^s (y) 
\end{eqnarray}
Recall that 
\begin{eqnarray}\label{eq:P5}
\max_{|x|=2} \tilde V_3^s(x)=1-\min_{|x|=2} V_3^s(x)=1-a
\end{eqnarray}
for some $a\in (0,1]$ where $a=a(K_s)$.
Hence,
\begin{eqnarray}\label{eq:P6}
\max_{|x|=2} \tilde V_R^s (x) \le (1-a) \max_{|y|=3}  \tilde V_R^s (y) 
\end{eqnarray}
On the other hand \eqref{eq:P3} gives
\begin{eqnarray}\label{eq:P7}
\max_{|y|=3}  \tilde V_R^s (y)\le \max_{|y|=3}\tilde Z_R(y)+\max_{|y|=3} Z_R(y) \max_{|x|=2} \tilde V_R^s(x)
\end{eqnarray}
Substituting the definition of $Z_R$ into the inequality above we have
\begin{eqnarray}\label{eq:P8}
\max_{|y|=3}  \tilde V_R^s (y)\le \frac{\log(3/2)}{\log(R/2)} +\left( 1-\frac{\log(3/2)}{\log(R/2)} \right)\max_{|x|=2} \tilde V_R^s(x)
\end{eqnarray}
Combining \eqref{eq:P6} and \eqref{eq:P8} we obtain
\begin{eqnarray}\label{eq:P9}
\max_{|y|=3}  \tilde V_R^s (y)\le \frac{\log(3/2)}{\log(R/2)} +(1-a) \left( 1-\frac{\log(3/2)}{\log(R/2)} \right)\max_{|y|=3}  \tilde V_R^s (y) 
\end{eqnarray}
Hence,
\begin{eqnarray}\label{eq:P10}
\max_{|y|=3}  \tilde V_R^s (y)\le \frac{1}{a} \frac{\log(3/2)}{\log(R/2)} .
\end{eqnarray}
Combining \eqref{eq:P6} and \eqref{eq:P10} and using the definition of $V_R^s$  we have
\begin{eqnarray}\label{eq:P11}
1-\min_{|x|=2}  V_R^s (x)=\max_{|x|=2} \tilde V_R^s (x) \le \frac{(1-a)}{a} \frac{\log(3/2)}{\log(R/2)}. 
\end{eqnarray}
Therefore, we have
\begin{eqnarray}\label{eq:P12proof}
\min_{|x|=2}  V_R^s (x)\ge  1- C  \frac{1}{\log R},
\end{eqnarray}
where the constant $C=C(d,K_s)>0$ depends only on $d$ and the compact set $K_s$.
\end{proof}

\begin{lemma}\label{lem:uniform-a} Let $a(s) =\min_{|x|=2} V^s_3(x)$. Then, under the assumption ${\bf  H_2}$, we have
\[
\min_{s\in S} a(s) \ge c_d m_0^\frac{1}{d-1},
\]
where $c_d>0$ depends only on $d$.
\end{lemma}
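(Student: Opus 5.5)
The plan is to combine a level-set identity for the capacitary potential with the Harnack inequality. The point of this route is that the constant produced depends only on $d$ and on $m_0$, with no dependence on the shape of $K_s$.

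Fix $s\in S$ and write $V=V_3^s$ and $M:=\max_{|x|=2}V(x)\le 1$.

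\emph{Step 1 (upper bound on $V$ in the outer annulus).} On $\{2<|x|<3\}$, compare $V$ with $M\log(3/|x|)/\log(3/2)$. Both are $d$-harmonic there. On $|x|=2$ we have $V\le M$, and on $|x|=3$ both functions vanish. Lemma~\ref{l:comp} therefore gives $V(x)<M$ for $2<|x|<3$. Since $V=0$ for $|x|\ge 3$, the superlevel set $\{V\ge M\}$ is contained in $\bar B(0,2)$.

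\emph{Step 2 (level-set scaling of capacity).} For $0<t\le1$, I claim that $V_t:=\min(V/t,1)$ is the $d$-equilibrium potential of $K_t:=\{V\ge t\}$ relative to $B(0,3)$, and that
\begin{eqnarray*}
{\rm cap}_d(K_t,B(0,3))=E(V_t,B(0,3))=t^{-d}\int_{\{V<t\}}|\nabla V|^d\,dx = t^{1-d}\,{\rm cap}_d(K_s,B(0,3)).
\end{eqnarray*}
The first equality is Lemma~\ref{lem:variational}. Here $V_t$ is $d$-harmonic off $K_t$ and equals $1$ on it, up to quasicontinuity issues that are handled within the variational framework of Lemma~\ref{lem:variational}. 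For the last equality, test the weak equation for $V$ with $\eta=\min(V,t)$, which is admissible because $\eta-t\phi$ vanishes on $K_s$ in the Sobolev sense. This yields $\int_{\{V<t\}}|\nabla V|^d=t\int|\nabla V|^{d-2}\nabla V\cdot\nabla\phi = t\,{\rm cap}_d(K_s,B(0,3))$, where the flux constant is identified by taking $t=1$.

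Now take $t=M$. By Step~1, monotonicity of capacity and ${\bf H_2}$,
\begin{eqnarray*}
m_0 M^{1-d}\le {\rm cap}_d(K_M,B(0,3))\le {\rm cap}_d(\bar B(0,2),B(0,3))=\frac{\omega_{d-1}}{(\log(3/2))^{d-1}} .
\end{eqnarray*}
Hence $M\ge \log(3/2)\,\omega_{d-1}^{-1/(d-1)}\,m_0^{1/(d-1)}$.

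\emph{Step 3 (Harnack).} $V$ is a positive $d$-harmonic function in the fixed annulus $\{1<|x|<3\}$, because $K_s\subset\bar B(0,1)$. The sphere $\{|x|=2\}$ is a compact connected subset of this annulus, so a Harnack chain with a fixed number of balls gives $\max_{|x|=2}V\le C_H\min_{|x|=2}V$, with $C_H=C_H(d)$. Therefore $a(s)\ge M/C_H\ge c_d m_0^{1/(d-1)}$ with $c_d=\log(3/2)\,\omega_{d-1}^{-1/(d-1)}/C_H$. This bound is uniform in $s$ and $\eps$, so it also holds for $\min_{s\in S}a(s)$.

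The main obstacle is Step~2. One must justify rigorously, for an arbitrary compact $K_s$, that truncations of the potential are again equilibrium potentials, and that the flux identity holds using only quasicontinuous boundary values. I would first attempt this by working entirely in the admissible class $\mathcal A$ of Lemma~\ref{lem:variational} and using uniqueness of the minimizer. If that proves awkward, a fallback is to replace the exact identity by the inequality ${\rm cap}_d(K_t)\ge t^{1-d}{\rm cap}_d(K_s)$. That inequality follows because $tW$ is admissible for $K_s$ whenever $W$ is admissible for $K_t$, since $V\le 1$ forces $tW\ge V$ to be unnecessary: one only needs that $\max(tW,V)$ is admissible for $K_s$ and has energy at most $t^dE(W)+\int_{\{V\ge t\}}\ldots$. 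In that case the estimate should be arranged so that only this one-sided bound is needed.
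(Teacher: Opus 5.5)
Your argument is correct, and it takes a different route from the paper. The paper combines Harnack's inequality with the interior gradient estimate for $d$-harmonic functions to get $V_3^s+|\nabla V_3^s|\le C_d a$ on $|x|=2$. It then tests minimality against a competitor equal to $a+(1-a)V_3^s$ in $B(0,2)$ and to the linear radial cutoff $(3-r)h(\theta)$ of its trace in $2\le r\le 3$. This yields $(1-(1-a)^d)\,{\rm cap}_d(K_s,B(0,3))\le C_d a^d$, hence $a^{d-1}\ge m_0/C_d$.

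You instead bound $M=\max_{|x|=2}V$ from below using three ingredients:
\begin{enumerate}
\item comparison in the outer annulus;
\item the level-set identity ${\rm cap}_d(\{V\ge t\},B(0,3))=t^{1-d}\,{\rm cap}_d(K_s,B(0,3))$;
\item monotonicity against the explicit capacity of $\bar B(0,2)$.
\end{enumerate}
Harnack is used only to pass from $\max$ to $\min$. Your route avoids the gradient bound, which is a considerably deeper regularity fact than Harnack, and it gives the explicit constant $\log(3/2)\,\omega_{d-1}^{-1/(d-1)}/C_H$. The paper's route stays inside the admissible class of the single set $K_s$. Yours must instead handle the auxiliary compact sets $K_t$ through quasicontinuous representatives.

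Three small points need fixing.
\begin{itemize}
\item Admissibility of $V_t$ only gives ${\rm cap}_d(K_t,B(0,3))\le t^{1-d}{\rm cap}_d(K_s,B(0,3))$, which is the useless direction. The lower bound you need comes from \emph{minimality} of $V_t$. That follows from the weak equation for $V$, since $\nabla V_t=t^{-1}\nabla V\,1_{\{V<t\}}$ and $H_0^{1,d}(B(0,3)\setminus K_t)\subset H_0^{1,d}(B(0,3)\setminus K_s)$.
\item Your fallback competitor $\max(tW,V)$ need not have the stated energy off $K_t$. Use $tW+(V-t)_+$ instead, with $0\le W\le1$ admissible for $K_t$. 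It is admissible for $K_s$, and its two gradient pieces live on the disjoint sets $\{V\le t\}$ and $\{V>t\}$. With your flux identity it gives ${\rm cap}_d(K_s)\le t^dE(W)+(1-t)\,{\rm cap}_d(K_s)$, which is exactly the required one-sided bound.
\item You should record that $M>0$. If $M=0$, Step~1 puts $\{V\ge t\}$ inside $\bar B(0,2)$ for every $t\in(0,1]$. Step~2 then gives $m_0t^{1-d}\le\omega_{d-1}(\log(3/2))^{1-d}$ for all such $t$, which is absurd as $t\downarrow0$.
\end{itemize}
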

\begin{proof}
Fix $s\in S$ and write $a=a(s)$.  Since $K_s\subset\bar B(0,1)$,
$V_3^s$ is positive and $d$-harmonic on the fixed annulus
$B(0,5/2)\setminus\bar B(0,3/2)$. Harnack's inequality and the standard
interior gradient estimate for $d$-harmonic functions (see
\cite[Chapters~3--4]{Lindq}) give
\begin{eqnarray}\label{eq:trace-estimate}
\sup_{|x|=2}\bigl(V_3^s(x)+|\nabla V_3^s(x)|\bigr)\le C_d a.
\end{eqnarray}
Set $h(\theta)=a+(1-a)V_3^s(2\theta)$ on $\mathbb S^{d-1}$ and define on
$2\le r\le3$
\begin{eqnarray}
g^s(r\theta)=(3-r)h(\theta).
\end{eqnarray}
Then $g^s=h$ on $|x|=2$, $g^s=0$ on $|x|=3$, and
\eqref{eq:trace-estimate} yields $E(g^s,B(0,3)\setminus\bar B(0,2))\le C_d a^d$.
The function
\begin{eqnarray}
w(x)=\begin{cases}
a+(1-a)V_3^s(x),&x\in B(0,2)\setminus K_s,\\
g^s(x),&x\in B(0,3)\setminus\bar B(0,2),
\end{cases}
\end{eqnarray}
belongs to the admissible class for $K_s$ in $B(0,3)$.  Hence Lemma
\ref{lem:variational} gives
\begin{eqnarray}
E(V_3^s,B(0,3))
&\le&E(w,B(0,3))\\
&\le&(1-a)^dE(V_3^s,B(0,3))+C_d a^d.
\end{eqnarray}
Since $1-(1-a)^d\ge a$ for $0\le a\le1$ and
$E(V_3^s,B(0,3))={\rm cap}_d(K_s,B(0,3))\ge m_0$, we obtain
$a^{d-1}\ge m_0/C_d$, uniformly in $s$.
\end{proof}

Combining the two preceding lemmas gives the following uniform comparison.
For $R\ge R_0(d,m_0)$,
\begin{eqnarray}
\Vert V_R^s-U_R\Vert _{L^{\infty}(B(0,R))} \le \frac{C}{\log R}, \quad \forall s\in S,
\end{eqnarray}
and
\begin{eqnarray}
\left (1-\frac{C}{\log R}\right) E(U_R, B(0,R))\le  E(V_R^s, B(0,R)) \le E(U_R, B(0,R)),
\end{eqnarray}
where $C=C(d,m_0)$.  Indeed, $U_R$ is admissible for $K_s$, and hence
$E(V_R^s)\le E(U_R)$.   Monotonicity of equilibrium potentials yields the pointwise inequality $V_R^s\le U_R$.  The two lemmas imply that
$V_R^s\ge1-C/\log R$ on $B(0,2)$ and, by comparison with the radial
potential of $\bar B(0,2)$, that $V_R^s\ge U_R-C/\log R$ outside that ball.
Finally, $(1-C/\log R)^{-1}V_R^s$ is admissible for $\bar B(0,2)$;
the capacity formula for concentric balls gives the asserted lower energy bound.

\subsection{Energy of an ansatz function}

Recall the radial and shape-adapted ansatz functions from \eqref{eq:2da3} and
\eqref{eq:2da3hat}.  The supports of all bump gradients are pairwise disjoint
and are also disjoint from the support of $\nabla U_\rho^\eps$.  Therefore
\begin{eqnarray} \label{decomp-ener}
E(u_A^\eps,B(0,\rho))&=&A^dE(U_\rho^\eps,B(0,\rho))
 +(1-A)^d |S|q_\eps,\nonumber\\
E(\widehat u_A^\eps,B(0,\rho))&=&A^dE(U_\rho^\eps,B(0,\rho))
 +(1-A)^d\sum_{s\in S}\kappa_s,
\end{eqnarray}
where
\begin{eqnarray}\label{def:qkappa}
q_\eps:=E\left(U_{\frac1{10\alpha}},B\left(0,\frac1{10\alpha}\right)\right),
\qquad
\kappa_s:={\rm cap}_d\left(K_s,B\left(0,\frac1{10\alpha}\right)\right).
\end{eqnarray}
A direct computation and the comparison in the preceding subsection give,
with $L_\eps:=\log(1/(10\alpha))$,
\begin{eqnarray}
E(U_\rho^\eps,B(0,\rho))&=&\frac{\omega_{d-1}}{(\log (\rho/(1+\eps)))^{d-1}}
=\frac{\omega_{d-1}}{(\log \rho)^{d-1}}(1+O(\eps)),\nonumber\\
q_\eps&=&\frac{\omega_{d-1}}{L_\eps^{d-1}},\nonumber\\
\left(1-\frac{C}{L_\eps}\right)q_\eps&\le&\kappa_s\le q_\eps
\qquad(s\in S).
\end{eqnarray}
Since
\begin{eqnarray}
\eps^{d-1} |S| \to \sigma \quad \mbox{as} \quad \eps\to 0
\end{eqnarray}
and, for $0\le \tau<\infty$, $L_\eps^{-1}/\eps\to \tau$, we have
\begin{eqnarray}\label{limener}
E(u_A^\eps,B(0,\rho)),\ E(\widehat u_A^\eps,B(0,\rho))
 \longrightarrow \varphi_\tau(A),
\end{eqnarray}
uniformly for $A\in[0,1]$ when $\tau<\infty$, where
\begin{eqnarray}\label{def:varphi}
\varphi_\tau(A)=\omega_{d-1}\left (\frac{A^d}{(\log \rho)^{d-1}}+(1-A)^d \sigma \tau^{d-1} \right), \qquad 0\le \tau<\infty.
\end{eqnarray}
For $\tau=\infty$, we use the extended convention $\varphi_\infty(1)=\omega_{d-1}/(\log\rho)^{d-1}$ and $\varphi_\infty(A)=\infty$ for $0\le A<1$.
For $0<\tau<\infty$, the minimum of $\varphi_\tau$ is attained at
\begin{eqnarray}
A_*=\frac{\tau \sigma^\frac{1}{d-1}\log\rho}{1+\tau \sigma^\frac{1}{d-1}\log\rho}.
\end{eqnarray}
Also,
\begin{eqnarray}\label{phimin}
\varphi_\tau(A_*)=\omega_{d-1}\left(  \frac{\tau  \sigma ^\frac{1}{d-1}}{1+\tau \sigma^{\frac{1}{d-1}} \log \rho} \right)^{d-1}
=\frac{\omega_{d-1}}{(\log\rho)^{d-1}}A_*^{d-1}.
\end{eqnarray}
The final identity also holds at $\tau=0$ and $\tau=\infty$ under the
extended convention above.

\begin{lemma}\label{lem:ansatz-comparison}
For $A=A_*$, the two ansatz functions satisfy
\begin{eqnarray}\label{eq:ansatz-comparison}
\|\widehat u_{A_*}^\eps-u_{A_*}^\eps\|_{H_0^{1,d}(B(0,\rho))}
+\|\widehat u_{A_*}^\eps-u_{A_*}^\eps\|_{L^\infty(B(0,\rho))}\longrightarrow0.
\end{eqnarray}
\end{lemma}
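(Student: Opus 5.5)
The difference of the two ansatz functions is a sum of disjointly supported bump differences:
\[
\widehat u_{A_*}^\eps-u_{A_*}^\eps=(1-A_*)\sum_{s\in S}\bigl(V^s_{R}-U_{R}\bigr)\Bigl(\frac{x-s}{\alpha\eps}\Bigr),\qquad R=\frac1{10\alpha}.
\]
The $U_\rho^\eps$ terms cancel. Each summand is supported in $B(s,\eps/10)$, and these balls are pairwise disjoint by the $\eps$-separation of the anchors. If $\tau=\infty$ then $A_*=1$ and the difference vanishes identically, so only the case $\tau<\infty$ needs an argument. I will estimate the $L^\infty$ and $H_0^{1,d}$ norms separately.

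\emph{The $L^\infty$ part.} This follows from the uniform comparison after Lemma \ref{lem:uniform-a}, applied with $R=1/(10\alpha)\to\infty$. It gives $\|V_R^s-U_R\|_{L^\infty}\le C/\log R=C/L_\eps$, uniformly in $s$. By disjointness of supports, the $L^\infty$ norm of the sum is at most $(1-A_*)C/L_\eps\to0$.

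\emph{The energy part.} Disjointness and dilation invariance of the $d$-energy give
\[
E\bigl(\widehat u_{A_*}^\eps-u_{A_*}^\eps,B(0,\rho)\bigr)=(1-A_*)^d\sum_{s\in S}E\bigl(V_R^s-U_R,B(0,R)\bigr).
\]
Since $|S|q_\eps$ stays bounded when $\tau<\infty$, it suffices to show
\[
E(V_R^s-U_R,B(0,R))=o(q_\eps)\quad\text{uniformly in } s.
\]
I would use the standard uniform convexity of $\xi\mapsto|\xi|^d$ (Clarkson type, for $d\ge2$; for $1<d<2$ a corresponding inequality with a Hölder step). It gives
\[
\Bigl|\frac{\xi-\eta}{2}\Bigr|^d\le \frac{|\xi|^d+|\eta|^d}{2}-\Bigl|\frac{\xi+\eta}{2}\Bigr|^d .
\]
The midpoint $(V_R^s+U_R)/2$ lies in the admissible class for $K_s$, because $U_R=1$ on $\bar B(0,1)\supset K_s$. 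Its energy is therefore at least $\kappa_s$. Integrating the inequality then gives
\[
2^{-d}E(V_R^s-U_R)\le\frac{\kappa_s+q_\eps}{2}-\kappa_s=\frac{q_\eps-\kappa_s}{2}\le \frac{C}{2L_\eps}\,q_\eps,
\]
by the two-sided estimate $(1-C/L_\eps)q_\eps\le\kappa_s\le q_\eps$ from \eqref{def:qkappa} and the display after it. Summing over $s$, the total energy of the difference is at most $C'|S|q_\eps/L_\eps\to0$.

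The Sobolev norm in $H_0^{1,d}$ is controlled by the gradient norm via Poincaré on $B(0,\rho)$; alternatively, the $L^d$ norm is bounded directly by the $L^\infty$ bound. So the $H_0^{1,d}$ norm tends to $0$ as well.

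The main obstacle is the energy estimate, in particular getting a uniform-convexity inequality valid for all $d>1$. For $1<d<2$ I would use the inequality
\[
\int|\nabla(v-w)|^d\le C\Bigl(\int\bigl(|\xi|^{d}+|\eta|^{d}\bigr)\Bigr)^{1-d/2}\Bigl(\text{convexity gap}\Bigr)^{d/2}.
\]
Here the convexity gap is still $O(q_\eps/L_\eps)$ and the total energy is $O(q_\eps)$, so the per-bump bound becomes $O(q_\eps L_\eps^{-d/2})$, which is again $o(q_\eps)$ uniformly in $s$.
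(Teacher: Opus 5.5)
Your proposal is correct and follows the paper's proof essentially step for step. The shared steps are: cancellation of the bulk term and disjoint bump supports; the uniform comparison $\|V_R^s-U_R\|_\infty\le C/L_\eps$ for the $L^\infty$ part; Clarkson's inequality with the admissible midpoint $(U_R+V_R^s)/2$ and minimality, giving a per-bump energy bound $2^{d-1}(q_\eps-\kappa_s)\le C q_\eps/L_\eps=O(L_\eps^{-d})$; summation over $|S|\sim\sigma\eps^{1-d}$ bumps, which tends to $0$ when $\tau<\infty$; the trivial case $\tau=\infty$; and Poincar\'e. Your extra discussion of $1<d<2$ is harmless but unnecessary, since $d$ is the dimension, so $p=d\ge2$ and Clarkson's inequality applies directly, as the paper notes.
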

\begin{proof}
The uniform large-ball comparison gives
\begin{eqnarray}
\|\widehat u_{A}^\eps-u_A^\eps\|_\infty\le \frac{C(1-A)}{L_\eps},
\end{eqnarray}
because the bump supports are disjoint.  For the gradients, $U_R$ is an
admissible competitor for the $K_s$-potential $V_R^s$, and
$(U_R+V_R^s)/2$ is admissible as well.  Clarkson's inequality
\cites{Clarkson,Boas} and minimality
therefore give
\begin{eqnarray}\label{eq:single-bump-gradient}
\|\nabla U_R-\nabla V_R^s\|_{L^d(B(0,R))}^d
\le C_d\bigl(E(U_R)-E(V_R^s)\bigr)\le \frac{C}{(\log R)^d}.
\end{eqnarray}
Scaling does not change $d$-energy.  Summing over the disjoint supports, for
$\tau<\infty$ we obtain
\begin{eqnarray}
\|\nabla\widehat u_A^\eps-\nabla u_A^\eps\|_d^d
\le C(1-A)^d|S|L_\eps^{-d}
=O\!\left(\eps\left(\frac1{\eps L_\eps}\right)^d\right)=o(1).
\end{eqnarray}
For $\tau=\infty$ we have $A_*=1$, so the two ansatz functions coincide.
Poincar\'e's inequality completes the proof.
\end{proof}

\subsection{Energy estimate on truncated cones}

\begin{definition}
Let  $y \in {\mathbb S}^{d-1}$ and let $Q_{\delta}(y):=B(y,\delta) \cap {\mathbb S}^{d-1}$ be the open  spherical cap of Euclidean radius $\delta$ centered at $y$.
We define the spherical cone
\begin{eqnarray}\label{a1}
\Lambda_{\delta}(y):=\{ r q: q\in Q_{\delta}(y)\; \, \text{\rm and} \; \, \rho>r>0\}.
\end{eqnarray}
\end{definition}

\begin{lemma}\label{l:holder}
Let $Q$ be an open set  on the unit sphere,  and for $\rho>R>1$ set
\begin{eqnarray}\label{eqh:3}
\Lambda(Q,R):=\{ r q: q\in Q\; \, \text{\rm and} \; \, R<r<\rho\}\,.
\end{eqnarray}
Suppose that a nonnegative function $v\in H^{1,d}(\Lambda(Q,R)) \cap C(\overline{\Lambda(Q,R)})$ satisfies
 \begin{eqnarray}\label{eqh:7a}
 v(x)=0 \quad  \mbox{at}  \quad |x|=\rho \,,
 \end{eqnarray}
 and
 \begin{eqnarray}\label{eqh:7b}
 v(Ry)\ge \tilde A>0, \quad \forall y\in Q\,.
 \end{eqnarray}
Then,
\begin{eqnarray}\label{eqh:7}
E(v,\Lambda(Q,R)) \ge E(v_{\tilde A},\Lambda(Q,R)) \,,
\end{eqnarray}
where  
\begin{eqnarray}\label{eqh:7x}
v_{\tilde A}(x)=\tilde A \left(1-\frac{\log(|x|/R)}{\log(\rho/R)}\right)\quad \forall x \in \Lambda(Q,R).
\end{eqnarray}
\end{lemma}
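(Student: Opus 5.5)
We work in spherical coordinates $x=rq$ with $q\in Q$ and $R<r<\rho$, and reduce the statement to a one-dimensional inequality along each ray. The volume element is $dx=r^{d-1}\,dr\,d\mu_{S}(q)$, where $\mu_S$ is surface measure on $\mathbb S^{d-1}$. Since $|\nabla v|\ge|\partial_r v|$ pointwise, we have
\begin{eqnarray*}
E(v,\Lambda(Q,R))\ge\int_Q\int_R^\rho |\partial_r v(rq)|^d\, r^{d-1}\,dr\,d\mu_S(q).
\end{eqnarray*}
Because $v\in H^{1,d}$, for $\mu_S$-almost every $q\in Q$ the restriction $r\mapsto v(rq)$ is absolutely continuous on $[R,\rho]$; we use the ACL characterization in polar coordinates, which is legitimate on the annular region where the polar map is bi-Lipschitz. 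By continuity of $v$ on the closure, its endpoint values are $v(Rq)\ge\tilde A$ and $v(\rho q)=0$.

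Next we minimize along a single ray. Substituting $t=\log r$ gives $r^{d-1}|\partial_r v|^d\,dr=|\partial_t v|^d\,dt$, since $r^{d-1}\cdot r^{-d}\cdot r\,dt=dt$. This is exactly the conformal invariance at $p=d$. On the interval $[\log R,\log\rho]$, which has length $\ell=\log(\rho/R)$, Hölder's (Jensen's) inequality gives
\begin{eqnarray*}
\int_{\log R}^{\log\rho}|\partial_t v|^d\,dt\ \ge\ \ell^{1-d}\Bigl|\int \partial_t v\,dt\Bigr|^d=\ell^{1-d}|v(Rq)-v(\rho q)|^d\ \ge\ \tilde A^d\,\ell^{1-d}.
\end{eqnarray*}
Equality holds precisely when $v$ is linear in $\log r$, that is, for $v_{\tilde A}$. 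Indeed, $v_{\tilde A}$ is radial with $|\nabla v_{\tilde A}|=|\partial_r v_{\tilde A}|=\tilde A/(r\ell)$, so its energy equals $\mu_S(Q)\,\tilde A^d\ell^{1-d}$.

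Integrating the per-ray bound over $q\in Q$ then yields $E(v,\Lambda(Q,R))\ge \mu_S(Q)\tilde A^d\ell^{1-d}=E(v_{\tilde A},\Lambda(Q,R))$, which is the claim. Nonnegativity of $v$ is not needed beyond the inequality $v(Rq)-v(\rho q)\ge\tilde A$. The only technical point is the justification of the ray-wise absolute continuity and of the boundary traces. We would handle it by first proving the inequality for smooth $v$ and then approximating in $H^{1,d}$ on slightly smaller annular cones $\{R'<r<\rho'\}$, using uniform continuity of $v$ on the compact closure to control the endpoint values, and finally letting $R'\downarrow R$ and $\rho'\uparrow\rho$. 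We expect this approximation step, rather than the one-dimensional inequality, to be the main obstacle.
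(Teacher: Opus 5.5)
Your proposal is correct and follows essentially the same route as the paper: ACL along rays in bi-Lipschitz polar coordinate patches, the bound $|\partial_r v|\le|\nabla v|$, H\"older on each ray, integration over $Q$, and the explicit energy $\mu_S(Q)\tilde A^d(\log(\rho/R))^{1-d}$ of $v_{\tilde A}$. Your substitution $t=\log r$ is the paper's splitting $|\nabla v|=|\nabla v|\,r^{(d-1)/d}\cdot r^{(1-d)/d}$ written differently. The smooth-approximation fallback is not needed: the ray-wise absolute continuity you already invoke, together with continuity of $v$ on the closure, handles the endpoint values exactly as in the paper.
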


\begin{proof}
Assume \eqref{eqh:7a} and \eqref{eqh:7b} hold. Cover $Q$ by countably many
relatively compact spherical coordinate patches. On each such patch, the
polar-coordinate map $(r,z)\mapsto r\Phi(z)$ is bi-Lipschitz. Composition
with this map preserves $H^{1,d}$, and the ACL theorem
\cite[Theorem~4.9.2]{EG92}, followed by Fubini's theorem, shows that for
$\mu$-almost every $y\in Q$ the function $r\mapsto v(ry)$ is absolutely
continuous on $(R,\rho)$ and
$|\partial_r v(ry)|\le|\nabla v(ry)|$ for almost every $r$. Thus, for
$\mu$-almost every $y\in Q$, H{\"o}lder's inequality yields
\begin{eqnarray}\label{eqh:1}
&& \tilde A\le \int_R^{\rho} \left | \nabla v(ry)\right| dr=\int_R^{\rho}  \left | \nabla v(ry)\right| r^\frac{d-1}{d} r^{\frac{1-d}{d}} \, dr\le \left(\int_R^{\rho}  \left | \nabla v(ry)\right|^d r^{d-1} \, dr\right)^\frac1d
\left( \int_R^{\rho} \frac{dr}{r}\right)^\frac{d-1}{d}\nonumber \\
&&=\left(\log(\rho/R)\right)^\frac{d-1}{d}  \left(\int_R^{\rho}  \left | \nabla v(ry)\right|^d r^{d-1}dr\right)^\frac1d.
\end{eqnarray}
Taking the $d$th power and rearranging terms, we obtain
\begin{eqnarray}\label{eqh:2}
\frac{\tilde A^d}{\left(\log(\rho/R)\right)^{d-1} }\le \int_R^{\rho}  \left | \nabla v(ry)\right|^d r^{d-1}dr.
\end{eqnarray}
Let   $\omega_{d-1}$ denote the area of the unit sphere.
Integrating \eqref{eqh:2} over $Q$, we obtain
\begin{eqnarray}\label{eqh:4}
\frac{\tilde A^d \omega_{d-1} \mu (Q)  }{\left(\log(\rho/R)\right)^{d-1} } \le \omega_{d-1} \int_Q \int_R^{\rho}  \left | \nabla v(ry)\right|^d r^{d-1}\, dr \, d\mu(y)=\int_{\Lambda(Q,R)}   \left | \nabla v(x) \right|^d \, dx =E(v,\Lambda(Q,R))\,.
\end{eqnarray}

By the condition for equality in H{\"o}lder's inequality, the second inequality in \eqref{eqh:1} is an equality if for some constant $C$,
\begin{eqnarray}\label{eqh:5}
 |\nabla v(ry)| =\frac{C}{r} \quad \mbox{\rm for a.e.} \; R<r<\rho \,.
\end{eqnarray}
The function $v_{\tilde A}(x)$  defined by \eqref{eqh:7x} satisfies  \eqref{eqh:5} and the boundary condition $v_{\tilde A}(Ry) =\tilde A$ for all $y \in Q$. Since
$$ |\nabla v_{\tilde A}(ry)|= -\frac{d}{dr}  v_{\tilde A}(ry)
$$
for $y \in \partial B(0,1)$, the first inequality in  \eqref{eqh:1} is also an equality if $v=v_{\tilde A}$. Therefore,
\begin{eqnarray}\label{eqh:6}
E(v_{\tilde A},\Lambda(Q,R)) = \frac{\tilde A^d \omega_{d-1} \mu (Q)  }{\left(\log(\rho/R)\right)^{d-1} }  \,.
\end{eqnarray}
Combining \eqref{eqh:4} and \eqref{eqh:6}, we obtain   \eqref{eqh:7}.
\end{proof}

\begin{lemma}[Uniform cap counts]\label{lem:uniform-caps}
For every fixed $r\in(0,2)$, hypothesis ${\bf  H_1} $ implies
\begin{eqnarray}\label{eq:uniform-caps}
\sup_{y\in\mathbb S^{d-1}}
\left|\eps^{d-1}|S(\eps)\cap Q_r(y)|-\sigma\mu(Q_r)\right|\longrightarrow0.
\end{eqnarray}
More generally, the same conclusion holds if $r$ is replaced by a scalar
sequence $r_\eps\to r$. If the perturbation depends on the center $y$, the
convergence to $r$ is assumed uniform in $y$.
\end{lemma}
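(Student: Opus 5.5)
The plan is to deduce the uniform statement from the weak-$\ast$ convergence in ${\bf H_1}$ by a sandwich argument with continuous test functions, followed by a compactness argument in the center $y$. Fix $r\in(0,2)$ and $\eta>0$. Because the sphere is homogeneous, $\mu(Q_r(y))=\mu(Q_r)$ does not depend on $y$. The function $r\mapsto\mu(Q_r)$ is continuous on $(0,2)$, since the boundary circle of a cap has $\mu$-measure zero. We may therefore choose $0<r_-<r<r_+<2$ with $\mu(Q_{r_+})-\mu(Q_{r_-})<\eta$.

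First we handle a fixed center $z$. We choose continuous functions $f^-_z,f^+_z$ on $\mathbb S^{d-1}$ with values in $[0,1]$ satisfying
\[
\mathbf 1_{Q_{r_-}(z)}\le f^-_z\le \mathbf 1_{Q_{r'_-}(z)},\qquad \mathbf 1_{Q_{r'_+}(z)}\le f^+_z\le \mathbf 1_{Q_{r_+}(z)},
\]
where $r_-<r'_-<r'_+<r_+$ are slightly adjusted radii. A concrete choice is a piecewise linear function of $|x-z|$. Applying ${\bf H_1}$ to $f^\pm_z$ gives $\eps^{d-1}\sum_{s\in S}f^\pm_z(s)\to\sigma\int f^\pm_z\,d\mu$. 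The integrals $\int f^\pm_z\,d\mu$ lie between $\mu(Q_{r_-})$ and $\mu(Q_{r_+})$.

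Next we make this uniform in the center by compactness. We pick a finite $\delta$-net $\{z_1,\dots,z_N\}$ of $\mathbb S^{d-1}$, where $\delta$ is small compared with $\min(r-r'_+,\,r'_--r_-)$. For any $y$, take $z_j$ with $|y-z_j|<\delta$. The triangle inequality gives $Q_{r'_+}(z_j)\subset Q_r(y)$ when $r'_++\delta\le r$, and it gives $Q_r(y)\subset Q_{r'_-}(z_j)$ when $r+\delta\le r'_-$. For this we use the second pair of radii on the outer side, with $r<r'_-$ there. Concretely, we take the inner test function supported in $Q_{r-2\delta}(z_j)$ and the outer one equal to $1$ on $Q_{r+2\delta}(z_j)$. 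This yields
\[
\eps^{d-1}\sum_s f^{\rm in}_j(s)\;\le\;\eps^{d-1}|S\cap Q_r(y)|\;\le\;\eps^{d-1}\sum_s f^{\rm out}_j(s).
\]
Only finitely many test functions appear, so ${\bf H_1}$ makes both outer quantities converge uniformly in $j$. For all small $\eps$ they lie within $\eta$ of $\sigma\int f_j\,d\mu$, and that integral lies within $\sigma\eta$ of $\sigma\mu(Q_r)$, provided $\delta$ is chosen so that $\mu(Q_{r+3\delta})-\mu(Q_{r-3\delta})<\eta$. Since $\eta$ is arbitrary, this proves \eqref{eq:uniform-caps}.

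For the generalization, we take radii $r_\eps$, possibly depending on $y$, with $\sup_y|r_\eps(y)-r|\to0$. For small $\eps$ we have $Q_{r-\delta}(y)\subset Q_{r_\eps}(y)\subset Q_{r+\delta}(y)$ for every $y$. The counts are monotone in the radius, so we apply the fixed-radius result at $r\pm\delta$ and use the continuity of $\mu(Q_\cdot)$ at $r$.

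The only delicate point is the choice of margins $\delta$ and $\eta$, and it is controlled entirely by the continuity of $r\mapsto\mu(Q_r)$. The restriction $r<2$ keeps caps proper, which avoids degenerate behaviour at $r=2$, although even there monotonicity would suffice.
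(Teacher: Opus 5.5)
Your argument is correct and is essentially the paper's: both proofs sandwich $Q_r(y)$ between caps of slightly smaller and larger radius about a nearby fixed center, apply ${\bf H_1}$ to those, and use continuity of $r\mapsto\mu(Q_r)$. The only difference is how uniformity in $y$ is obtained: the paper argues by contradiction along a convergent sequence $y_j\to y$ and applies weak convergence directly to caps with $\mu$-null boundaries, whereas you use a finite $\delta$-net with explicit continuous cutoffs. Your first labeling of the radii $r'_\pm$ is garbled, since it swaps the inner and outer roles. The concrete choice fixes this and makes the proof go through: take the inner cutoff equal to $1$ on $Q_{r-3\delta}(z_j)$ and supported in $Q_{r-2\delta}(z_j)$, and the outer cutoff equal to $1$ on $Q_{r+2\delta}(z_j)$ and supported in $Q_{r+3\delta}(z_j)$.
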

\begin{proof}
If uniform convergence failed, one could choose $\eps_j\downarrow0$ and
$y_j\to y$ for which the discrepancy stays bounded away from zero.  For
every $\eta>0$ and all large $j$,
$Q_{r-\eta}(y)\subset Q_r(y_j)\subset Q_{r+\eta}(y)$.  Apply weak convergence
in ${\bf  H_1} $ to the two fixed caps and then let $\eta\downarrow0$; their
boundaries have zero $\mu$-measure. The assertions for varying radii follow
from the same sandwich argument (uniformly in $y$ in the center-dependent
case).
\end{proof}

\begin{cor} \label{cor:en}
For every fixed $\delta\in(0,1/2)$ and $\tau \in [0,\infty]$, we have
\begin{eqnarray}\label{c13b}
E(\widehat u_A^\eps,\Lambda_\delta(y)) \to \mu(Q_\delta) \varphi_\tau(A)
\quad \mbox{as} \quad \eps \to 0 \,,
\end{eqnarray}
and the convergence is uniform in $A \in [0,1]$ and in $y \in {\mathbb S}^{d-1}$, provided that $\tau<\infty$.
Moreover, for sufficiently small $\eps$ (depending on $\delta$),
\begin{eqnarray}\label{eq:bady3}
E(\widehat u_A^\eps, \Lambda_{\delta}(y))
\ge (1-\delta)^d E(\widehat u_{A_*}^\eps, \Lambda_{\delta}(y))
\quad(A\in[0,1],\ y\in\mathbb S^{d-1}).
\end{eqnarray}
\end{cor}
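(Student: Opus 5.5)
The plan is to split the energy of $\widehat u_A^\eps$ on the cone $\Lambda_\delta(y)$ into the bulk and bump parts, exactly as in \eqref{decomp-ener}, and then use Lemma \ref{lem:uniform-caps} to count how many bumps lie in the cone.

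\textbf{Decomposition and counting.} The gradient of $AU^\eps_\rho$ is supported in $1+\eps<|x|<\rho$ and is radial. Hence its energy in the cone is exactly $\mu(Q_\delta)A^dE(U^\eps_\rho,B(0,\rho))$, which equals $\mu(Q_\delta)A^d\omega_{d-1}(\log\rho)^{1-d}(1+O(\eps))$. This is uniform in $y$ and $A$.

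The bump with anchor $s$ has gradient supported in $B(s,\eps/10)$, and these balls are pairwise disjoint. A bump lies entirely inside $\Lambda_\delta(y)$ whenever $s\in Q_{\delta-\eps}(y)$, and misses the cone whenever $s\notin Q_{\delta+\eps}(y)$. So the bump part of the energy in the cone lies between $(1-A)^d\sum_{s\in Q_{\delta-\eps}(y)}\kappa_s$ and $(1-A)^d\sum_{s\in Q_{\delta+\eps}(y)}\kappa_s$.

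By the bound $(1-C/L_\eps)q_\eps\le\kappa_s\le q_\eps$, by $q_\eps=\omega_{d-1}L_\eps^{1-d}$, and by Lemma \ref{lem:uniform-caps} with radii $r_\eps=\delta\pm\eps\to\delta$, both sums equal $\eps^{1-d}\sigma\mu(Q_\delta)\,q_\eps\,(1+o(1))$ uniformly in $y$. Since $\eps L_\eps\to1/\tau$, this gives $(1-A)^d\mu(Q_\delta)\omega_{d-1}\sigma\tau^{d-1}+o(1)$ uniformly in $A\in[0,1]$ and $y$ when $\tau<\infty$. Adding the two parts yields \eqref{c13b}.

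\textbf{The case $\tau=\infty$.} Here $\eps^{1-d}q_\eps\to\infty$. For $A<1$ the bump part therefore diverges, while at $A=1$ only the bulk term survives. This matches the extended convention for $\varphi_\infty$, with pointwise rather than uniform convergence in $A$.

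\textbf{Proof of \eqref{eq:bady3}.} This is the main point requiring care. For $\tau<\infty$, $A_*$ minimizes $\varphi_\tau$. By uniform convergence,
\[
E(\widehat u_A^\eps,\Lambda_\delta)\ge\mu(Q_\delta)\varphi_\tau(A_*)-o(1)
\quad\text{and}\quad
E(\widehat u_{A_*}^\eps,\Lambda_\delta)\le\mu(Q_\delta)\varphi_\tau(A_*)+o(1),
\]
uniformly in $A$ and $y$. If $\varphi_\tau(A_*)>0$, i.e. $\tau>0$, the multiplicative slack $(1-\delta)^d<1$ absorbs the $o(1)$ errors once $\eps$ is small depending on $\delta$.

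If $\tau=0$, then $A_*=0$ and $\varphi_0(0)=0$, so a multiplicative bound no longer follows from additive errors. I would instead argue directly from the sharp expressions. The bulk part is nonnegative. The bump part of $\widehat u_A^\eps$ equals $(1-A)^d$ times the bump part of $\widehat u_0^\eps$, and the bulk part of $\widehat u_0^\eps$ vanishes. This gives $E(\widehat u_A^\eps,\Lambda_\delta)\ge(1-A)^dE(\widehat u_0^\eps,\Lambda_\delta)$, which is not enough for $A$ near $1$.

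To handle $A$ near $1$, compare the exact counts. The bump energy of $\widehat u_0^\eps$ is comparable to $\eps^{1-d}q_\eps\mu(Q_\delta)\sigma$. The bulk energy of $\widehat u_A^\eps$ is $A^d\mu(Q_\delta)\omega_{d-1}(\log\rho)^{1-d}$, which dominates $\eps^{1-d}q_\eps$ because the latter tends to $0$. So
\[
E(\widehat u_A^\eps,\Lambda_\delta)\ge\bigl(A^d c_1+(1-A)^d\bigr)\,\sigma\mu(Q_\delta)\,\eps^{1-d}q_\eps\,(1-o(1)),
\]
where $c_1=\omega_{d-1}(\log\rho)^{1-d}\big/\bigl(\sigma\eps^{1-d}q_\eps\bigr)\to\infty$. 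On the other side, $E(\widehat u_0^\eps,\Lambda_\delta)\le\sigma\mu(Q_\delta)\eps^{1-d}q_\eps(1+o(1))$, where the counts over $Q_{\delta+\eps}$ and $Q_{\delta-\eps}$ match to relative $o(1)$.

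It remains to minimize $c_1A^d+(1-A)^d$ over $A$. Its minimum is $\bigl(1+c_1^{-1/(d-1)}\bigr)^{1-d}\to1$. Hence the ratio $E(\widehat u_A^\eps,\Lambda_\delta)/E(\widehat u_0^\eps,\Lambda_\delta)\ge1-o(1)\ge(1-\delta)^d$ for small $\eps$.

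The same ratio argument also covers $\tau>0$ and, with $A_*=1$, the case $\tau=\infty$. So I would present this ratio argument uniformly: apply Lemma \ref{lem:uniform-caps} for the relative count errors, and use the explicit one-variable minimization in place of the limit $\varphi_\tau$.
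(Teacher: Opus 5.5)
Your proposal is correct. Your derivation of \eqref{c13b} matches the paper's: disjoint gradient supports, the sandwich between anchors in $Q_{\delta-\eps}(y)$ and in $Q_{\delta+\eps}(y)$, Lemma~\ref{lem:uniform-caps} with radii $\delta\pm\eps$, and $\kappa_s=q_\eps(1+O(L_\eps^{-1}))$. Your proof of \eqref{eq:bady3} for $0<\tau<\infty$, where the multiplicative slack absorbs additive errors because $\varphi_\tau(A_*)>0$, is also the paper's.

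The genuine difference is at the endpoints. For $\tau=0$ the paper splits the range of $A$:
\begin{itemize}
\item For $A\le\delta$ it uses $E(\widehat u_A^\eps,\Lambda_\delta(y))\ge(1-A)^dE(\widehat u_0^\eps,\Lambda_\delta(y))$ with $(1-A)^d\ge(1-\delta)^d$.
\item For $A>\delta$ it notes that the bulk term, at least $\delta^d\mu(Q_\delta)\omega_{d-1}(\log\rho)^{1-d}$, dominates $E(\widehat u_0^\eps,\Lambda_\delta(y))\to0$.
\end{itemize}
For $\tau=\infty$ it argues symmetrically. The bulk term settles $A\ge1-\delta$, and for $A<1-\delta$ the bump term is at least $\delta^d$ times a diverging sum.

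You instead minimize $c\,A^d+(1-A)^d$ explicitly, with $c\to\infty$ (or the roles of the two weights reversed when $\tau=\infty$), and obtain the value $(1+c^{-1/(d-1)})^{1-d}\to1$.

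The trade-off is as follows. The paper's split is shorter and uses only the crude facts that the comparison energy vanishes or diverges, spending the $(1-\delta)^d$ slack to do so. Your computation gives the sharper factor $1-o(1)$, uniformly in $A$ and $y$, and treats both endpoints with a single formula.

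One caveat on your closing remark. For $0<\tau<\infty$ the ratio argument is not independent of the limit. There $A_*$ minimizes only the limit $\varphi_\tau$, not the $\eps$-dependent function, so you still need uniform convergence to $\varphi_\tau$ together with $\varphi_\tau(A_*)>0$, exactly as in your first argument.
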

\begin{proof}
The disjoint supports give
\begin{eqnarray}\label{decomp-ener2}
&&A^dE(U_\rho^\eps,\Lambda_\delta(y))
+(1-A)^d\sum_{s\in S\cap Q_{\delta-\eps}(y)}\kappa_s\nonumber\\
&&\qquad\le E(\widehat u_A^\eps,\Lambda_\delta(y))\nonumber\\
&&\qquad\le A^dE(U_\rho^\eps,\Lambda_\delta(y))
+(1-A)^d\sum_{s\in S\cap Q_{\delta+\eps}(y)}\kappa_s.
\end{eqnarray}
The bulk potential is radial, Lemma \ref{lem:uniform-caps} controls the two
anchor counts uniformly in $y$, and \eqref{def:qkappa} gives
$\kappa_s=q_\eps(1+O(L_\eps^{-1}))$ uniformly in $s$.  These facts prove
\eqref{c13b}.

If $0<\tau<\infty$, \eqref{eq:bady3} follows from uniform convergence and
the fact that $\varphi_\tau$ is minimized at $A_*$.  If $\tau=0$, then
$A_*=0$.  For $A\le\delta$, the bump term in \eqref{decomp-ener2} is at
least $(1-\delta)^d$ times the bump term for $A=0$; for $A>\delta$, the
positive bulk term dominates the vanishing right-hand side when $\eps$ is
small.  Finally, if $\tau=\infty$, then $A_*=1$.  The bulk term proves the
claim when $A\ge1-\delta$, while for $A<1-\delta$ the bump term diverges
uniformly as $\eps\downarrow0$.
\end{proof}

\section{Bounds on oscillations and energy estimates} \label{heart}
The next three local lemmas are \cite[Lemmas~3.1--3.3]{GNP}.  Their proofs
use only local $p$-harmonic estimates and the Besicovitch covering theorem,
not the restriction $p<d$ imposed elsewhere in that paper; hence they remain
valid here with $p=d$.  In what follows, positive constants depend only on $d$.
\begin{lemma}[{\cite[Lemma~3.1]{GNP}}] \label{l:1}
Suppose that for some $r>0$ and $z \in B(0,\rho)$ we have
\begin{eqnarray}\label{a4}
B(z,5r/4)\subset B(0,\rho) \setminus \Gamma
\end{eqnarray}
and for some
$\lambda>0, \beta \ge 0$, the solution $u$ of \eqref{eq:2} satisfies
\begin{eqnarray}\label{a3}
E(u, B(z,5r/4)) \le \lambda^d r^{d-1-\beta} \,.
\end{eqnarray}
Then,
\begin{eqnarray}\label{a5}
\underset{ B(z,r)} {\rm osc}~ u  \le C_1 \lambda r^\frac{d-1-\beta}{d} \,,
\end{eqnarray}
where
\begin{eqnarray}\label{a5a}
\underset{ D} {\rm osc}~ u:=\sup_{D} u-\inf_{D} u
\end{eqnarray}
stands for the  oscillation over the set $D$.
\end{lemma}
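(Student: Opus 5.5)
Since $u$ is $d$-harmonic in $B(z,5r/4)$, which avoids $\Gamma$, interior regularity applies there, and the estimate \eqref{a5} is really a scale-invariant Morrey-type bound. The exponent $(d-1-\beta)/d$ must come from comparing the given energy $\lambda^d r^{d-1-\beta}$ with the borderline quantity $r^0$. Indeed, for $p=d$ the energy $E(u,B)$ is dimensionless, and the natural estimate is
\[
{\rm osc}_{B(z,r)}u\le C\,E(u,B(z,5r/4))^{1/d}.
\]
Applying \eqref{a3} gives ${\rm osc}\le C\lambda r^{(d-1-\beta)/d}$, which is exactly \eqref{a5}. The plan is therefore to prove this dimensionless bound.

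First rescale. Set $w(x)=u(z+rx)$, which is $d$-harmonic in $B(0,5/4)$ and satisfies $E(w,B(0,5/4))=E(u,B(z,5r/4))$, because the $d$-energy is invariant under dilations. It then suffices to show
\[
{\rm osc}_{B(0,1)}w\le C_d\,E(w,B(0,5/4))^{1/d}.
\]

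Next, use the interior sup estimate for $p$-harmonic functions (De Giorgi–Moser local boundedness; see \cite{Lindq} or \cite[Chapter 3]{hkm}). Applied to $w-\bar w$, where $\bar w$ is the mean of $w$ over $B(0,9/8)$, it gives
\[
\sup_{B(0,1)}|w-\bar w|\le C\Bigl(\fint_{B(0,9/8)}|w-\bar w|^d\Bigr)^{1/d}.
\]
The Poincaré inequality on $B(0,9/8)$ then bounds the right-hand side by $C\|\nabla w\|_{L^d(B(0,9/8))}\le C\,E(w,B(0,5/4))^{1/d}$. The oscillation is at most twice this supremum, which proves the claim.

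An alternative route avoids the sup estimate. For $p=d$ one can use the classical Lebesgue oscillation lemma, which holds for monotone functions, and $d$-harmonic functions are monotone in Lebesgue's sense by the maximum principle. Integrating over spheres $\partial B(0,t)$ with $t\in(1,5/4)$ and applying the Sobolev embedding on spheres, $W^{1,d}(\mathbb S^{d-1})\subset C^{0,1/d}$ since $d>d-1$, we get ${\rm osc}_{\partial B_t}w\le C\,t\,\|\nabla w\|_{L^d(\partial B_t)}$. Averaging in $t$ and using the maximum principle, which gives ${\rm osc}_{B_1}\le{\rm osc}_{\partial B_t}$, yields the same bound with constant $C_d$. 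Either route works; I would present the second, since it is elementary and keeps the constants dependent only on $d$.

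The main obstacle is bookkeeping rather than substance. One has to check that the constant depends only on $d$, which follows from the scaling. One also has to handle $\beta$: it enters only through the hypothesis and so plays no role in the argument itself, but when $\beta>d-1$ the bound is trivially true for small $r$ and it is worth noting that no separate case is needed.
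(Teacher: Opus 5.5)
Your argument is correct. Both routes rest on the reduction the lemma needs: for $p=d$ the energy is dilation invariant. After rescaling to $r=1$, the statement becomes the dimensionless bound $\mathrm{osc}_{B(0,1)}w\le C_d\,E(w,B(0,5/4))^{1/d}$, and the exponent $(d-1-\beta)/d$ enters only through the hypothesis.

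The paper gives no proof of its own. It imports \cite[Lemma~3.1]{GNP} and notes only that the proof uses local $p$-harmonic estimates; that is the kind of argument in your first route (interior sup bound plus Poincar\'e), where the general-$p$ factor $r^{p-d}$ drops out at $p=d$.

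The route you say you would present is genuinely different. It is Lebesgue's oscillation lemma. The comparison principle (Lemma~\ref{l:comp}) gives $\mathrm{osc}_{B_1}w\le\mathrm{osc}_{\partial B_t}w$. Morrey's embedding on the $(d-1)$-dimensional sphere bounds the right side by $\|\nabla w\|_{L^d(\partial B_t)}$. Integrating $(\mathrm{osc}_{B_1}w)^d\le C\int_{\partial B_t}|\nabla w|^d$ over $t\in(1,5/4)$ finishes the proof.

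What this buys you: no De Giorgi--Moser iteration or Poincar\'e inequality, and a transparent constant depending only on $d$. What it costs: it works only because $p=d>d-1$. For $p\le d-1$ the spherical embedding fails, whereas the local-regularity proof is valid for every $1<p<\infty$. That generality is why the subcritical lemma of \cite{GNP} can be quoted verbatim here.

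Three small corrections:
\begin{itemize}
\item On $\partial B_t$ the Morrey factor is $t^{1/d}$, not $t$; this is harmless for $t\in(1,5/4)$.
\item You should say why $w|_{\partial B_t}\in W^{1,d}(\partial B_t)$ with tangential gradient at most $|\nabla w|$. Either cite $C^{1,\alpha}$ regularity of $d$-harmonic functions, or use the ACL argument of Lemma~\ref{l:holder} for a.e.\ $t$ together with the continuity of $w$.
\item Your remark about $\beta>d-1$ is unnecessary: the dimensionless bound covers every $\beta\ge0$ at once.
\end{itemize}
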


\smallskip

\begin{definition}\label{defbad}
Fix $\beta =\frac{d-1}{2d}$.  Given $\delta<1/20$ and $\eps<\delta/20$, an anchor $s \in S$ will be called  a {\bf good anchor} if
\begin{eqnarray}
E(u,\Lambda_{\zeta}(s))\le \zeta^{d-1-\beta},\qquad \forall \zeta\in[\eps,\delta] \,.
\end{eqnarray}
Otherwise, $s$ will be called a {\bf bad anchor}.
\end{definition}

\begin{lemma}[{\cite[Lemma~3.2]{GNP}}]\label{l:2}

Fix $\eps,\delta,\beta$ as in Definition \ref{defbad}, and assume additionally
that
\[
\delta<\frac{\rho-1}{4}.
\]
Suppose $s$ is a good anchor and let
\begin{eqnarray} \label{eq:star}
\Lambda_{\eps}^*(s):= \Bigl\{ x\in \Lambda_{\eps/2}(s)\setminus B\left(s,\frac{\eps}{10}\right) : 1-\delta \le \vert x \vert \le 1+\delta \Bigr\} \,.
\end{eqnarray}
Then there exists $C_5$ such that
\begin{eqnarray}
\underset{\Lambda_{\eps}^*(s) } {\rm osc} ~ u \le C_5 \delta^\frac{d-1-\beta}{d} \,.
\end{eqnarray}
\end{lemma}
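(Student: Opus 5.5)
The plan is to cover $\Lambda^*_\eps(s)$ by a bounded chain of balls on which Lemma \ref{l:1} applies, and to sum the oscillation bounds. The energy needed for Lemma \ref{l:1} at the appropriate scales comes from the good-anchor condition, applied with $\zeta$ comparable to the ball radius.

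\emph{Decomposition.} We split $\Lambda^*_\eps(s)$ into the near part, where $|x-s|\lesssim\eps$, and the far radial part, where $\eps\lesssim\big||x|-1\big|\le\delta$.

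\emph{Near part.} Points with $\eps/10\le|x-s|$ and $x\in\Lambda_{\eps/2}(s)$, $\big||x|-1\big|\le c\eps$, stay at distance $\gtrsim\eps$ from every cavity. Indeed, $s$ is a cavity center, the cavities have diameter at most $\alpha\eps\le\eps/80$, and other anchors lie at distance $\ge\eps$ from $s$. We can therefore cover this region by $N(d)$ balls $B(z,r)$ with $r\asymp\eps$, $B(z,5r/4)\subset\Lambda_{2\eps}(s)\setminus\Gamma$, and with the balls forming a connected chain. Applying the good-anchor bound with $\zeta=2\eps$ (or $\zeta=\eps$, after adjusting constants) gives $E(u,B(z,5r/4))\le C\eps^{d-1-\beta}\le C r^{d-1-\beta}$. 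Lemma \ref{l:1} with $\lambda=C$ then yields an oscillation bound $\lesssim\eps^{(d-1-\beta)/d}$ on each ball. Chaining the $N(d)$ balls gives a total bound of $C\eps^{(d-1-\beta)/d}\le C\delta^{(d-1-\beta)/d}$.

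\emph{Far part.} We use dyadic shells. For $k\ge0$ with $\zeta_k=2^k\eps\le\delta$, let $D_k=\{x\in\Lambda_{\eps/2}(s):\zeta_k\le\big||x|-1\big|\le2\zeta_k\}$, separately for the inner side and the outer side. Each $D_k$ is contained in a ball $B(z_k,r_k)$ with $r_k\asymp\zeta_k$ and $B(z_k,5r_k/4)\subset\Lambda_{C\zeta_k}(s)$ at distance $\gtrsim\zeta_k$ from the sphere, hence away from $\Gamma$. This holds because $\Gamma\subset\{\big||x|-1\big|\le\alpha\eps\}$. The requirement $\delta<(\rho-1)/4$ keeps these balls inside $B(0,\rho)$, and $\delta<1/20$ keeps them away from the origin. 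Radii of $D_k$ slightly larger than $\zeta_k$ can be handled with a fixed number of balls. The good-anchor condition at $\zeta=C\zeta_k$ requires $C\zeta_k\le\delta$; the top $O(1)$ shells, where this fails, can be treated by using $\zeta=\delta$ together with $\zeta_k\asymp\delta$. We obtain $E(u,B(z_k,5r_k/4))\le C\zeta_k^{d-1-\beta}$, and Lemma \ref{l:1} gives $\mathrm{osc}_{D_k}u\le C\zeta_k^{(d-1-\beta)/d}$. Consecutive shells overlap, and the innermost shell meets the near part. Summing the resulting geometric series gives
\[
\sum_{k:\,\zeta_k\le\delta}C(2^k\eps)^{\frac{d-1-\beta}{d}}\le C\delta^{\frac{d-1-\beta}{d}},
\]
since $(d-1-\beta)/d>0$. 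Adding the oscillations along the chain from the outer part through the near part to the inner part gives the claimed bound with a constant $C_5=C_5(d)$.

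\emph{Main obstacle.} The delicate step is the geometric bookkeeping. We must check that each enlarged ball $B(z,5r/4)$ avoids every cavity, including those of neighboring anchors. We must also check that it sits inside a cone $\Lambda_\zeta(s)$ with $\zeta\in[\eps,\delta]$ and $\zeta\asymp r$, so that the good-anchor condition applies. Finally, for cones with aperture much larger than $\eps/2$, balls of radius $\asymp\zeta_k$ centered on the axis still fit inside them; since $\Lambda_{\zeta}(s)$ is a cone through the origin, its width at radius $\approx1$ is $\asymp\zeta$, and this is what makes the balls fit.
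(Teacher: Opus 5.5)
Your proposal is correct and is essentially the argument the paper imports from \cite[Lemma~3.2]{GNP}; the paper itself gives no proof, only noting that the cited proofs use local estimates alone. That argument consists of $O(1)$ balls of radius $\asymp\eps$ around the excised ball $B(s,\eps/10)$, a dyadic chain of axis-centered balls at distances $2^k\eps\le\delta$ from the sphere on both sides, energy on each enlarged ball bounded by the good-anchor condition at a matching scale $\zeta\asymp r$, Lemma~\ref{l:1} on each ball, and a geometric sum dominated by the scale $\delta$.

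The one loose phrase is your treatment of the top shells. Applying $\zeta=\delta$ to a ball of radius $\asymp\zeta_k$ that may stick out of $\Lambda_\delta(s)$ does not control its energy. Instead, split those shells into $O(1)$ axis-centered balls of radius $c\delta$. These lie in $\Lambda_\delta(s)$, stay a distance $\gtrsim\delta$ from $\Gamma$, and still cover the cross-section of the tube, since $\eps<\delta/20$.
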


\begin{lemma}[{\cite[Lemma~3.3]{GNP}}]\label{l:3}
Fix $\eps$, $\delta,$ $\beta$ as in Definition \ref{defbad}, and let $y$ be a point on the unit sphere   ${\mathbb S}^{d-1}$.  Suppose that
\begin{equation} \label{energy3}
 E(u,\Lambda_{2\delta}(y)) \le M\delta^{d-1}
\end{equation}
for some $M>0$. Then the set $S_\beta$ of bad anchors satisfies
$|S_\beta \cap Q_\delta(y)| \le C_7 M\delta^\beta (\delta/\eps)^{d-1}$,
where $C_7=C_7(d)$.
\end{lemma}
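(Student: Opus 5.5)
The statement to prove is Lemma \ref{l:3}: under the local energy bound \eqref{energy3}, the number of bad anchors in $Q_\delta(y)$ is at most $C_7 M\delta^\beta(\delta/\eps)^{d-1}$. I would prove it by a Vitali/Besicovitch covering argument on the sphere that charges each bad anchor to a disjoint piece of the energy in $\Lambda_{2\delta}(y)$.

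\emph{Step 1: witnessing cones.} For each bad anchor $s\in S_\beta\cap Q_\delta(y)$, Definition \ref{defbad} gives a radius $\zeta_s\in[\eps,\delta]$ with
\[
E(u,\Lambda_{\zeta_s}(s))>\zeta_s^{d-1-\beta}.
\]
Since $|s-y|<\delta$ and $\zeta_s\le\delta$, we have $Q_{\zeta_s}(s)\subset Q_{2\delta}(y)$. Hence each witnessing cone $\Lambda_{\zeta_s}(s)$ lies inside $\Lambda_{2\delta}(y)$.

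\emph{Step 2: disjoint subfamily.} Apply the Besicovitch covering theorem on $\mathbb S^{d-1}$ (or the Vitali $5r$-lemma, since the radii are bounded by $\delta$) to the caps $\{Q_{\zeta_s}(s)\}$. With Besicovitch we get subfamilies $\mathcal F_1,\dots,\mathcal F_{N_d}$. Each $\mathcal F_j$ consists of pairwise disjoint caps, and together they cover all bad anchors in $Q_\delta(y)$. With Vitali we get a single disjoint family whose fivefold enlargements cover. Disjoint caps give disjoint cones, so for each $j$,
\[
\sum_{s\in\mathcal F_j}\zeta_s^{d-1-\beta}<\sum_{s\in\mathcal F_j}E(u,\Lambda_{\zeta_s}(s))\le E(u,\Lambda_{2\delta}(y))\le M\delta^{d-1}.
\]

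\emph{Step 3: counting anchors in a cap.} The anchors are $\eps$-separated, so a packing argument bounds the number of anchors in a cap $Q_{c\zeta}(s)$ with $\zeta\ge\eps$ by $C_d(\zeta/\eps)^{d-1}$. Therefore
\[
|S_\beta\cap Q_\delta(y)|\le C\sum_j\sum_{s\in\mathcal F_j}(\zeta_s/\eps)^{d-1}
=C\eps^{1-d}\sum_j\sum_{s\in\mathcal F_j}\zeta_s^{\beta}\,\zeta_s^{d-1-\beta}.
\]
Using $\zeta_s\le\delta$, this is at most
\[
C\eps^{1-d}\delta^\beta N_d M\delta^{d-1}=C_7M\delta^\beta(\delta/\eps)^{d-1}.
\]

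\emph{Main obstacle.} The delicate point is the covering step. Besicovitch applies on the sphere for caps whose radii are small relative to the curvature, which holds here since $\delta<1/20$. One must also check that the cones over disjoint caps are disjoint and stay inside $\Lambda_{2\delta}(y)$. If Vitali is used instead, the anchor count on enlarged caps $Q_{5\zeta_s}(s)$ changes only the constant. No property of $p<d$ enters, consistent with the remark that the lemma carries over verbatim from \cite{GNP}.
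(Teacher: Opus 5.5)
Your proof is correct. It follows essentially the same route as the paper, which does not reprove the lemma but cites \cite[Lemma~3.3]{GNP} and notes that its proof uses the Besicovitch covering theorem; your steps (witnessing caps $Q_{\zeta_s}(s)\subset Q_{2\delta}(y)$, disjoint cones in each Besicovitch subfamily summed against $M\delta^{d-1}$, the packing bound $C(\zeta_s/\eps)^{d-1}$, and $\zeta_s^\beta\le\delta^\beta$) are that argument. One small simplification: since $Q_\zeta(s)=B(s,\zeta)\cap\mathbb S^{d-1}$, you can apply Besicovitch directly to the Euclidean balls $B(s,\zeta_s)$ in $\mathbb R^d$, so no condition relating the radii to the curvature of the sphere is needed.
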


\section{A local energy lower bound}

\begin{lemma}\label{l:4}
 Let $\beta:=(d-1)/(2d)$. There exist
$\delta_0<\min\{1/20,(\rho-1)/4\}$ and $C>0$, depending only on
$d,\sigma,m_0,$ and $\rho$, such that for every $\delta\in(0,\delta_0)$
there exists $\eps_0=\eps_0(\delta)>0$ with the following property. For each
$\eps\in(0,\eps_0)$ and $y\in\mathbb S^{d-1}$ such that $u=u^\eps$ satisfies
\begin{eqnarray}\label{eq4:1}
E(u,\Lambda_{2\delta}(y)) \le \delta^{-\beta/2} \delta^{d-1} \,,
\end{eqnarray}
there exists some $A=A(y,\delta,\eps)$ such that
\begin{eqnarray}\label{eq4:5}
|u(x)-A| \le C \delta^\beta, \quad \forall x\in \Lambda_{\delta}(y):  |x|=1\pm\delta \,,
\end{eqnarray}
and
\begin{eqnarray}\label{eq4:2}
E(u,\Lambda_{\delta}(y)) \ge (1- 3\delta^{\beta/3}) E(\widehat u^{\eps}_{A},\Lambda_{\delta}(y))\,.
\end{eqnarray}
\end{lemma}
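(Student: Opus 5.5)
The plan mirrors the proof of the corresponding lemma in \cite{GNP}, with Lemma \ref{l:holder} supplying the logarithmic radial bound. The energy of $u$ in $\Lambda_\delta(y)$ splits into three parts: the inner cone ($|x|<1-\delta$), the outer cone ($|x|>1+\delta$), and the shell $1-\delta\le|x|\le1+\delta$. The last part is further divided into the small neighbourhoods $B(s,\eps/10)$ of the anchors $s\in S\cap Q_{\delta}(y)$. In each piece we bound the energy from below by the matching piece of $\widehat u^\eps_A$. The constant $A$ is an average of $u$ over the good anchors' regions $\Lambda_\eps^*(s)$.

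\emph{Step 1 (choice of $A$ and \eqref{eq4:5}).} Hypothesis \eqref{eq4:1} is \eqref{energy3} with $M=\delta^{-\beta/2}$. Lemma \ref{l:3} then gives $|S_\beta\cap Q_\delta(y)|\le C_7\delta^{\beta/2}(\delta/\eps)^{d-1}$, so bad anchors form a fraction $O(\delta^{\beta/2})$ of all anchors in the cap. By Lemma \ref{l:2}, $u$ oscillates by at most $C_5\delta^{(d-1-\beta)/d}\le C\delta^\beta$ on each good $\Lambda^*_\eps(s)$. This set reaches the spheres $|x|=1\pm\delta$ and has width comparable to $\eps$, so the good sets tile the cap up to holes of size $O(\eps)$.

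Oscillation control across neighbouring good anchors is needed to make $u$ nearly constant on the whole shell slice. One route is to apply Lemma \ref{l:1} on balls of radius $\sim\delta$ centred on $|x|=1\pm\delta$, which avoid $\Gamma$. Equation \eqref{eq4:1} gives these balls energy at most $\delta^{d-1-\beta/2}$, which yields oscillation $\le C\delta^{(d-1-\beta/2)/d}$. If $\beta$ is arranged so that this is $\le C\delta^\beta$, we obtain \eqref{eq4:5} with $A$ equal to the value of $u$ at one point of $\{|x|=1+\delta\}\cap\Lambda_\delta(y)$. Comparing the two spheres goes through the good $\Lambda^*_\eps(s)$, each of which meets both.

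\emph{Step 2 (bulk energy).} On the outer cone, Lemma \ref{l:holder} with $R=1+\delta$ and $\tilde A=A-C\delta^\beta$ gives $E\ge (A-C\delta^\beta)^d\mu(Q_\delta)\omega_{d-1}/(\log(\rho/(1+\delta)))^{d-1}$. This matches $A^dE(U^\eps_\rho,\Lambda_\delta(y))$ up to factors $1-O(\delta^{\beta})$ when $A\gg\delta^\beta$. When $A\lesssim\delta^{\beta/3}$, the bulk term of $\widehat u^\eps_A$ is itself negligible, $O(\delta^{d\beta/3})$ relative to the total. The inner cone contributes nonnegatively, so it can be dropped. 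This is consistent with the ansatz, whose gradient vanishes there apart from the bumps.

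\emph{Step 3 (cavity energy).} For a good anchor $s$, $u=1$ on $s+\alpha\eps K_s$ and $u\le A+C\delta^\beta$ on $\partial B(s,\eps/10)$, a sphere contained in $\Lambda^*_\eps(s)$ up to the shell cut. After truncation and rescaling, $(u-A-C\delta^\beta)_+/(1-A-C\delta^\beta)$ is admissible for $K_s$ in $B(0,1/(10\alpha))$. Conformal invariance and Lemma \ref{lem:variational} then give
\[
E(u,B(s,\eps/10))\ge (1-A-C\delta^\beta)^d\kappa_s .
\]
Summing over good anchors and discarding bad ones costs a factor $1-O(\delta^{\beta/2})$ by the counting of Step 1. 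The anchors near the cap boundary cost $O(\eps/\delta)$, and this is handled as in Corollary \ref{cor:en}.

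Combining Steps 2 and 3 with \eqref{decomp-ener2} gives
\[
E(u,\Lambda_\delta(y))\ge(1-C\delta^{\beta/2}-C\delta^\beta/\min(A,1-A))\,E(\widehat u^\eps_A,\Lambda_\delta(y)).
\]
The main obstacle is making this multiplicative uniformly when $A$ or $1-A$ is comparable to $\delta^\beta$. There I would shift $A$ to $0$ or $1$, whichever is within $\delta^{\beta/3}$ (this keeps \eqref{eq4:5}). The lost bulk or bump term is then a fraction $\le\delta^{\beta/3}$ of the ansatz energy, using $\varphi_\tau$ bounds from Corollary \ref{cor:en}, and this yields the constant $3\delta^{\beta/3}$. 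The other delicate point is the oscillation exponent in Step 1, which is where the choice $\beta=(d-1)/(2d)$ enters.
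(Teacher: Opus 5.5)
Your architecture matches the paper's. You apply Lemma~\ref{l:1} on boundedly many balls of radius $\sim\delta$ on the caps $|x|=1\pm\delta$, and your check $(d-1-\beta/2)/d>\beta$ is correct. You use a good anchor and Lemma~\ref{l:2} to tie the two caps together, truncate on $B(s,\eps/10)$ for good anchors while discarding bad and boundary anchors via Lemma~\ref{l:3} and a shell count, and apply Lemma~\ref{l:holder} on the outer cone.

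The gap is exactly at the point you flag as the main obstacle, and your proposed remedy does not work. Replacing $A$ by $0$ or $1$ whenever it lies within $\delta^{\beta/3}$ of it moves $A$ by up to $\delta^{\beta/3}$. But $u$ stays within $C\delta^\beta$ of the original value on the caps. If, say, $u\approx\frac12\delta^{\beta/3}$ there, the new value $A=0$ violates \eqref{eq4:5}, since $\delta^{\beta/3}\gg\delta^\beta$. So your parenthetical claim that the shift keeps \eqref{eq4:5} is false.

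If instead you shift only when $\min(A,1-A)\le K\delta^\beta$, the range $K\delta^\beta\ll\min(A,1-A)\ll\delta^{2\beta/3}$ is left uncovered. There your factor $1-C\delta^\beta/\min(A,1-A)$ is worse than $1-3\delta^{\beta/3}$. The additive loss also cannot be absorbed by the other part of the ansatz energy, because nothing makes that part dominant:
\begin{itemize}
\item Your Step~2 claim that the bulk term is negligible relative to the total when $A\lesssim\delta^{\beta/3}$ fails for $\tau=0$. Then $\sum_s\kappa_s\asymp\mu(Q_\delta)(\eps L_\eps)^{1-d}\to0$ as $\eps\to0$ with $\delta$ fixed.
\item Symmetrically, for $\tau=\infty$ the loss in the bump term near $A=1$ swamps the bulk term.
\item Invoking $\varphi_\tau$ would in any case make $\delta_0$ depend on $\tau$, which the statement does not allow.
\end{itemize}

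The missing idea is to use the $O(\delta^\beta)$ freedom in $A$ one-sidedly, so that one of the two comparisons is exact and the other loses only a multiplicative factor. Let $A_+$ satisfy $|u-A_+|\le C\delta^\beta$ on both caps. Let $A_1=\min\{1,A_++C\delta^\beta\}$, which bounds $u$ from above on $\partial B(s,\eps/10)$ for good $s$.
\begin{itemize}
\item If $A_+>1/2$, take $A=A_1$. Then $(u-A)_+/(1-A)$ is admissible with no loss in the bump term. On the outer cap, $u\ge A-2C\delta^\beta\ge(1-4C\delta^\beta)A$ because $A\ge1/2$, so Lemma~\ref{l:holder} loses only a factor $1-O(\delta^\beta)$.
\item If $A_+\le1/2$, take $A=\max\{0,A_+-C\delta^\beta\}$. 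Then $u\ge A$ on the outer cap, so the bulk bound is exact. Meanwhile $1-A_1\ge(1-A)(1-4C\delta^\beta)$ because $1-A\ge1/2$.
\end{itemize}
Both choices lie within $C\delta^\beta$ of $A_+$, so \eqref{eq4:5} holds with constant $2C$. The resulting multiplicative loss, together with the anchor-counting losses, fits inside $1-3\delta^{\beta/3}$ with $\delta_0$ independent of $\tau$.
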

\begin{proof}
Since $\beta=(d-1)/(2d)$,
\[
\frac{d-1-\beta}{d}-\beta=\frac{(d-1)^2}{2d^2}>0.
\]
Thus $(d-1-\beta)/d>\beta$, a fact used repeatedly below.
 
Also, recall the following consequence of the definition \eqref{eq:2da3hat} of $\widehat u_A^\eps$: for every fixed $\delta>0$ and sufficiently small $\eps$,
\begin{eqnarray} \label{decomp-ener2.5}
&& A^d E(U_\rho^\eps, \Lambda_{\delta}(y))+(1-A)^d\sum_{s\in S\cap Q_{\delta-\eps}(y)}\kappa_s \nonumber\\
&&\le E(\widehat u_A^\eps, \Lambda_{\delta}(y))\le A^d E(U_\rho^\eps, \Lambda_{\delta}(y)) +(1-A)^d \sum_{s \in S \cap Q_{\delta+\eps}(y)} \kappa_s \, .
\end{eqnarray}
Here
\begin{eqnarray} \label{decomp-ener3.5}
\kappa_s:=  {\rm cap}_d\left(s+\alpha \eps K_s, B\left(s,\frac{\eps}{10}\right)\right)
= {\rm cap}_d\left(K_s, B\left(0,\frac{1}{10\alpha}\right)\right) \,.
\end{eqnarray}
The estimates in the subsection on solutions in large balls imply, uniformly in $s$,
\begin{eqnarray}\label{kappa-uniform}
\left(1-\frac{C}{\log(1/(10\alpha))}\right)\frac{\omega_{d-1}}{(\log(1/(10\alpha)))^{d-1}}
\le \kappa_s \le
\frac{\omega_{d-1}}{(\log(1/(10\alpha)))^{d-1}}.
\end{eqnarray}
\medskip

Cover each of the two caps
$\Lambda_\delta(y)\cap\partial B(0,1\pm\delta)$ by a number bounded in terms
of $d$ of balls of radius $c_d\delta$, chosen so that their $5/4$ dilates lie
in $\Lambda_{2\delta}(y)\setminus\Gamma$.  Applying Lemma \ref{l:1} in this
finite overlapping chain and using \eqref{eq4:1} gives
\begin{eqnarray}\label{eq4:3}
\underset{  \Lambda_{\delta}(y)\cap \partial B(0,1+\delta)} {\rm osc}~ u \le C\delta^\frac{d-1-\beta}{d} \le C\delta^{\beta} \,.
\end{eqnarray}

 Similarly,
\begin{eqnarray}\label{eq4:31}
\underset{  \Lambda_{\delta}(y)\cap \partial B(0,1-\delta)} {\rm osc}~ u \le   C\delta^{\beta} \,.
\end{eqnarray}

  Hence,
\begin{eqnarray}\label{eq4:4}
u(x)=A_{\pm}+ O(\delta^\beta), \quad \forall x\in \Lambda_{\delta}(y)\ \mbox{with } |x|=1\pm\delta,
\end{eqnarray}
where $A_+=A_+(y)$ and $A_-=A_-(y)$ are two constants.
In view of Lemma \ref{l:3} with $M=\delta^{-\beta/2}$, if $\delta_0>0$ is small enough and $\delta \in (0,\delta_0)$, then for sufficiently small $\eps>0$, there exists a good anchor in $Q_{\delta}(y)$, and therefore by Lemma \ref{l:2} and \eqref{eq4:4}, we have $|A_+-A_-|\le C\delta^\beta.$
Thus, for some $C_{\mathrm{osc}}>0$,
\begin{eqnarray}\label{eq4:51}
|u(x)-A_+| \le C_{\mathrm{osc}}\delta^\beta,
\quad \forall x\in \Lambda_{\delta}(y): |x|=1\pm\delta \,.
\end{eqnarray}

\medskip
Denote by $Q_{\delta-\eps}^{\sharp}(y)=(S\setminus S_\beta)\cap
Q_{\delta-\eps}(y)$ the set of good anchors in $Q_{\delta-\eps}(y)$.
Lemma \ref{l:2} and \eqref{eq4:51} imply that there exists
$C_{\mathrm{up}}>0$ such that, for every
$s\in Q_{\delta-\eps}^{\sharp}(y)$,
\begin{eqnarray}\label{eq4:6}
u(x)\le A_1:=\min\{1,A_++C_{\mathrm{up}}\delta^\beta\},
\qquad \forall x\in\partial B\left(s,\frac{\eps}{10}\right).
\end{eqnarray}
Define
\[
A:=
\begin{cases}
A_1, & A_+>1/2,\\
\max\{0,A_+-C_{\mathrm{osc}}\delta^\beta\}, & A_+\le1/2.
\end{cases}
\]
Decrease $\delta_0$, if necessary, so that
$C_{\mathrm{up}}\delta^\beta<1/4$. Then $A=1$ if and only if $A_1=1$.
If $A_+\le1/2$, then $1-A\ge1/2$ and
\[
A_1-A\le(C_{\mathrm{up}}+C_{\mathrm{osc}})\delta^\beta.
\]
Consequently,
\[
1-A_1\ge(1-A)\bigl(1-2(C_{\mathrm{up}}+C_{\mathrm{osc}})
\delta^\beta\bigr).
\]
If $A_+>1/2$, then $A=A_1$. Since
$\delta^\beta=o(\delta^{\beta/3})$, after decreasing $\delta_0$ once more,
\begin{eqnarray}\label{compa}
(1-A_1)^d\ge(1-\delta^{\beta/3})(1-A)^d
\end{eqnarray}
for all values of $A_+$.
 
 Recall the definition of $\kappa_s$ in \eqref{decomp-ener3.5} and the uniform estimate \eqref{kappa-uniform}.

Now that $A$ has been chosen, our next goal will be to prove that if $\delta$ is small enough, then the inequality
\begin{eqnarray}\label{eq4:9}
E\left(u, \Lambda_{\delta}(y)\cap B(0,1+\delta) \right) \ge (1-2\delta^{\beta/3}) (1-A)^d \sum_{s \in S \cap Q_{\delta+\eps}(y)} \kappa_s
\end{eqnarray}
holds for  sufficiently small $\eps$. Note that if $A_1=1$ then also $A=1$,
 so the right-hand side of \eqref{eq4:9} vanishes, and the inequality certainly holds; thus we need only prove \eqref{eq4:9} when $A_1<1$.

For $s\in Q_{\delta-\eps}^{\sharp}(y)$, define
\[
u_s(x):=\max\left\{0,\frac{u(x)-A_1}{1-A_1}\right\}.
\]
The function $u_s$ has zero Sobolev trace on $\partial B(s,\eps/10)$ by
\eqref{eq4:6} and equals $1$ $d$-quasieverywhere on
$s+\alpha\eps K_s$. It is therefore
admissible for the relative capacity in Lemma \ref{lem:variational}. Hence,
$$\forall s \in Q_{\delta-\eps}^{\sharp}(y), \qquad  \kappa_s\le E \Bigl(u_s, \, B\left(s,\frac{\eps}{10}\right)\Bigr)\le
(1-A_1)^{-d} E\Bigl(u, B\left(s,\frac{\eps}{10}\right)\Bigr) \,.$$

Therefore,
\begin{eqnarray}\label{eq4:8}
E\left(u,  \Lambda_{\delta}(y)\cap B(0,1+\delta) \right) \ge
\sum_{s\in  Q_{\delta-\eps}^{\sharp}(y)} E\left(u, B\left(s,\frac{\eps}{10}\right)\right)  \ge (1-A_1)^{d}\sum_{s\in  Q_{\delta-\eps}^{\sharp}(y)}    \kappa_s\, .
\end{eqnarray}

Next, we will compare the right-hand sides of \eqref{eq4:8} and \eqref{eq4:9}.
Lemma \ref{lem:uniform-caps} implies, uniformly in $y$, that for $\eps$ sufficiently small,
\begin{eqnarray}\label{eq:kid}
|S \cap Q_{\delta}(y)| \ge (1-\delta)\sigma \eps^{1-d} \mu( Q_{\delta}) \ge C \sigma  (\delta/\eps)^{d-1} \,.
\end{eqnarray}
 Invoking Lemma \ref{l:3} with $M=\delta^{-\beta/2}$ yields that
\begin{eqnarray}\label{eq:kid1}
|S_\beta \cap Q_{\delta}(y)| \le C  \delta^{\beta/2} (\delta/\eps)^{d-1}\,.
\end{eqnarray}
The $\mu$-measure of the shell
$Q_{\delta+2\eps}(y)\setminus Q_{\delta-2\eps}(y)$ is
$O(\eps\delta^{d-2})$. Since the caps of radius $\eps/2$ centered at the
anchors are pairwise disjoint and contained in this larger shell, the number
of anchors in $Q_{\delta+\eps}(y)\setminus Q_{\delta-\eps}(y)$ is
\[
O\bigl((\delta/\eps)^{d-2}\bigr).
\]
By decreasing $\eps_0(\delta)$, if necessary, we may assume
$\eps\le\delta^{1+\beta/2}$. Hence
\[
O\bigl((\delta/\eps)^{d-2}\bigr)
\le C\delta^{\beta/2}(\delta/\eps)^{d-1}.
\]
Thus, for sufficiently small $\eps$,
\begin{eqnarray}\label{eq:kid3}
 \Bigl|\Bigl(S \cap  Q_{\delta+\eps}(y)\Bigr) \setminus  Q_{\delta-\eps}^{\sharp}(y)\Bigr|
 &\le & \Bigl| S_\beta \cap Q_\delta(y)\Bigr| + \Bigl|S \cap \bigl(Q_{\delta+\eps}(y) \setminus  Q_{\delta-\eps}(y)\bigr)\Bigr| \\ \nonumber
  &\le& 2C_7  \delta^{\beta/2} (\delta/\eps)^{d-1} \le  C\sigma\delta^{\beta/3}  (\delta/\eps)^{d-1} \,,
 \end{eqnarray}
 where the rightmost inequality assumes that  $\delta $ is small enough so that $\delta^{\beta/6} \le C \sigma$.

Choose $\delta_0$ smaller, if necessary, so that the fractions of bad
anchors and shell anchors in \eqref{eq:kid3} are each at most
$\frac12\delta^{\beta/3}$ of the lower count in \eqref{eq:kid}.  Using also
the uniform comparability of the weights $\kappa_s$ in
\eqref{kappa-uniform}, and then taking $\eps$ sufficiently small, we obtain
\begin{eqnarray}\label{eq:kid4}
\sum \Bigl\{ \kappa_s \,: \, s\in \Bigl(S \cap  Q_{\delta+\eps}(y)\Bigr) \setminus  Q_{\delta-\eps}^{\sharp}(y)\Bigr\} \le
 \delta^{\beta/3}\sum \Bigl\{ \kappa_s \,: \, s\in S \cap  Q_{\delta+\eps}(y)  \Bigr\} \,.
 \end{eqnarray}
Equivalently,
 \begin{eqnarray}\label{eq:kid5}
\sum \Bigl\{ \kappa_s \,: \, s\in   Q_{\delta-\eps}^{\sharp}(y)\Bigr\} \ge
 \Bigl(1-\delta^{\beta/3}\Bigr)\sum \Bigl\{ \kappa_s \,: \, s\in  S \cap  Q_{\delta+\eps}(y)   \Bigr\} \,.
 \end{eqnarray}

Combining this inequality with \eqref{eq4:8} and \eqref{compa} proves \eqref{eq4:9}.

\medskip
Recall from \eqref{eq:2} that $u=0$ on $\partial B(0,\rho)$.
By considering separately the two cases $A_+>1/2$ and $A_+ \le 1/2$, we infer from \eqref{eq4:51} that
 $$ \forall x \in \Lambda_{\delta}(y) \cap \partial B(0,1+\delta), \qquad u(x) \ge  \tilde{A}:=(1-C\delta^\beta)A \,.
 $$
  Thus, Lemma \ref{l:holder} implies that
\begin{eqnarray}\label{eq4:10}
E\left(u,\{x\in  \Lambda_{\delta}(y): ~ |x|\ge 1+\delta \} \right) \ge
\tilde{A}^d  E(U^{\delta}_{\rho},\Lambda_{\delta}(y))
\ge (1-C\delta)\tilde{A}^d  E(U_{\rho},  \Lambda_{\delta}(y)) \,.
\end{eqnarray}
Here $U^{\delta}_{\rho}$ denotes the radial $d$-harmonic potential of $\bar B(0,1+\delta)$ in $B(0,\rho)$.
 Since $\delta^\beta=o(\delta^{\beta/3})$ and
$\delta=o(\delta^{\beta/3})$, we may decrease $\delta_0$ so that the product
of the two factors in \eqref{eq4:10} is at least
$1-\delta^{\beta/3}$.  After also taking $\eps$ small enough that
$E(U_\rho^\eps,\Lambda_\delta(y))=(1+o(1))E(U_\rho,\Lambda_\delta(y))$, we obtain
\begin{eqnarray}\label{eq4:11}
E\left(u, \{x\in  \Lambda_{\delta}(y): ~ |x|\ge 1+\delta \}\right)
\ge (1-\delta^{\beta/3}) A^d E(U^{\eps}_{\rho}, \Lambda_{\delta}(y)) \,.
\end{eqnarray}
 Combining \eqref{eq4:9} and \eqref{eq4:11}, and then using
\eqref{decomp-ener2.5}, proves \eqref{eq4:2} with the stated factor
$1-3\delta^{\beta/3}$ after taking $\eps$ sufficiently small relative to
$\delta$.

\end{proof}

\section{Asymptotics for $u^\eps$ in the bulk: Proof of Theorem \ref{t:1} } \label{proofthm1}

In this section, we use the lower bound on the energy of $u$ in cones from
Lemma \ref{l:4}, together with global energy minimization, to deduce an upper
bound for the energy of $u$ in every cone.  This will imply that $u$ is close
to the bulk value $A_*$ on $\partial B(0,1+\delta)$.
\begin{lemma}\label{l:5}
Let $\beta=(d-1)/(2d)$ as in Definition \ref{defbad}.
There exist $\delta_0>0$ and $C>0$, with the same parameter dependencies as
in Lemma \ref{l:4}, such that for all
  $\delta \in (0,\delta_0/2)$, points $z\in \mathbb{S}^{d-1}$, and fixed $m>1$, we have
\begin{eqnarray}\label{eq:lem5}
E(u,\Lambda_{\delta}(z))\le  E(\widehat u^\eps_{A_*},\Lambda_{\delta}(z)) +C{\delta^{m\beta/3}} \,,
\end{eqnarray}
provided that $\eps<\eps_0(\delta,m)$ is sufficiently small; the constant
$C$ is independent of $z$. Consequently, there exists $\theta=\theta(d)>0$
such that for $\delta \in (0,\delta_0/2)$ and $\eps$ sufficiently small,
\begin{eqnarray}\label{eq:lem5const}
 |u(x)-A_*| \le C_{\#} \delta^{\theta}   \; \;  \mbox{\rm for all} \; \; x\in B(0,\rho) \;\; \mbox{\rm such that} \; \; |x|=1\pm \delta \,,
\end{eqnarray}
where $C_{\#}$ may depend on $d,\sigma,m_0,\rho,$ and $\tau$, but not on
$\eps,\delta,$ or $z$.
\end{lemma}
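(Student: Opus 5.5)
The argument combines global minimality of $u$ with the local lower bound of Lemma~\ref{l:4}, following the pattern of the corresponding lemma in \cite{GNP}. The starting point is the global upper bound
\[
E(u,B(0,\rho))\le E(\widehat u^\eps_{A_*},B(0,\rho))\to\varphi_\tau(A_*).
\]
This holds because $\widehat u^\eps_{A_*}$ is admissible for $\Gamma$ in Lemma~\ref{lem:variational}: it equals $1$ on every cavity since $U^\eps_\rho=1$ there. For $\tau=\infty$ we have $A_*=1$, and we use $U^\eps_\rho$ itself.

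\textbf{Step 1: a covering of the sphere.} Take a maximal $\delta$-separated set $\{y_j\}$ on $\mathbb S^{d-1}$, so that the cones $\Lambda_\delta(y_j)$ cover the neighbourhood of the sphere with bounded overlap. A cleaner choice is to pass to a finite family of cones of radius $\delta$ around a fixed $z$ and cones at finer scales, but we work with the following partition argument. By Lemma~\ref{lem:uniform-caps} and Corollary~\ref{cor:en}, the number of cones $\Lambda_{2\delta}(y_j)$ violating \eqref{eq4:1} is at most
\[
C\varphi_\tau(A_*)\,\delta^{\beta/2}\delta^{1-d}.
\]
Hence these bad cones carry at most a fraction $C\delta^{\beta/2}$ of the total measure $\mu$.

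\textbf{Step 2: lower bound on good cones.} On each good cone, Lemma~\ref{l:4} followed by \eqref{eq:bady3} gives
\[
E(u,\Lambda_\delta(y_j))\ge(1-3\delta^{\beta/3})(1-\delta)^dE(\widehat u^\eps_{A_*},\Lambda_\delta(y_j)).
\]
To avoid overlap issues, we replace the cones by a disjoint family of spherical cells (Voronoi cells of $\{y_j\}$), each sandwiched between caps of radii $\delta/2$ and $\delta$. Lemma~\ref{l:4} and Lemma~\ref{l:holder} apply to such cells with the same proof.

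\textbf{Step 3: subtract from the global bound.} Summing the Step~2 bound over all good cells other than the one containing $z$, and subtracting from the global upper bound, gives
\[
E(u,\Lambda_\delta(z))\le E(\widehat u^\eps_{A_*},\Lambda_\delta(z))+C\delta^{\beta/3}+C\delta^{\beta/2}+o(1).
\]
Here $C\delta^{\beta/3}$ is the total relative loss over the good cells, and $C\delta^{\beta/2}$ bounds the $\widehat u$-energy of the bad cells. This is the bound with $m=1$.

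The main obstacle is upgrading the exponent to $\delta^{m\beta/3}$ for arbitrary $m$. The losses above are global, of relative size $\delta^{\beta/3}$, while $\Lambda_\delta(z)$ carries energy only of order $\delta^{d-1}$. To fix this, I would perform Steps~1--3 at a much finer scale $\delta'=\delta^{K}$ with $K$ large: Lemma~\ref{l:4} holds for every $\delta'<\delta_0$ once $\eps<\eps_0(\delta')$. The cone $\Lambda_\delta(z)$ is then tiled by cells of size $\delta'$. Applying the global subtraction to the complement of $\Lambda_\delta(z)$ tiled at scale $\delta'$ gives an error of $C\delta'^{\beta/3}=C\delta^{K\beta/3}$. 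Choosing $K=m$ yields \eqref{eq:lem5}, with a constant independent of $z$ thanks to the uniformity in Corollary~\ref{cor:en} and Lemma~\ref{lem:uniform-caps}.

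\textbf{Step 4: the pointwise estimate \eqref{eq:lem5const}.} Take $m$ large, for instance $m=6$, so that \eqref{eq:lem5} gives $E(u,\Lambda_{2\delta}(z))\le C\delta^{d-1}$. Then \eqref{eq4:1} holds for every $z$, and Lemma~\ref{l:4} produces a constant $A(z)$ with $|u-A(z)|\le C\delta^\beta$ on $|x|=1\pm\delta$ in the cone, together with
\[
E(\widehat u^\eps_{A(z)},\Lambda_\delta(z))\le(1-3\delta^{\beta/3})^{-1}\bigl(E(\widehat u^\eps_{A_*},\Lambda_\delta(z))+C\delta^{m\beta/3}\bigr).
\]
Dividing by $\mu(Q_\delta)\sim\delta^{d-1}$ and using Corollary~\ref{cor:en} gives
\[
\varphi_\tau(A(z))\le\varphi_\tau(A_*)\bigl(1+C\delta^{\beta/3}\bigr)+C\delta^{m\beta/3-(d-1)}.
\]
With $m\beta/3>d-1+\beta/3$, the right-hand side is $\varphi_\tau(A_*)+C\delta^{\beta/3}$. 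Since $\varphi_\tau$ is uniformly convex with minimum at $A_*$, this yields $|A(z)-A_*|\le C\delta^{\beta/6}$. The endpoint cases $\tau=0$ and $\tau=\infty$ are handled by the monotone bump or bulk term, as in the proof of Corollary~\ref{cor:en}. Hence $\theta=\beta/6$ works, and the constant depends on $\tau$ only through $\varphi_\tau''$.
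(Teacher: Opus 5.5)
Your overall strategy matches the paper's, and it works. You take the lower bound of Lemma~\ref{l:4} at a much finer scale $\delta^m$ everywhere outside a small-measure set. You subtract the resulting lower bound for $E(u,\Omega\setminus\Lambda_\delta(z))$ from the global bound $E(u)\le E(\widehat u^\eps_{A_*})$. Then you apply Lemma~\ref{l:4} at scale $\delta$ and compare through $\varphi_\tau$.

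The genuine difference is how the fine-scale bounds are assembled. The paper never partitions the sphere. It averages $E(u,\Lambda_{\delta^m}(y))$ over all centres $y$ using the exact Fubini identity $\int E(u,\Lambda_{\delta^m}(y))\,d\mu(y)=\mu(Q_{\delta^m})E(u)$. It bounds the exceptional set $Y_{\delta^m}$ by Chebyshev and discards the shell $Q_{\delta+\delta^m}(z)\setminus Q_{\delta-\delta^m}(z)$. This lets it use Lemma~\ref{l:4} and \eqref{eq:bady3} verbatim for round caps, with overlaps accounted for exactly.

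Your Voronoi partition gives disjoint pieces at a fixed scale, so there are only finitely many cells and uniformity in $\eps$ is automatic. The cost is that you must re-derive Lemma~\ref{l:4} and the analogue of \eqref{eq:bady3} for non-round cells. That means checking oscillation on the restricted caps, anchor counts in fixed cells with $\mu$-null boundary via ${\bf H_1}$, and an $\eps$-shell along each cell boundary. This extension is plausible but only asserted. You also have to discard the cells straddling $\partial Q_\delta(z)$, at cost $O(\delta^{d-2}\delta^m)$. Your ``tiling'' sentence, and ``the one containing $z$'' in Step~3, gloss over this.

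Two slips in Step~4 should be fixed.

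\begin{itemize}
\item With $\beta=(d-1)/(2d)$, the choice $m=6$ gives $m\beta/3=(d-1)/d<d-1$. So \eqref{eq:lem5} yields neither $E(u,\Lambda_{2\delta}(z))\le C\delta^{d-1}$ nor the hypothesis \eqref{eq4:1}. You need $m\gtrsim 6d$; the paper takes $m\beta/3>d$. This is consistent with the condition $m\beta/3>d-1+\beta/3$ you impose a few lines later.
\item At $\tau=0$ your displayed inequality only gives $A^d\lesssim\delta^{\beta/3}$. So $\theta=\beta/6$ is not justified there unless $m$ is taken larger still. Taking $\theta$ to be the minimum of the exponents from the three cases, as the paper implicitly does, suffices.
\end{itemize}
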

As in the previous section, the constants $C_i$ in the proof depend only on
the parameters listed in Lemma \ref{l:4}.

\begin{proof}
Let $\Omega=B(0,\rho) \setminus \Gamma$. Observe that for $x \in  B(0,\rho) $  and  $y \in {\mathbb S}^{d-1}$, we have
 $$x \in \Lambda_{2 \delta}(y)  	\Leftrightarrow y \in Q_{2\delta}(x/|x|) \,.
 $$
 Therefore, by Fubini,
\begin{eqnarray}\label{fub1}
\int_{{\mathbb S}^{d-1}} E(u, \Lambda_{2 \delta}(y)) \, d\mu(y)
&=& \int_{{\mathbb S}^{d-1}} \int_{\Omega \cap \Lambda_{2 \delta}(y)} |\nabla u(x)|^d \, dx \, d\mu(y) \nonumber\\
&=& \int_{\Omega} \left( \int_{Q_{2\delta}(x/|x|)} |\nabla u(x)|^d \, d\mu(y) \right) dx \nonumber\\
&=& \mu(Q_{2\delta})  E(u,B(0,\rho)) \,.
\end{eqnarray}
Define
\begin{eqnarray}\label{eq:bady}
 Y_\delta=\Bigl\{y \in {\mathbb S}^{d-1} \,:\, E(u, \Lambda_{2 \delta}(y))>\delta^{-\beta/2} \delta^{d-1} \Bigr\} \,.
\end{eqnarray}
Since $E(u,B(0,\rho)) \le  C$ and $\mu(Q_{2\delta}) =O(\delta^{d-1})$,  equation \eqref{fub1} implies that
\begin{eqnarray}\label{eq:bady2}
 \mu(Y_\delta)  \le C \delta^{\beta/2} \,.
\end{eqnarray}
Denote $\chi_m(y)=E(\widehat u^\eps_{A_*}, \Lambda_{\delta^m}(y))$ and observe that if $\delta^m<\delta_0$, then Lemma \ref{l:4} and \eqref{eq:bady3}
(applied to $\delta^m$ in place of $\delta$)
imply that for sufficiently small $\eps$,
\begin{eqnarray}\label{eq:bady3.5}
E(u, \Lambda_{\delta^m}(y)) \ge (1-C\delta^{m\beta/3}) \chi_m(y)
\quad\mbox{for }y \in {\mathbb S}^{d-1} \setminus Y_{\delta^m} \,.
\end{eqnarray}
Indeed, the direct product of the two estimates is
$(1-3\delta^{m\beta/3})(1-\delta^m)^d$, which is bounded below by the
displayed factor after decreasing $\delta_0$.

Next, note that for  $z \in {\mathbb S}^{d-1}$ and   $y \in {\mathbb S}^{d-1} \setminus Q_{\delta+\delta^m}(z)$, we have
$$ x \in \Lambda_{\delta^m}(y) \, \Rightarrow \, \Bigl\{x \in  B(0,\rho) \setminus \Lambda_\delta(z) \; \; {\rm and} \;\; y \in Q_{\delta^m}(x/|x|) \Bigr\} \,.
$$
Therefore, by Fubini
\begin{eqnarray}\label{fub2}
\!\!\! \! \underset{{\mathbb S}^{d-1} \setminus Q_{\delta+\delta^m}(z)}  \int \!\! \! \! \! \!\!\! \! E(u, \Lambda_{  \delta^m}(y)) \, d\mu(y)
 = \!\!\! \! \! \underset{{\mathbb S}^{d-1} \setminus Q_{\delta+\delta^m}(z)} \int \!\! \!  \!\int_{\Omega \cap \Lambda_{  \delta^m}(y)} \!\!
  |\nabla u(x)|^d \, dx \, d\mu(y)
  \le \!\!\! \! \!\!\! \! \underset{\Omega \setminus \Lambda_\delta(z)} \int \!\!\! \! \int_{Q_{\delta^m}(x/|x|)} \! \! \! |\nabla u(x)|^d \, d\mu(y) \, dx   \,.
\end{eqnarray}
The right-hand side equals
$\mu(Q_{\delta^m})E(u,\Omega\setminus\Lambda_\delta(z))$; hence
\eqref{fub2} gives
\begin{eqnarray}\label{fub2.5}
\mu(Q_{\delta^m}) E(u, \Omega \setminus \Lambda_\delta(z)) \ge \!\! \underset{{\mathbb S}^{d-1} \setminus Q_{\delta+\delta^m}(z)}   \int E(u, \Lambda_{  \delta^m}(y)) \, d\mu(y)
 \ge \!\! \underset{{\mathbb S}^{d-1} \setminus [Q_{\delta+\delta^m}(z) \cup   Y_{\delta^m}]} \int (1-C\delta^{m\beta/3}) \chi_m(y) \, d\mu(y) \,,
\end{eqnarray}
where we have used \eqref{eq:bady3.5} in the last step. By Fubini,
\begin{eqnarray}\label{fub3}
\int_{{\mathbb S}^{d-1}}  \chi_m(y) \, d\mu(y) =\mu(Q_{\delta^m})  E(\widehat u^\eps_{A_*},B(0,\rho)) \,
\end{eqnarray}
and
\begin{eqnarray}\label{fub4}
\int_{ Q_{\delta-\delta^m}(z) }  \chi_m(y) \, d\mu(y)  \le \mu(Q_{\delta^m})  E(\widehat u^\eps_{A_*}, \Lambda_\delta(z)) \,.
\end{eqnarray}
Write $Y^+=Y_{\delta^m} \cup \Bigl(Q_{\delta+\delta^m}(z) \setminus Q_{\delta-\delta^m}(z)\Bigr)$, so that for small $\delta$,
\begin{eqnarray}\label{fub5}
\mu(Y^+) \le C\delta^{m\beta/2}+C\delta^{d-2+m}\le C\delta^{m\beta/2} \,.
\end{eqnarray}
If $\tau<\infty$, \eqref{c13b} implies the bound
$\chi_m(y) \le C\mu(Q_{\delta^m})$ for $\eps$ small enough. If
$\tau=\infty$, then $A_*=1$ and the same bound follows directly from the
radial bulk energy. Together with \eqref{fub5}, this gives
\begin{eqnarray}\label{fub6}
\int_{ Y^+ }  \chi_m(y) \, d\mu(y) \le C \mu(Q_{\delta^m}) \delta^{m\beta/2}  \,.
\end{eqnarray}
Subtracting \eqref{fub4} and \eqref{fub6} from \eqref{fub3} yields
\begin{eqnarray}\label{fub7}
\underset{{\mathbb S}^{d-1} \setminus [Q_{\delta+\delta^m}(z) \cup   Y_{\delta^m}]} \int   \chi_m(y) \, d\mu(y)
\ge   \mu(Q_{\delta^m}) \Bigl[  E(\widehat u^\eps_{A_*}, B(0,\rho) \setminus \Lambda_\delta(z)) -C\delta^{m\beta/2} \Bigr] \,,
\end{eqnarray}
whence by \eqref{fub2.5},
\begin{eqnarray}\label{fub8}
  E(u, \Omega  \setminus \Lambda_\delta(z)) \ge
   (1-C\delta^{m\beta/3})  \Bigl[  E(\widehat u^\eps_{A_*}, B(0,\rho) \setminus \Lambda_\delta(z)) -C\delta^{m\beta/2} \Bigr] \,.
\end{eqnarray}

Thus,
\begin{eqnarray}\label{fub8.5} E(\widehat u^\eps_{A_*}) \ge  E(u) &=& E(u,  \Lambda_\delta(z)) +E(u, \Omega \setminus \Lambda_\delta(z)) \\ \nonumber
&\ge& E(u,  \Lambda_\delta(z))+
 (1-C\delta^{m\beta/3})  \Bigl[  E(\widehat u^\eps_{A_*}, \Omega \setminus \Lambda_\delta(z)) - C\delta^{m\beta/2} \Bigr] \,.
\end{eqnarray}
Rearranging terms and using the uniform bound $E(\widehat u^\eps_{A_*})\le C$, we conclude that
 \begin{eqnarray}\label{fub9}
  E(u,  \Lambda_\delta(z)) \le  E(\widehat u^\eps_{A_*},  \Lambda_\delta(z)) + C\delta^{m\beta/3}\,,
\end{eqnarray}
provided $\delta<\delta_0$ and $\eps<\eps_0(\delta,m)$ are sufficiently
small. Applying this inequality with $2\delta$ in place of $\delta$, the hypothesis of
 Lemma \ref{l:4} is satisfied,
  provided $m$ is chosen to satisfy  $m\beta/3>d$.   Therefore, by \eqref{eq4:5},  we have    that for  sufficiently small $\eps$,
  \begin{eqnarray}\label{fub9.5}
  |u(x)-A| \le C \delta^\beta  \;\; \mbox{ \rm for all} \;\; x \in \Lambda_\delta(z) \;\; \mbox{\rm such that} \;\; |x|=1\pm \delta \,,
  \end{eqnarray}
 where $A=A(z,\delta,\eps)$. By \eqref{eq4:2} and \eqref{fub9},
 \begin{eqnarray}\label{fub9.7}
    (1- 3\delta^{\beta/3}) E(\widehat u^\eps_{A},\Lambda_{\delta}(z))\, \le \, E(\widehat u^\eps_{A_*},  \Lambda_\delta(z)) + C\delta^{d}\,,
\end{eqnarray}
 since  $m\beta/3>d$. Now we separate cases.

 \medskip

 \noindent{\bf Case 1.} If $\tau \in (0,\infty)$, then \eqref{fub9.7} and Corollary \ref{cor:en} yield, for sufficiently  small $\eps>0$, that
  \begin{eqnarray}\label{fub10}
   (1-3 \delta^{\beta/3}) \mu(Q_\delta) \varphi_\tau(A)   \le \mu(Q_\delta) \varphi_\tau(A_*)+ C\delta^{d}\,.
\end{eqnarray}
Thus, since $\varphi_\tau(A_*) \le C$, we infer that
$$C\delta^{\beta/3}  \ge \varphi_\tau(A)-\varphi_\tau(A_*) \ge c_{\#}|A-A_*|^{2} \,,
$$
where $c_{\#}=c_{\#}(d,\tau,\sigma,\rho)>0$.  Therefore,
$$|A-A_*|\le C\delta^{\beta/6} \,.
$$
 In conjunction with \eqref{fub9.5}, this yields the final claim of the lemma.

 \medskip

 \noindent{\bf Case 2.} If $\tau =0$, then $A_*=0$, so $E(\widehat u^\eps_{A_*}) \to 0$ as $\eps \to 0$ by \eqref{limener}. Since
   $$   E(\widehat u^\eps_{A},  \Lambda_\delta(z))\ge CA^d \delta^{d-1} $$
by \eqref{eqh:6}, we infer from \eqref{fub9.7} that
  $|A-A_*|=A =O(\delta^{1/d})$ for small $\eps$.

 \medskip

  \noindent{\bf Case 3.} If $\tau =\infty$, then $A_*=1$ and
  \begin{eqnarray}\label{fubnew}
    E(\widehat u^\eps_{A_*},  \Lambda_\delta(z)) = E( U^{\eps}_{\rho},  \Lambda_\delta(z)) \le C \delta^{d-1}
\end{eqnarray}
by \eqref{eqh:6}.
  On the other hand, for $A<1$, the definition of $\widehat u_A^\eps$ and \eqref{kappa-uniform} imply that, for $\eps>0$ small enough,
 \begin{eqnarray}\label{fubnew1} \frac{E(\widehat u^\eps_{A},\Lambda_{\delta}(z))}{(1-A)^d} \ge  \sum_{s \in S \cap Q_{\delta-\eps}(z)} \kappa_s \ge
  C (\delta/\eps)^{d-1} \frac{1}{(\log(1/(10\alpha)))^{d-1}}
  = C\delta^{d-1}\left(\eps\log(1/(10\alpha))\right)^{1-d}.
  \end{eqnarray}
  Since $\tau=\infty$ means $\eps\log(1/\alpha)\to0$, the right-hand side of \eqref{fubnew1} tends to $\infty$ as $\eps \downarrow 0$.  Thus, by \eqref{fub9.7}, we have
  $|A-A_*|=1-A \le \delta$ for small enough $\eps$.
\end{proof}

\begin{proof}[Proof of Theorem \ref{t:1}]
  By Lemma \ref{l:5}, if $\delta<\delta_0(d,\sigma,m_0,\rho)$, then for sufficiently small $\eps$, we have
  $$ \forall x \in \partial B(0,1+\delta) \cup \partial B(0,1-\delta), \qquad  A_*-C_{\#} \delta^\theta  \le u(x) \le A_*+C_{\#} \delta^\theta  \,,
  $$
  where $\theta, C_{\#}$ do not depend on $\eps,\delta$.
The comparison principle (Lemma  \ref{l:comp}) then implies that for sufficiently small $\eps$,
\begin{eqnarray}\label{bulk1}
 \forall x \in B(0,\rho) \setminus B(0,1+\delta), \qquad  (A_*-C_{\#} \delta^\theta) U^{\delta}_{\rho}(x) \le u(x) \le (A_*+C_{\#} \delta^\theta)U^{\delta}_{\rho}(x)  \,.
 \end{eqnarray}
 and
\begin{eqnarray}\label{bulk2}
\forall x\in B(0,1-\delta),\qquad
A_*-C_{\#}\delta^\theta\le u(x)\le A_*+C_{\#}\delta^\theta.
\end{eqnarray}
For the latter conclusion, the maximum principle suffices. Letting first
$\eps\downarrow0$ for fixed $\delta$ and then $\delta\downarrow0$ proves
uniform convergence on compact subsets of
$\bar B(0,\rho)\setminus\mathbb S^{d-1}$. Compact sets meeting
$\partial B(0,\rho)$ are covered by \eqref{bulk1}, and both $u^\eps$ and
$A_*U_\rho$ vanish on that boundary.

It remains to prove the capacity assertion.  Fix $\delta>0$ sufficiently
small and choose $m$ so that $m\beta/3>d$.  Lemma \ref{l:5}, applied with
$2\delta$, and Corollary \ref{cor:en} give, uniformly in
$y\in\mathbb S^{d-1}$,
\begin{eqnarray}
E(u,\Lambda_{2\delta}(y))
\le E(\widehat u^\eps_{A_*},\Lambda_{2\delta}(y))+C\delta^d
\le C\delta^{d-1}.
\end{eqnarray}
After decreasing $\delta$, this is at most
$\delta^{d-1-\beta/2}$, so Lemma \ref{l:4} applies in every cone.  Combining
it with \eqref{eq:bady3} yields
\begin{eqnarray}
E(u,\Lambda_\delta(y))
\ge(1-C\delta^{\beta/3})
E(\widehat u^\eps_{A_*},\Lambda_\delta(y)).
\end{eqnarray}
Averaging over $y$ and using Fubini gives
\begin{eqnarray}\label{eq:global-energy-squeeze}
(1-C\delta^{\beta/3})E(\widehat u^\eps_{A_*})
\le E(u)\le E(\widehat u^\eps_{A_*}),
\end{eqnarray}
where the upper bound follows from Lemma \ref{lem:variational}. Set
\[
L_\tau:=\frac{\omega_{d-1}}{(\log\rho)^{d-1}}A_*^{d-1}.
\]
If $\tau<\infty$, equations \eqref{limener} and \eqref{phimin} give
\begin{eqnarray}\label{eq:finite-capacity-energy}
E(\widehat u^\eps_{A_*})\longrightarrow
\varphi_\tau(A_*)=L_\tau.
\end{eqnarray}
If $\tau=\infty$, then $A_*=1$ and directly
\begin{eqnarray}\label{eq:infinite-capacity-energy}
E(\widehat u^\eps_{A_*})=E(U_\rho^\eps)\longrightarrow
\frac{\omega_{d-1}}{(\log\rho)^{d-1}}=L_\tau.
\end{eqnarray}
For each fixed sufficiently small $\delta$,
\eqref{eq:global-energy-squeeze} therefore implies
\[
(1-C\delta^{\beta/3})L_\tau
\le\liminf_{\eps\downarrow0}E(u^\eps)
\le\limsup_{\eps\downarrow0}E(u^\eps)
\le L_\tau.
\]
Letting $\delta\downarrow0$ proves $E(u^\eps)\to L_\tau$. Since
$E(u^\eps)={\rm cap}_d(\Gamma_\eps,B(0,\rho))$, this proves
\eqref{eq:thm1B}.
\end{proof}

\section{Separation theorem} \label{necksec}

In this section we formulate an analogue of \cite[Theorem~5.1]{GNP} for the case $p=d$.

\begin{theorem} \label{neckthm}
Assume $p=d$.
Let $0<\eps<\delta/10<1/10$, and let $\emptyset \ne S \subset \partial B(0,1)$ be a set of anchors, with Euclidean distance at least $\eps$ between any two anchors and $\alpha <\min\{1/80,\exp(-(\tau_1 \eps)^{-1}) \} \,$ for some  $\tau_1 \in (0,\infty).$
For $\zeta>0$, write    $\Omega_\zeta = B(0,1 +\delta) \setminus \bigl[ \cup_{s \in S} \bar{B}(s, \zeta \eps) \bigr]$.
Let $w: \bar{B}(0,1+\delta) \to [0,1]$ be the Perron solution of the  boundary value problem
\begin{eqnarray}
\left\{
\begin{array}{lll}
\Delta_p w = 0 & \mbox{in} &\Omega_{\alpha}, \\
w=0 & \mbox{on} & \partial B(0,1+\delta), \\
w=1 & \mbox{on}   &  \cup_{s\in S} \partial B(s,\alpha \eps) \,.
\end{array}
\right.
\end{eqnarray}
Then for some $C=C(d)$, we have
\begin{equation} \label {eq:neckth}
\sup_{z \in \Omega_{1/10}} |w(z)| \le C\tau_1 \delta \,.
\end{equation}
\end{theorem}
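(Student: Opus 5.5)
The plan is to bound $w$ from above by an explicit $d$-superharmonic barrier $W$ built on $\Omega_\alpha$ and then apply the comparison principle (Lemma \ref{l:comp}, in the weak form of Lemma \ref{lem:variational}). Set $L:=\log(1/(10\alpha))$. The hypothesis $\alpha<\exp(-(\tau_1\eps)^{-1})$ gives $L^{-1}\le C\tau_1\eps$. This is the only way $\alpha$ enters.

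\emph{Heuristic behind $\lambda\asymp\tau_1\delta$.} Each hole $B(s,\alpha\eps)$ can emit at most $d$-flux
\[
\omega_{d-1}\bigl((1-\lambda)/L\bigr)^{d-1}\le C(\tau_1\eps)^{d-1}
\]
into the region where $w\le\lambda$. The anchors are $\eps$-separated, so this is a flux density of at most $C\tau_1^{d-1}$ per unit area of $\mathbb S^{d-1}$. A profile that decreases from $\lambda$ near $|x|=1$ to $0$ on $|x|=1+\delta$ carries flux density of order $(\lambda/\delta)^{d-1}$. Balancing the two forces $\lambda\lesssim\tau_1\delta$.

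\emph{Step 1: outer (radial) piece.} Fix $\lambda:=K\tau_1\delta$ with $K=K(d)$ large, capped at $1$; if $K\tau_1\delta\ge1$ the claim is trivial. In $1+\eps/10\le|x|\le1+\delta$, let $W_{\rm out}$ be the radial $d$-harmonic function equal to $\lambda'$ at $|x|=1+\eps/10$ and to $0$ at $|x|=1+\delta$. Here $\lambda'\in[\lambda/2,\lambda]$ is chosen so that its outward flux density $(\lambda'/\log\frac{1+\delta}{1+\eps/10})^{d-1}\asymp(\lambda/\delta)^{d-1}$ dominates $C\tau_1^{d-1}$ once $K$ is large. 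Inside $|x|\le1-\eps/10$, away from the holes, take $W=\lambda$.

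\emph{Step 2: near each anchor.} In $B(s,\eps/10)\setminus\bar B(s,\alpha\eps)$ put
\[
W=\lambda+(1-\lambda)\,U_{1/(10\alpha)}\bigl((x-s)/(\alpha\eps)\bigr).
\]
This equals $1$ on $\partial B(s,\alpha\eps)$, equals $\lambda$ on $\partial B(s,\eps/10)$, and is $d$-harmonic there.

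\emph{Step 3: the cell layer (main obstacle).} This is where Steps 1 and 2 must be glued. Naive gluing fails: each bump has outward flux on $\partial B(s,\eps/10)$, while the constant $\lambda$ carries none, so the junction acts as a positive source and $W$ is not superharmonic there. The fix I would try first is to insert a corrector in the shell $\{1-\eps/10\le|x|\le1+\eps/10\}$ minus the small balls. This is a function of size $O(\tau_1\eps)\le O(\tau_1\delta)$ and gradient $O(\tau_1)$, obtained by solving a local flux-transport problem in each Voronoi-type cell of the anchor set: it carries the point-source flux $C(\tau_1\eps)^{d-1}$ of each hole out to the uniform flux density of $W_{\rm out}$ on $|x|=1+\eps/10$. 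Because the gradients of the bump, the corrector and $W_{\rm out}$ are all of order $\tau_1$ at the $\eps$ scale, linearizing $\Delta_d$ around them costs only constant factors, which $K$ absorbs.

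If the explicit construction becomes too heavy, I would fall back on a variational version of the same estimate, run in the following order.
\begin{enumerate}
\item[(a)] Bound the energy by comparison with the sum of bumps, using Lemma \ref{lem:variational}: $E(w)\le C|S|L^{1-d}\le C\tau_1^{d-1}\mu$-density. Also localize this bound to cones $\Lambda_r(y)$ by the Fubini argument of Lemma \ref{l:5}.
\item[(b)] Let $M:=\sup_{\Omega_{1/10}}w$. By the maximum principle $M$ is attained on some sphere $\partial B(s,\eps/10)$. Use Lemma \ref{l:1} (Harnack/oscillation) to get $w\ge M/2$ on a cap of radius comparable to $\eps$ on $\partial B(0,1+\eps/10)$, and propagate this outward.
\item[(c)] Apply Lemma \ref{l:holder} on the corresponding cone to get energy at least $cM^d\,\mu(Q)/\delta^{d-1}$. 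Compare with (a) to obtain $M\le C\tau_1\delta$.
\end{enumerate}
In either route, the delicate point is Step 3, equivalently (b): passing from the point-source structure at scale $\eps$ to a uniform flux through $\partial B(0,1+\eps/10)$ while losing only constant factors.
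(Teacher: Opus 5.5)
There is a genuine gap, and it is exactly the step you flag as the main obstacle. Neither route gets from the point-source picture at scale $\eps$ to a bound on $w$ at the level of $\partial B(0,1+\eps/10)$, and that passage is the whole content of the theorem.

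In the barrier route, Step 3 needs an honest $d$-supersolution on the perforated shell. It must be continuous, with nonnegative flux jumps across every interface $\partial B(s,\eps/10)$ and $|x|=1\pm\eps/10$. It must also work for an \emph{arbitrary} nonempty $\eps$-separated anchor set: the theorem assumes no periodicity or density, so your ``Voronoi cells'' may be irregular or huge. The remark that linearizing $\Delta_d$ ``costs only constant factors'' does not replace this construction. $\Delta_d$ is nonlinear and degenerate (your $W$ has zero gradient on the inner region), and sums of $d$-harmonic pieces are not $d$-harmonic. Being a supersolution is a sign condition on $-\Delta_d W$ as a measure, including its singular part on each interface, and local errors there cannot be absorbed into $K$.

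The variational fallback has an additional quantitative defect. Comparing $cM^d\mu(Q)\delta^{1-d}$ from Lemma \ref{l:holder} with the sum-of-bumps bound $C\tau_1^{d-1}\mu(Q)$ gives only $M\le C(\tau_1\delta)^{(d-1)/d}$. This is weaker than $C\tau_1\delta$ whenever $\tau_1\delta<1$. The crude competitor ignores the saving factor $(1-M)^d$ near the holes, so this comparison cannot reproduce the linear scaling your flux heuristic correctly predicts. Moreover, the Fubini localization of Lemma \ref{l:5} controls cone energies only on average, not in the particular cone where the maximum sits. And Lemma \ref{l:1} is an oscillation bound conditional on an energy bound, not a Harnack inequality that spreads $w\ge M/2$ over a cap.

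The paper closes this step with no global supersolution and no energy comparison, by bookkeeping suprema at three levels. Let $D$, $F$, $G$ be the suprema of $w$ on $\bigcup_s\partial B(s,\eps/20)$, on $\bigcup_s\partial B(s,\eps/10)$, and on $\partial B(0,1+\eps/5)$. Each of three inequalities comes from comparison with a single $d$-harmonic function; affine images of $d$-harmonic functions are $d$-harmonic, so no superposition is ever needed.
\begin{enumerate}
\item[(a)] The logarithmic profile on $B(s,\eps/10)\setminus\bar B(s,\alpha\eps)$ gives $D\le F+c_1\tau_1\eps(1-F)$, using $\log 2/\log(1/(10\alpha))\le c_1\tau_1\eps$.
\item[(b)] The bound $w\le F$ on $\Omega_{1/10}$, combined with the radial profile on $1+\eps/10<|x|<1+\delta$, gives $G\le(1-c_2\eps/\delta)F$.
\item[(c)] Take a rescaled copy of one fixed Perron solution on the half-annular region $H_s=\{x\in B(s_*,2\eps/5)\setminus\bar B(s,\eps/20):\langle x,s\rangle<1+\eps/5\}$, where $s_*=(1+\eps/5)s$. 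Its flat face lies in $\{|x|\ge 1+\eps/5\}$, where $w\le G$, and its curved faces lie where $w\le D$. This gives $F\le(1-c_3)D+c_3G$.
\end{enumerate}
Inequalities (a) and (b) are precisely the two sides of your flux balance. Inequality (c) is a scale-$\eps$ quantitative strong maximum principle that couples them and replaces your corrector. Substituting (a) and (b) into (c) gives $c_2c_3(\eps/\delta)F\le(1-c_3)c_1\tau_1\eps$, hence $F\le C\tau_1\delta$, and the maximum principle on $\Omega_{1/10}$ finishes the proof.
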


\begin{figure}[!ht]%
	\centering \includegraphics[width=4.in]{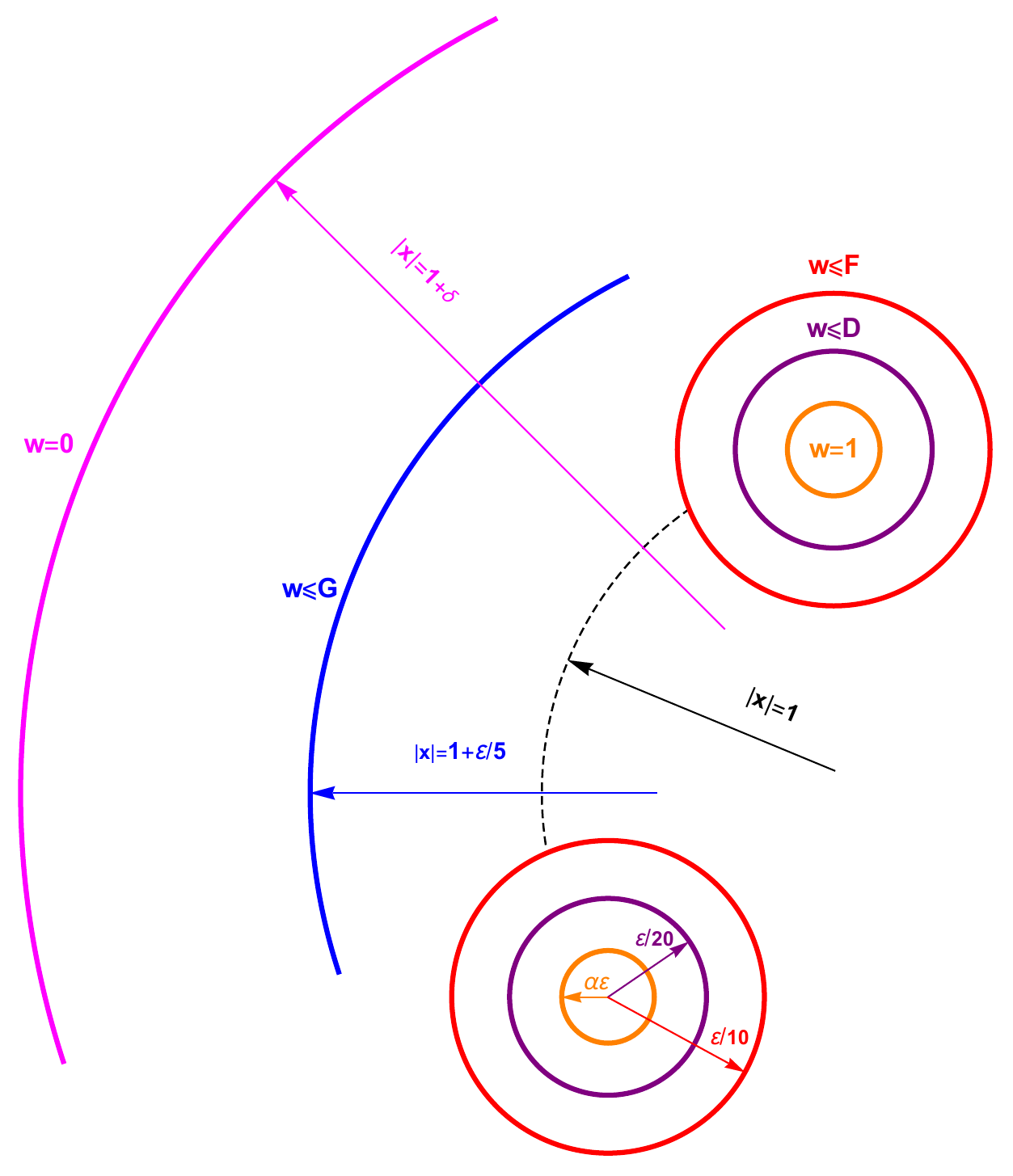}
	\caption{The inner balls are centered at anchors on the unit sphere and have radius $\alpha \eps$; on their boundary, $w=1$. Each is surrounded by two concentric spheres of radii $\eps/20$ and $\eps/10$. These small spheres lie inside the larger balls of radii $1+\eps/5$ and $1+\delta$ centered at the origin.}
	\label{fig:neck}
\end{figure}

The proof of Theorem \ref{neckthm} is based on the maximum principle and the following lemma.

\begin{lemma}\label{necklem}
Let \begin{align}
D & := \sup\Bigl\{ w(x) \mid   x \in \cup_{s \in S} \; \partial B\left (s, \frac{\eps}{20}\right) \Bigr\}\, ,\notag \\
F & := \sup\Bigl\{ w(x) \mid  x \in \cup_{s \in S} \; \partial B\left(s, \frac{\eps}{10}\right) \Bigr\} \, , \notag \\
G & := \sup\Bigl\{ w(x) \mid x \in \partial B\left(0 , 1 + \frac{\eps}{5}\right)\Bigl\} \,. \notag
\end{align}
Then there exist constants $c_1, c_2>0$ and $c_3 \in (0,1)$ that only depend on $d$,
such that
\begin{align}
(a) \qquad D & \leq F  + c_1 \tau_1  \eps(1- F) \,, \label{D_ineq} \\
(b) \qquad G & \leq (1 - c_2 \eps/\delta)F \label{G_ineq} \,,\\
(c) \qquad F & \leq (1-c_3)D + c_3 G \label{F_ineq} \,.
\end{align}
\end{lemma}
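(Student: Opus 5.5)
Each of the three inequalities is a comparison-principle estimate (Lemma \ref{l:comp}) against an explicit radial $d$-harmonic barrier on a suitable subdomain of $\Omega_\alpha$. Throughout, $w\in[0,1]$. The balls $B(s,\eps/10)$ are pairwise disjoint because the anchors are $\eps$-separated.

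\emph{Part (a).} Fix $s\in S$ and work in the annulus $B(s,\eps/10)\setminus\bar B(s,\alpha\eps)$. This annulus lies inside $\Omega_\alpha$: it contains no other cavity, and it stays inside $B(0,1+\delta)$ because $\eps/10<\delta$. Compare $w$ with the radial $d$-harmonic function
\[
h(x)=F+(1-F)\,\frac{\log(\eps/(10|x-s|))}{\log(1/(10\alpha))}.
\]
This $h$ equals $1$ on $\partial B(s,\alpha\eps)$ and equals $F\ge w$ on $\partial B(s,\eps/10)$, so comparison gives $w\le h$ in the annulus. On $\partial B(s,\eps/20)$ this yields
\[
w\le F+(1-F)\,\frac{\log 2}{\log(1/(10\alpha))}.
\]
The hypothesis $\alpha<\exp(-(\tau_1\eps)^{-1})$ together with $\alpha<1/80$ gives $\log(1/(10\alpha))\ge c\,\log(1/\alpha)\ge c/(\tau_1\eps)$. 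Taking the supremum over $s$ proves (a) with $c_1=C\log 2$.

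\emph{Part (b).} Work on the shell $B(0,1+\delta)\setminus\bar B(0,1+\eps/10)$. It contains no cavities, since every cavity lies in $\bar B(0,1+\alpha\eps)$. We need $w\le F$ on the inner sphere $\partial B(0,1+\eps/10)$. To get this, apply the maximum principle on $B(0,1+\delta)\setminus\bigcup_s\bar B(s,\eps/10)$: there $w\le F$ on the small spheres and $w=0$ on the outer sphere, so $w\le F$ on the whole region. This region contains $\partial B(0,1+\eps/10)$ minus the points inside the balls; at those remaining points, use (a)-type control, or more simply shrink the radius slightly and rely on continuity. Then compare $w$ on the shell with
\[
F\,\frac{\log((1+\delta)/|x|)}{\log((1+\delta)/(1+\eps/10))}.
\]
At radius $1+\eps/5$ the ratio equals $1-\log\frac{1+\eps/5}{1+\eps/10}\big/\log\frac{1+\delta}{1+\eps/10}\le 1-c_2\eps/\delta$. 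This gives (b).

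\emph{Part (c).} This is the main obstacle. It needs a Harnack-type statement, uniform in $d$, saying that a definite fraction of each sphere $\partial B(s,\eps/10)$ sees the outer sphere $\partial B(0,1+\eps/5)$. Rescale by $\eps$ about $s$. On the domain $B(s,\eps/10)\setminus\bar B(s,\eps/20)$, extended slightly outward, $w\le D$ on the inner sphere. Outside the ball we have $w\le G$ beyond radius $1+\eps/5$. Near $\partial B(s,\eps/10)$ the other cavities are far away, namely at distance at least $9\eps/10$.

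I would bound $w$ on $\partial B(s,\eps/10)$ by the $d$-harmonic measure in the region $B(s,\eps/2)\cap B(0,1+\eps/5)$ with the ball $B(s,\eps/20)$ removed. Apply the maximum principle on $\Omega':=\Omega_{1/20}\cap B(0,1+\eps/5)$. Its boundary values are at most $D$ on the small spheres and at most $G$ on the outer sphere, and by linearity of the boundary data in these two constants, $w\le D+(G-D)\,\omega$ holds whenever $G\le D$. Here $\omega$ is the $d$-harmonic function on $\Omega'$ with boundary values $1$ on the outer sphere and $0$ on the small spheres; in the case $G>D$, (c) follows trivially from $F\le \max(D,G)$ after a minor adjustment. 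One then needs $\omega\ge c_3$ on $\partial B(s,\eps/10)$.

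To get this lower bound, use comparison with the scaled configuration where the other anchors are replaced by the full sphere pattern. Alternatively, build an explicit subsolution: take the $d$-harmonic measure of the outer sphere in a slab-like local domain, and lower-bound it by $c_3$ using a radial barrier centered at a point outside $B(0,1+\eps/5)$ at distance about $\eps/5$ from $s$. That barrier $1-\log(|x-p|/r_1)/\log(r_2/r_1)$ is positive on $\partial B(s,\eps/10)$, which sits at distance at most $3\eps/10$ from the outer boundary, and it is nonpositive on all small spheres within reach. The delicate point is keeping the barrier negative on the neighboring cavity spheres uniformly in the anchor geometry; choosing $r_2$ of order $0.9\eps$ should handle this.

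Combining the three parts with $F\le(1-c_3)D+c_3G$ and $D\le F+c_1\tau_1\eps(1-F)$, $G\le(1-c_2\eps/\delta)F$, then solving, gives $F\le C\tau_1\delta$. That is the subsequent use in Theorem \ref{neckthm}.
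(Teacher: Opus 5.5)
Your parts (a) and (b) are correct and follow the paper. In (a), the bound $\log(1/(10\alpha))\ge(\log 8/\log 80)\log(1/\alpha)$, valid because $\alpha<1/80$, neatly replaces the paper's two-case argument. In (b) no repair is needed near the small balls. For $|x|=1+\eps/10$ one has $|x-s|\ge|x|-1=\eps/10$, so this sphere only touches each $\bar B(s,\eps/10)$, lies in $\overline{\Omega_{1/10}}$, and $w\le F$ on it directly. (Your ``(a)-type control'' fallback would not give $w\le F$ inside a small ball anyway.)

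In (c), your reduction $w\le D+(G-D)\omega$ on $\Omega'$ is sound and holds for either sign of $G-D$, since $a+b\omega$ is $d$-harmonic for all real $a,b$. In fact $G\le D$ always, because $\partial B(0,1+\eps/5)\subset\Omega_{1/20}$, where $w\le D$ by the maximum principle. Your claim that the case $G>D$ follows from $F\le\max(D,G)$ is false as stated, but that case never occurs.

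The genuine gap is the step you flag yourself: a uniform lower bound $\omega\ge c_3(d)$ on \emph{all} of $\partial B(s,\eps/10)$. Your explicit radial barrier cannot provide it. Since $\omega=0$ on $\partial B(s,\eps/20)$, a barrier $b$ that decreases in $|x-p|$ must be $\le0$ at the point of $\partial B(s,\eps/20)$ nearest to $p$, which is at distance $\bigl||p-s|-\eps/20\bigr|$. But the farthest point of $\partial B(s,\eps/10)$ from $p$ is at distance $|p-s|+\eps/10$, which is larger, so $b\le0$ there too, whatever the center $p$. Concretely, with $p$ near $(1+\eps/5)s$, the back point $(1-\eps/10)s$ is shadowed by the cavity. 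Taking $r_2$ of order $0.9\eps$ only makes $b>0$ on $\partial B(s,\eps/20)$, which breaks the comparison. Your other suggestion (comparison with ``the full sphere pattern'') is too vague to check. Replacing the sieve by a full sphere would cut off the back half of $\partial B(s,\eps/10)$ altogether.

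One way to close the gap within your framework is to use the barrier only at the front point. Take $p=(1+\eps/5+\eps/100)s$, $r_1=\eps/100$, $r_2=16\eps/100$, and compare on $\Omega'\cap B(p,r_2)$; this gives $\omega((1+\eps/10)s)\ge 1-\log 11/\log 16$. Then, since $\partial B(s,\eps/10)$ is connected and at distance at least $\eps/20$ from $\partial\Omega'$, a Harnack chain of $N(d)$ balls of radius $\eps/80$ spreads this to $\omega\ge c_3(d)$ on the whole sphere.

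The paper avoids both barriers and Harnack chains. It cuts with the tangent hyperplane $\langle x,s\rangle=1+\eps/5$, whose relevant piece lies in the cavity-free shell $1+\eps/5\le|x|\le1+\delta$ where $w\le G$. The local domain $H_s=\{x\in B((1+\eps/5)s,2\eps/5)\setminus\bar B(s,\eps/20):\langle x,s\rangle<1+\eps/5\}$ is then an exact similar copy of one fixed domain $H$. It compares $w$ with $G+(D-G)\psi_s$, where $\psi$ is a single fixed $d$-harmonic function on $H$, equal to $1$ on the curved boundary and to $\max\{0,|x|-7\}$ on the flat part. The constant $c_3$ then comes from the strong maximum principle and compactness, independently of $\eps$ and of the other anchors.
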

\begin{proof}
Let $0<r<R$ and set
\begin{equation} \label{funda}
h_{r,R}(x)=\frac{\log(|x|/R)}{\log(r/R)}  \; \mbox{ \rm for }\;  r < |x| < R\,.
\end{equation}
Observe that $h_{r,R}$ is positive and $p$-harmonic in $B(0,R)\setminus \bar  B(0,r)$ and takes values $0$ and $1$ on $\partial B(0,R)$ and $\partial B(0,r)$ respectively.

\medskip

(a) ~Given the definition of $F,$  the comparison principle (Lemma \ref{l:comp}) implies that for each $s\in S$, we have
\begin{eqnarray}\label{neckeq1}
w(x)\le F+(1-F)\nu_1(x-s) , \qquad \forall x\in  B\left(s,\frac{\eps}{10}\right) \setminus  B(s,\alpha\eps),
\end{eqnarray}
where $\nu_1 (x)=h_{r,R}(x)$ with $r=\alpha \eps$,~$R={\eps}/{10}.$
Hence, for $|x|=\eps/20$,
\begin{eqnarray} \label{neckeq2}
\nu_1(x)=\frac{\log 2}{\log(1/(10\alpha))}.
\end{eqnarray}
Put $q=\tau_1\eps$. If $q\le(2\log10)^{-1}$, the assumption
$\alpha<\exp[-1/q]$ gives
$\log(1/(10\alpha))\ge(2q)^{-1}$. If $q>(2\log10)^{-1}$, the additional
bound $\alpha<1/80$ gives
$\nu_1\le\log2/\log8\le Cq$. Thus in all cases
\begin{eqnarray}
\nu_1(x)\le c_1\tau_1\eps \qquad \mbox{for } |x|=\eps/20.
\end{eqnarray}
We infer from \eqref{neckeq1} that
\begin{eqnarray}
  w(x) \leq F + (1-F) c_1\tau_1  \eps    \qquad \forall x\in \partial B\left(s,\frac{\eps}{20}\right) \,.
\end{eqnarray}
This gives \eqref{D_ineq}.

\medskip

(b) First, the maximum principle gives
\begin{equation} \label{eq:boundviaF}
\sup_{z \in \Omega_{1/10}} w(z) \le F \,.
\end{equation}
Therefore, $w \le F$ on $\partial B(0,1+\eps/10)$. Hence, by the comparison principle,
\begin{eqnarray}\label{neckeq3}
w(x)\le F \nu_2 (x) , \qquad \forall x\in B\left(0, 1+\delta \right) \setminus B\left (0,1+\frac{\eps}{10}\right)\,,
\end{eqnarray}
where $\nu_2(x)=h_{r,R}(x)$ with $r=1+\eps/10$ and $R=1+\delta,$ that is
\begin{eqnarray}
\nu_2(x)=\frac{\log\left(\frac{|x|}{1+\delta}\right)}{\log\left(\frac{1+\eps/10}{1+\delta}\right)}.
\end{eqnarray}
Hence,
\begin{eqnarray}
1 -  \nu_2\left(1+\frac{\eps}{5}\right)=\frac{\log\left(\frac{1+\eps/5}{1+\eps/10}\right)}{\log\left(\frac{1+\delta}{1+\eps/10}\right)}.
\end{eqnarray}
Because $0<\eps<\delta/10<1/10$, the elementary bounds for
$\log(1+t)$ give
\begin{eqnarray}
\log\left(\frac{1+\eps/5}{1+\eps/10}\right)\ge c\eps,
\qquad
\log\left(\frac{1+\delta}{1+\eps/10}\right)\le\log(1+\delta)\le\delta.
\end{eqnarray}
Consequently,
  \begin{equation} \label{vlocal}
  1 -  \nu_2\left(1+\frac{\eps}{5}\right)  \geq c_2 \frac{\eps}{\delta}\,.
\end{equation}
  Combining \eqref{neckeq3} and  \eqref{vlocal}, we have that
\begin{eqnarray}
w(x) \le \left ( 1-c_2\frac{\eps}{\delta}\right) F, \qquad  \forall x\in\partial B\left(0,1+\frac{\eps}{5}\right).
\end{eqnarray}
  This gives \eqref{G_ineq}.

\medskip

(c) We include the short argument from \cite[Theorem~5.1]{GNP}.  Let
$\xi=(1,0,\ldots,0)$ and consider
\begin{eqnarray}
H=\{x\in B(0,8)\setminus\bar B(4\xi,1):x_1>0\}.
\end{eqnarray}
Define $f\in C(\partial H)$ as follows: set $f=1$ on the two spherical
parts of $\partial H$, and on the flat part set
\begin{eqnarray}
f(x)=\max\{0,|x|-7\}.
\end{eqnarray}
The definitions agree at the rim $|x|=8$. Since $H$ is bounded and
Lipschitz, the continuous boundary data are resolutive. Let
$\psi\in C(\bar H)$ be the $d$-harmonic Perron solution in $H$ with boundary
values $f$. Then $0\le\psi\le1$, and $\psi$ is not constant. The strong
maximum principle and continuity imply
that, for some $c_3=c_3(d)\in(0,1)$,
\begin{eqnarray}\label{psimax}
\max_{x\in\partial B(4\xi,2)}\psi(x)=1-c_3.
\end{eqnarray}
For $s\in S$, put $s_*=(1+\eps/5)s$ and
\begin{eqnarray}
H_s=\{x\in B(s_*,2\eps/5)\setminus\bar B(s,\eps/20):
\langle x,s\rangle<1+\eps/5\}.
\end{eqnarray}
This is a rotated, translated, and $\eps/20$-scaled copy of $H$.  Anchor
separation ensures that no other removed ball meets $H_s$.
Let $\psi_s$ denote the corresponding rotated, translated, and
$\eps/20$-scaled copy of $\psi$ on $H_s$.
The maximum principle in $\Omega_{1/20}$ first gives $w\le D$ there, and
hence $D\ge G$.  The radial annulus
$\{1+\eps/5\le |x|\le1+\delta\}$ is free of removed balls, so the maximum
principle also gives $w\le G$ throughout that annulus. Thus $w\le G$ on
the flat boundary portion of $H_s$ and $w\le D$ on both curved portions.
On the flat portion $\psi_s\ge0$, and on the curved portions $\psi_s=1$.
Hence comparison with $G+(D-G)\psi_s$, followed by \eqref{psimax} on
$\partial B(s,\eps/10)$, gives
\begin{eqnarray}
F\le G+(1-c_3)(D-G)=(1-c_3)D+c_3G,
\end{eqnarray}
which proves (c).
\end{proof}

\begin{proof}[Proof of Theorem \ref{neckthm}]
Combining \eqref{D_ineq}-\eqref{F_ineq}, we have
\begin{eqnarray}
& F  \leq (1-c_3) \bigl[ F + c_1 \tau_1 \eps (1-F)\bigr] + c_3 \bigl[1 - c_2 \eps/\delta \bigr]F.
\end{eqnarray}
 This  gives
 \begin{eqnarray}
 0  \leq (1-c_3)c_1\tau_1 \eps(1-F) - c_3 c_2(\eps/\delta) F\,,
\end{eqnarray}
which in turn implies that
\begin{eqnarray}
 c_3 c_2 (\eps/\delta)F \leq   c_1 \tau_1 \eps\,,
 \end{eqnarray}
 whence
 \begin{eqnarray}
& F \leq \left(\frac{c_1 }{c_3 c_2} \right) \tau_1 \delta \,.
\end{eqnarray}
Thus there exists a constant $c_{4}>0$ such that
$F \leq c_{4}\tau_1 \delta$. By \eqref{eq:boundviaF}, this completes the proof.
\end{proof}

\section{Asymptotics for $u^\eps$ near the unit sphere: Proof of Theorem \ref{main2} } \label{proofmain}

Since the critical case has $p=d\ge2$, we may use Clarkson's inequality, in the slightly more general form given in \cite{Boas} for functions $f,g$ taking values in ${\mathbb R}^d$:
\begin{eqnarray} \label{clark1}
 p \ge 2 \Rightarrow \quad \left\| \frac{f + g}{2} \right\|_{ p}^p + \left\| \frac{f - g}{2} \right\|_{p}^p \le
 \frac{1}{2} \left( \| f \|_{p}^p + \| g \|_{ p}^p \right) \,,
\end{eqnarray}

\begin{proof}[Proof of Theorem \ref{main2}]
Recall that $\beta=(d-1)/(2d)$.  From \eqref{eq:global-energy-squeeze}, for
each fixed sufficiently small $\delta$ and all sufficiently small $\eps$,
\begin{eqnarray}\label{soclose}
0\le E(\widehat u^\eps_{A_*})-E(u^\eps)\le C\delta^{\beta/3}.
\end{eqnarray}
The functions $u^\eps$, $\widehat u^\eps_{A_*}$, and their midpoint belong
to the same convex admissible class.  By Lemma \ref{lem:variational}, the
energy of the midpoint is at least $E(u^\eps)$.  Clarkson's inequality gives
\begin{eqnarray}
2^{-d}\|\nabla u^\eps-\nabla\widehat u^\eps_{A_*}\|_d^d
\le\frac12\bigl(E(\widehat u^\eps_{A_*})-E(u^\eps)\bigr)
\le C\delta^{\beta/3}.
\end{eqnarray}
Consequently, for every fixed sufficiently small $\delta$,
\begin{eqnarray}
\limsup_{\eps\to0}
\|\nabla u^\eps-\nabla\widehat u^\eps_{A_*}
\|_{L^d(B(0,\rho))}^d\le C\delta^{\beta/3}.
\end{eqnarray}
Letting $\delta\to0$ proves convergence of the gradients. Since
$u^\eps-\widehat u^\eps_{A_*}\in H^{1,d}_0(B(0,\rho))$, Poincar\'e's
inequality also gives convergence in $L^d$.

We next prove uniform convergence to the shape-adapted ansatz.  The bulk
estimate \eqref{bulk1}, together with
\begin{eqnarray}\label{eq:bulk-potentials-close}
\|U_\rho^\delta-U_\rho^\eps\|_{L^\infty(B(0,\rho)\setminus B(0,1+\delta))}
\le C\delta,
\end{eqnarray}
already gives the desired bound outside $B(0,1+\delta)$.

If $\tau=\infty$, then $A_*=1$ and both ansatz functions equal
$U_\rho^\eps$. Lemma \ref{l:5} gives
$u^\eps\ge1-C\delta^\theta$ on $\partial B(0,1+\delta)$, while
$u^\eps=1\ge1-C\delta^\theta$ $d$-quasieverywhere on $\Gamma$. The weak
comparison principle from Lemma \ref{lem:variational}, applied in
$B(0,1+\delta)\setminus\Gamma$, yields the same lower bound
$d$-quasieverywhere there and hence, by continuity off $\Gamma$, pointwise
there. Since $u^\eps\le1$ and $|1-U_\rho^\eps|\le C\delta$ in
$B(0,1+\delta)$, this completes the endpoint case.

Assume henceforth that $\tau<\infty$.  Replacing $\theta$ by
$\min\{\theta,1\}$ if necessary, let $w$ be the function from Theorem
\ref{neckthm}, with a fixed $\tau_1>\tau$ when $\tau>0$ and any fixed
$\tau_1>0$ when $\tau=0$.  Hypothesis $  {\bf  H_3}$ ensures that the required
smallness condition on $\alpha$ holds for all sufficiently small $\eps$.
The theorem gives $w\le C(d)\tau_1\delta$ on
\begin{eqnarray}
\Omega_{1/10}=B(0,1+\delta)\setminus
\bigcup_{s\in S}\bar B(s,\eps/10).
\end{eqnarray}
On the boundary of $\Omega_\alpha$, we have
$u^\eps-A_*-C\delta^\theta\le w$: this follows from Lemma \ref{l:5} on
$\partial B(0,1+\delta)$ and from $0\le u^\eps\le1=w$ on the inner spheres.
Comparison therefore gives
$u^\eps\le A_*+C\delta^\theta+C(d)\tau_1\delta$ on
$\partial B(s,\eps/10)$.  Since $\theta\le1$ and $\tau_1$ is fixed, the
last term can be absorbed into the constant.

For the lower bound, put $k_\delta=A_*-C\delta^\theta$. Lemma \ref{l:5}
gives $u^\eps\ge k_\delta$ on $\partial B(0,1+\delta)$, while
$u^\eps=1\ge k_\delta$ $d$-quasieverywhere on $\Gamma$. The weak comparison
principle from Lemma \ref{lem:variational}, applied in
$B(0,1+\delta)\setminus\Gamma$, gives $u^\eps\ge k_\delta$
$d$-quasieverywhere there. Continuity away from $\Gamma$ makes this estimate
pointwise on the outer small spheres. Thus, for every $s\in S$,
\begin{eqnarray}\label{eq:outer-small-ball-bound}
A_*-C\delta^\theta\le u^\eps\le A_*+C\delta^\theta
\quad\hbox{on }\partial B(s,\eps/10).
\end{eqnarray}
On $\Omega_{1/10}$ the bump terms in
$\widehat u^\eps_{A_*}$ vanish, and
$|A_*U_\rho^\eps-A_*|\le C\delta$. Consequently,
\begin{eqnarray}\label{so4}
|u^\eps-\widehat u^\eps_{A_*}|\le C\delta^\theta
\quad\hbox{in }\Omega_{1/10}.
\end{eqnarray}

Finally fix $s\in S$ and put
$D_s=B(s,\eps/10)\setminus(s+\alpha\eps K_s)$. If
$x\in B(s,\eps/10)$, then $|x|<1+\eps$, so $U_\rho^\eps(x)=1$.
Moreover, the balls $B(t,\eps/10)$ are pairwise disjoint; hence all bump
terms indexed by $t\ne s$ vanish at $x$. Therefore, in $D_s$,
\begin{eqnarray}
\widehat u^\eps_{A_*}(x)=A_*+(1-A_*)V^s_{1/(10\alpha)}
\left(\frac{x-s}{\alpha\eps}\right).
\end{eqnarray}
This function is $d$-harmonic, equals $A_*$ on the outer sphere, and equals
$1$ $d$-quasieverywhere on the inner compact set.  The same inner boundary
value holds for $u^\eps$, while \eqref{eq:outer-small-ball-bound} controls
the outer boundary.  Weak comparison, in the form stated in Lemma
\ref{lem:variational}, therefore yields
\begin{eqnarray}
|u^\eps-\widehat u^\eps_{A_*}|\le C\delta^\theta\quad\hbox{in }D_s,
\end{eqnarray}
uniformly in $s$.  Letting $\eps\to0$ and then $\delta\to0$ proves the
$L^\infty$ convergence to $\widehat u^\eps_{A_*}$.  Lemma
\ref{lem:ansatz-comparison} transfers both the Sobolev and uniform
convergences to the explicit   ansatz $u^\eps_{A_*}$.
\end{proof}

\noindent{\bf Acknowledgments.} We thank Fedor Nazarov for valuable discussions. The work of P. V. Gordon was supported in part by  US--Israel BSF grant 2024033. The research of Y. Peres was supported by National Natural Science Foundation of China grant RFIS-W2531011.


\begin{bibdiv}
\begin{biblist}
\bib{Evanspde}{book}{
   author={Evans, Lawrence C.},
   title={Partial differential equations},
   series={Graduate Studies in Mathematics},
   volume={19},
   publisher={American Mathematical Society, Providence, RI},
   date={1998},
   pages={xviii+662},
   isbn={0-8218-0772-2},
   doi={10.1090/gsm/019},
}
\bib{GNP}{article}{
  author={Gordon, Peter V.},
  author={Nazarov, Fedor},
  author={Peres, Yuval},
  title={A basic homogenization problem for the $p$-Laplacian in
    $\mathbb{R}^d$ perforated along a sphere: $L^\infty$ estimates},
  journal={Potential Analysis},
  volume={61},
  number={4},
  date={2024},
  pages={701--729},
  doi={10.1007/s11118-024-10126-8},
}


	

\bib{Boas}{article}{
  author={Boas, Ralph P., Jr.},
  title={Some uniformly convex spaces},
  journal={Bulletin of the American Mathematical Society},
  volume={46},
  number={4},
  date={1940},
  pages={304--311},
  doi={10.1090/S0002-9904-1940-07207-6},
}

\bib{Clarkson}{article}{
  author={Clarkson, James A.},
  title={Uniformly convex spaces},
  journal={Transactions of the American Mathematical Society},
  volume={40},
  number={3},
  date={1936},
  pages={396--414},
  doi={10.1090/S0002-9947-1936-1501880-4},
}

\bib{CM97}{article}{
  author={Cioranescu, Doina},
  author={Murat, Fran\c{c}ois},
  title={A strange term coming from nowhere},
  book={
    title={Topics in the Mathematical Modelling of Composite Materials},
    editor={Cherkaev, Andrej},
    editor={Kohn, Robert},
    series={Progress in Nonlinear Differential Equations and Their Applications},
    volume={31},
    publisher={Birkh\"auser Boston},
    address={Boston, MA},
    date={1997},
  },
  pages={45--93},
  doi={10.1007/978-1-4612-2032-$9_4$},
}

\bib{KS14}{article}{
  author={Karakhanyan, Aram L.},
  author={Str\"omqvist, Martin H.},
  title={Application of uniform distribution to homogenization of a thin
    obstacle problem with $p$-Laplacian},
  journal={Communications in Partial Differential Equations},
  volume={39},
  number={10},
  date={2014},
  pages={1870--1897},
  doi={10.1080/03605302.2014.895013},
}

\bib{KS16}{article}{
  author={Karakhanyan, Aram L.},
  author={Str\"omqvist, Martin},
  title={Estimates for capacity and discrepancy of convex surfaces in
    sieve-like domains with an application to homogenization},
  journal={Calculus of Variations and Partial Differential Equations},
  volume={55},
  number={6},
  date={2016},
  pages={Article no. 138, 14 pp.},
  doi={10.1007/s00526-016-1088-2},
}

\bib{GPPS}{article}{
  author={G\'{o}mez, D.},
  author={P\'{e}rez, E.},
  author={Podolskii, A. V.},
  author={Shaposhnikova, T. A.},
  title={Homogenization of variational inequalities for the $p$-Laplace
    operator in perforated media along manifolds},
  journal={Applied Mathematics \& Optimization},
  volume={79},
  number={3},
  date={2019},
  pages={695--713},
  doi={10.1007/s00245-017-9453-x},
}

\bib{PS15}{article}{
  author={Podol'skii, A. V.},
  author={Shaposhnikova, T. A.},
  title={Homogenization for the $p$-Laplacian in an $n$-dimensional domain
    perforated by very thin cavities with a nonlinear boundary condition on
    their boundary in the case $p=n$},
  journal={Doklady Mathematics},
  volume={92},
  number={1},
  date={2015},
  pages={464--470},
  doi={10.1134/S1064562415040201},
}

\bib{PS19}{article}{
  author={Podolskiy, A. V.},
  author={Shaposhnikova, T. A.},
  title={Homogenization of a boundary value problem for the $n$-Laplace
    operator on an $n$-dimensional domain with rapidly alternating boundary
    condition type: The critical case},
  journal={Differential Equations},
  volume={55},
  number={4},
  date={2019},
  pages={523--531},
  doi={10.1134/S0012266119040104},
}

\bib{MKhomo}{book}{
  author={Marchenko, Vladimir A.},
  author={Khruslov, Evgueni Ya.},
  title={Homogenization of Partial Differential Equations},
  series={Progress in Mathematical Physics},
  volume={46},
  publisher={Birkh\"auser Boston},
  address={Boston, MA},
  date={2006},
  pages={xiv+402},
  doi={10.1007/978-0-8176-4468-0},
}

\bib{Lindq}{book}{
  author={Lindqvist, Peter},
  title={Notes on the Stationary $p$-Laplace Equation},
  series={SpringerBriefs in Mathematics},
  publisher={Springer},
  address={Cham},
  date={2019},
  pages={xi+104},
  doi={10.1007/978-3-030-14501-9},
}

\bib{Mazya}{book}{
  author={Maz'ya, Vladimir},
  title={Sobolev Spaces},
  subtitle={With Applications to Elliptic Partial Differential Equations},
  edition={2},
  series={Grundlehren der mathematischen Wissenschaften},
  volume={342},
  publisher={Springer},
  address={Heidelberg},
  date={2011},
  pages={xxviii+866},
  doi={10.1007/978-3-642-15564-2},
}

\bib{pLrev}{article}{
  author={Mingione, Giuseppe},
  author={Palatucci, Giampiero},
  title={Developments and perspectives in nonlinear potential theory},
  journal={Nonlinear Analysis},
  volume={194},
  date={2020},
  pages={Article no. 111452, 17 pp.},
  doi={10.1016/j.na.2019.02.006},
}


\bib{EG92}{book}{
   author={Evans, Lawrence C.},
   author={Gariepy, Ronald F.},
   title={Measure theory and fine properties of functions},
   series={Studies in Advanced Mathematics},
   publisher={CRC Press, Boca Raton, FL},
   date={1992},
}


		
\bib{hkm}{book}{
   author={Heinonen, Juha},
   author={Kilpel\"{a}inen, Tero},
   author={Martio, Olli},
   title={Nonlinear potential theory of degenerate elliptic equations},
   publisher={Dover Publications Inc., Mineola, NY},
   date={2006},
}

\end{biblist}
\end{bibdiv}
\end{document}